\newtheorem{theorem}{Theorem}
\newtheorem{definition}[theorem]{Definition}
\newtheorem*{definition*}{Definition}
\newtheorem{lemma}[theorem]{Lemma}
\newtheorem{proposition}[theorem]{Proposition}
\newtheorem{corollary}[theorem]{Corollary}
\newtheorem{conjecture}[theorem]{Conjecture}
\newtheorem{assum}[theorem]{Assumption}
\newtheorem{example}[theorem]{Example}
\newtheorem{remark}[theorem]{Remark}
\newtheorem*{remark*}{Remark}
\def\la{\langle}
\def\ra{\rangle}
\newcounter{notas}%[page]   %2nd argument reinitializes note numbers on each page.
\newcommand{\epsh}[2]
         {\begin{array}{c} \hspace{-1.3mm}
        \raisebox{-4pt}{\epsfig{figure=#1,height=#2}}
        \hspace{-1.9mm}\end{array}}
\newcommand{\qbin}[3]{\left[\begin{array}{c}
#1 \\
#2 \end{array}\right]_{#3}}
\def\vol{\mathrm{Vol}}
\def\c{c}
\def\D{{\mathbb{D}}}
\def\QM{{\mathbb{V}}}
\def\V{V}
\def\E{E}
\def\Z{\mathbb{Z}}
\def\ev{{\rm ev}}
\def\Ga{\Gamma}
\def\ga{\gamma}
\def\C{\mathbb{C}}
\def\R{\mathbb{R}}
\def\D{\mathbb{D}}
\def\P{\mathcal{P}}
\def\Ptr{P^{\mathrm{trunc}}}
\def\H{{\mathbb{H}}}
\def\mr{{\mathbb{R}}}
\def\mz{\mathbb{Z}}
\def\mn{\mathbb{N}}
\def\nns{\negthickspace}
\def\Hol{\mathcal Hol}
\def\I{\mathbf i}
\newcommand{\brkauf}[2]{\langle #1, #2\rangle}
\newcommand{\brunit}[2]{\langle #1, #2\rangle ^{{\rm U}}}
\newcommand{\smallunknot}[1]{\epsh{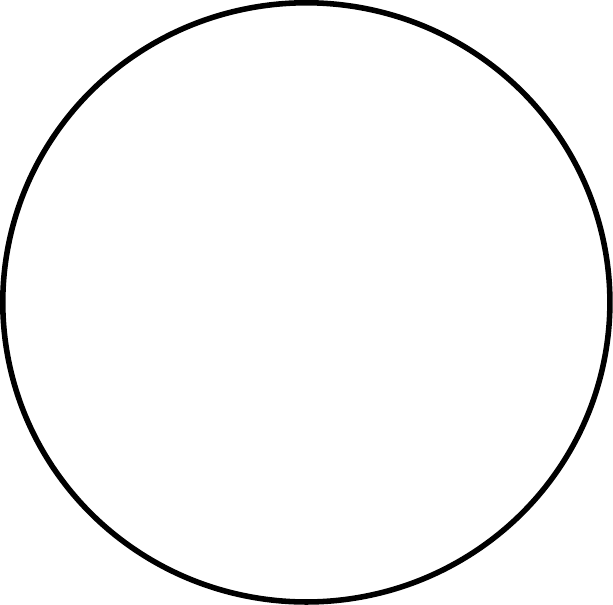}{#1}}
\newcommand{\smalltetra}[1]{\epsh{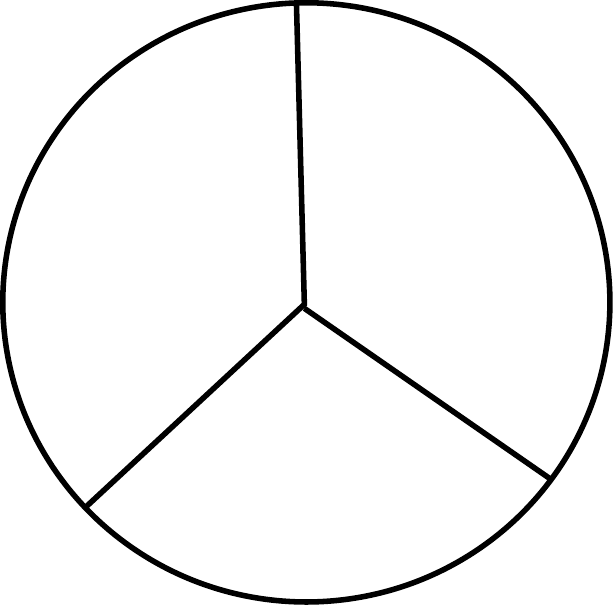}{#1}}
\newcommand{\smallcrossedtetra}[1]{\epsh{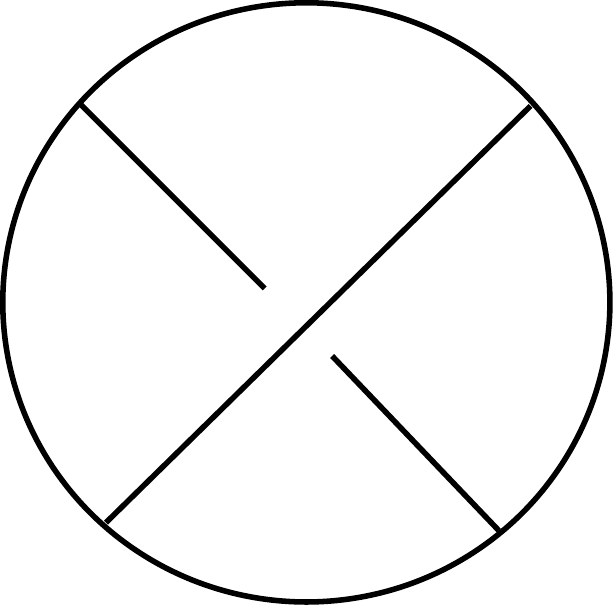}{#1}}
\title{On the volume conjecture for polyhedra}
\author{Francesco Costantino, Francois Gu\'eritaud and Roland van der Veen}
\begin{document}

\begin{abstract}
We formulate a generalization of the volume conjecture for planar graphs. Denoting by $\la \Ga, c \ra^{\mathrm{U}}$ the Kauffman bracket of the graph $\Ga$ whose edges are decorated by real ``colors'' $c$,  the conjecture states that, under suitable conditions, certain evaluations of $\la \Ga,\lfloor kc \rfloor \ra^{\mathrm{U}}$ grow exponentially as $k\to\infty$ and the growth rate is the volume of a truncated hyperbolic hyperideal polyhedron whose one-skeleton is $\Gamma$ (up to a local modification around all the vertices) and with dihedral angles given by $c$.
We provide evidence for it, by deriving a system of recursions for the Kauffman brackets of planar graphs, generalizing the Gordon-Schulten recursion for the quantum $6j$-symbols. Assuming that $\la \Ga,\lfloor kc \rfloor \ra^{\mathrm{U}}$ does grow exponentially these recursions provide differential equations for the growth rate, which are indeed satisfied by the volume (the Schl\"afli equation); moreover, any small perturbation of the volume function that is still a solution to these equations, is a perturbation by an additive constant. In the appendix we also provide a proof outlined elsewhere of the conjecture for an infinite family of planar graphs including the tetrahedra. 
\end{abstract}

\maketitle

\numberwithin{theorem}{section}

\tableofcontents

\section{Introduction and basic definitions} \label{intro}
\subsection{The Volume Conjecture}
The Volume Conjecture, initially formulated by R. Kashaev \cite{K} and then recast in terms of evaluations of colored Jones polynomials by Murakami and Murakami \cite{MM}, states that if $k\subset S^3$ is a hyperbolic knot, and if $J_n(A)\in \mz(A^{\pm1})$ is the ${n}^{\mathrm{th}}$ colored Jones polynomial of the knot, normalized so that its value on the unknot is $1$, then the following holds:
$$\lim_{n\to \infty} \frac{2\pi}{n} \left | \log \left ( J_n \left ( \exp{\frac{\I\pi}{2n}} \right ) \right ) \right |=\vol(S^3\setminus k).$$
Here $\I\in\mathbb{C}$ is a square root of $-1$, chosen once and for all.

The conjecture in the above form has been formally checked for the Figure Eight knot \cite{MM}, for torus knots \cite{KT} and their Whitehead doubles \cite{Zh}. It was also generalized to links and verified for the Borromean link \cite{MM3} and later for infinite families of links and knotted graphs in \cite{Va,Va2}. Moreover, there is experimental evidence of its validity for the knots $6_3$, $8_9$ and $8_{20}$ \cite{MMOTY}. In \cite{CoGen}, an extension of the conjecture was proposed to include the case of links in connected sums of $S^2\times S^1$; this extension was proved for the infinite family of ``fundamental shadow links'' contained in connected sums of copies of $S^2\times S^1$. In \cite{CoBP}, a further extension of the conjecture was proposed for links in arbitrary manifolds.
Recently another extension was proposed in \cite{Munew}.

One of the main difficulties of the conjecture is that in general it is difficult to understand in sufficient detail the rigid geometric structure of the knot complement. To avoid this problem, the first author formulated \cite{CoNY} a version of the conjecture for planar trivalent graphs and outlined a proof of the conjecture for an infinite class of cases. 
The intersection of the family of planar graphs with that of knots consists of the unknot, so the new conjecture is actually a generalization in a new direction, in which the topology of the complement of the graphs is easy (it is always a handlebody) and the geometry is rich (polyhedra can deform) without being inaccessible. Indeed we show that under suitable combinatorial conditions the asymptotical behavior of the invariants we consider is related to the hyperbolic volume of the polyhedra whose one-skeleton is described by the graphs. 
Since a graph has more than one edge, colored Jones polynomials are no longer sufficient to describe the full deformation space of these polyhedra; so our conjecture deals with the natural generalization of these invariants to graphs, namely the Kauffman brackets (also known as quantum spin networks).

\subsection{Framework, notation, and results}\label{sub:conjecture}

\subsubsection{On hyperideal hyperbolic polyhedra}\label{sub:introgeom}
Let $\mathbb{K}^3\subset \mathbb{R}^3$ be the open ball representing the hyperbolic $3$-space via the Klein model.
Let $\P\subset \mathbb{R}^3$ be a finite convex Euclidean polyhedron such that each vertex 
$w_i$ of $\P$ lies in $\mathbb{R}^3\setminus \mathbb{K}^3$ and each edge of $\P$ intersects $\partial \mathbb{K}^3$. Let $C_i$ be the cone from $w_i$ tangent to $\partial\mathbb{K}^3$ and $\pi_i$ be the half-space not 
containing $w_i$ such that $\partial\pi_i\cap \partial \mathbb{K}^3=C_i\cap \partial \mathbb{K}^3$. 
In all the paper we will assume that  the $1$-skeleton of $\P$, denoted ${\P}^{(1)}$, is a trivalent graph.
\begin{definition*}[Truncated hyperbolic hyperideal polyhedra]
Given $\P$ as above the  associated \emph{truncated hyperideal hyperbolic polyhedron} is $\Ptr:={\mathcal P}\cap \bigcap_i \pi_i$ (see Figure \ref{fig:exampleP}).
We will say that a planar graph $\Ga\subset S^2$ is the $1$-skeleton of $\P$ (or, with an abuse of notation, of $\Ptr$), if there is a homeomorphism mapping $(S^2,\Ga)$ into $({\P}^{(2)},{\P}^{(1)})$. 
\end{definition*} 
\begin{figure}
$\epsh{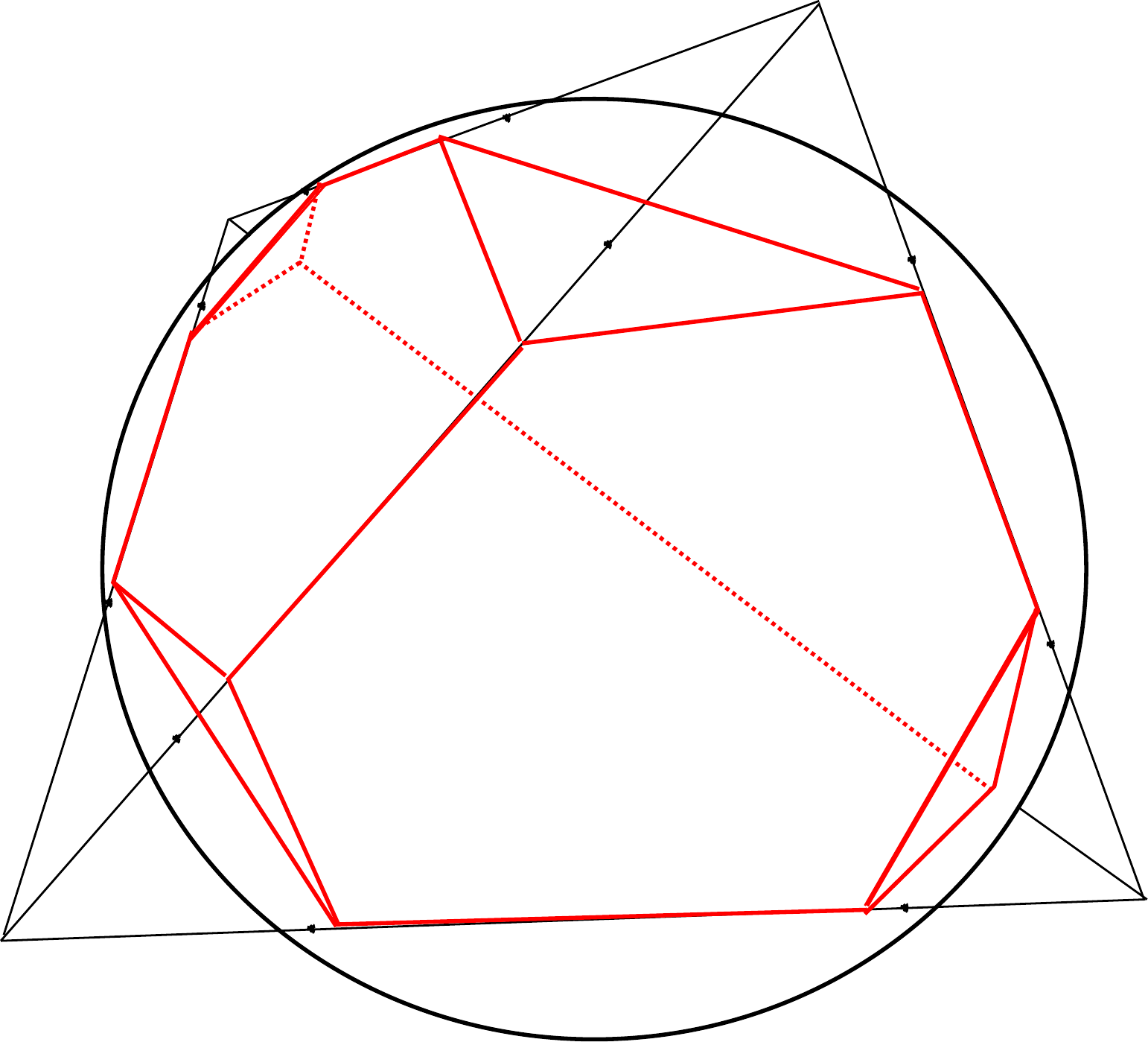}{20ex}
$
\caption{An example of $\Ptr$ (in this case, in red, a truncated tetrahedron): remark that $\Ptr$ is compact unless one of the edges of $\P$ is tangent to $\partial  \mathbb{K}^3$.}\label{fig:exampleP}
\end{figure}

(Observe that $(\Ptr)^{(1)}$ is homeomorphic to the graph obtained by replacing each vertex of  ${\P}^{(1)}$ by a triangle as follows: \raisebox{-0.1cm}{\includegraphics[width=1.0cm]{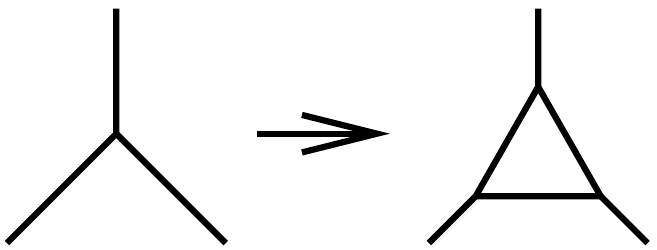}}.)

A geometric structure on a truncated hyperideal polyhedron $\Ptr$ is uniquely identified by the exterior dihedral angles at the edges of $\P$ hence, if $\Gamma=(\Ptr)^{(1)}$, by a map $\gamma:E(\Gamma)\to (0,\pi]$, where $E(\Ga)$ is the set of edges of $\Ga$. We denote $A(\P)$ the set of all possible such angle structures; by a theorem of Bao and Bonahon \cite{BaBo} (see Section \ref{sec:polyhedra} for more details), $A(\P)$ is a subset of $(0,\pi]^E$ cut out by a finite list of linear inequalities. (We are including the dihedral angle $\pi$ which corresponds to the case when an edge of $\P$ is tangent to $\partial \mathbb{K}^3$, in which case the corresponding edge in $\Ptr$ reduces to an ideal point.) To specify a geometric structure on $\Ptr$ we will write $\Ptr(\gamma)$ for some $\gamma \in A(\P)$.  

If each edge of ${\P}$ intersects $\partial \mathbb{K}^3$ in $2$ points (equivalently if $\gamma(e)\in (0,\pi)$ for all edges) it is easy to check that $\Ptr(\gamma)$ is a compact hyperbolic polyhedron whose edges are of two kinds: those contained in the \emph{truncation faces} (i.e.\ in $\cup_i (\partial \pi_i)\cap \P$) and the remaining ones which are contained in the edges of $\P$. The latter edges of $\Ptr(\gamma)$ have well-defined finite lenghts $\ell_i$ and exterior dihedral angles given by~$\gamma_i$. 
Edges that are reduced to ideal points (with $\gamma_i=\pi$) have length $\ell_i=0$.
We will denote by $\vol:A(P)\to \mr$ the function that associates to each set of dihedral exterior angles $\gamma\in A(\P)$ the hyperbolic volume of the corresponding $\Ptr$. It is well known that $\vol:A(\P)\to \mathbb{R}$ is smooth and satisfies a differential equation called the \emph{Schl\"afli formula}, stating that 
$\mathrm{d}\,  \vol=\frac{1}{2}\sum_{i} \ell_i \mathrm{d}\gamma_i$. 
In particular, 
\begin{equation} \label{eq:schlaefli} \ell_i=2\frac{\partial \vol(\gamma)}{\partial \gamma_i}~. \end{equation}

Here is a paraphrase of the result we prove in Section \ref{sec:faceequations}:
\begin{theorem} \label{prop:faceequation}
\begin{enumerate}
Let $\Ptr$ be a truncated hyperideal polyhedron. 
\item[a.] For each face of $\Ptr$ there is an $\mathrm{SL}_2(\mathbb{R})$-valued equation satisfied by $\vol:A(\P)\to \mr$ of the form:\begin{equation} \label{eq:faceequation}
\prod_i \begin{pmatrix}
a_i(\gamma) & b_i(\gamma) \\
c_i(\gamma) & d_i(\gamma)
\end{pmatrix} \begin{pmatrix}
\cosh(\frac{\ell_i}{2}) & \sinh(\frac{\ell_i}{2}) \\
\sinh(\frac{\ell_i}{2}) & \cosh(\frac{\ell_i}{2}) 
\end{pmatrix}=-\mathrm{Id}
\end{equation}
where $i$ runs over all the edges of $\P$ contained in the face,  $\ell_i=2\frac{\partial \vol(\gamma)}{\partial \gamma_i}$ are the edge lengths of the interior edges of $\Ptr$ and $a_i,b_i,c_i,d_i$ are explicit smooth functions of the dihedral angles of $\mathcal P$ adjacent to the face. 

\item[b.] Moreover, if $f:U\to \mr$ is a function of class $C^1$ defined on an open connected subset $U$ of $A(\P)$, that is close enough to $\vol$ in the $C^1$ sense and satisfies all the $\mathrm{SL}_2(\R)$-valued equations \eqref{eq:faceequation} associated to the faces of $\P$, then $f-\vol$ is a constant.
\end{enumerate}
\end{theorem}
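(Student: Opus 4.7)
The plan is in two parts, mirroring the statement.

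\textbf{Part (a).} The key geometric fact is that each face $F$ of $\Ptr$ is a compact \emph{right-angled} hyperbolic polygon. Indeed, by construction the truncating plane $\partial\pi_v$ at a hyperideal vertex $v$ of $\P$ is orthogonal to every face of $\P$ incident to $v$, so every corner of $F$ is a right angle. If the corresponding face of $\P$ has $n$ edges (hence $n$ vertices), then $F$ is a right-angled $2n$-gon whose sides alternate between ``interior'' edges of lengths $\ell_i$ (one per edge of $\P$ in the face) and ``truncation'' edges of lengths $t_j$ (one per vertex of $\P$ in the face). Since the truncation face at each trivalent hyperideal vertex $v$ is a hyperbolic triangle whose interior angles equal the $\gamma$-values of the three edges at $v$, standard hyperbolic trigonometry expresses each $t_j$ explicitly as a smooth function of the $\gamma$'s at $v_j$.

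Now the holonomy of any closed loop in $\H^2$ is trivial in $\mathrm{Isom}^+(\H^2) = \mathrm{PSL}_2(\R)$, and so lifts to $\pm\mathrm{Id}$ in $\mathrm{SL}_2(\R)$. Traversing $\partial F$, this holonomy factors into $2n$ pieces: $n$ translations of length $\ell_i$ along interior edges, which after choosing the axis suitably take the form $\left(\begin{smallmatrix}\cosh(\ell_i/2) & \sinh(\ell_i/2) \\ \sinh(\ell_i/2) & \cosh(\ell_i/2)\end{smallmatrix}\right)$, and $n$ composite factors, each combining a $\pi/2$-rotation at a corner, a translation by $t_j$, and a $\pi/2$-rotation at the next corner; these composites yield the matrices $\left(\begin{smallmatrix}a_i(\gamma) & b_i(\gamma) \\ c_i(\gamma) & d_i(\gamma)\end{smallmatrix}\right)$, depending smoothly on $\gamma$ through the $t_j$'s. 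A direct sign computation (tracking the corner rotations) shows the right-hand side is $-\mathrm{Id}$. Via Schl\"afli's identity \eqref{eq:schlaefli}, this becomes the differential identity \eqref{eq:faceequation} for $\vol$.

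\textbf{Part (b).} Set $\tilde\ell_i := 2\partial f/\partial\gamma_i$. The hypothesis on $f$ is that $(\tilde\ell_i)_i$ satisfies the same face equations as $(\ell_i)_i$ at every $\gamma\in U$. It suffices to show the system of face equations \emph{locally uniquely determines} the edge-length tuple: then $\tilde\ell_i = \ell_i$ for each $i$, so $\nabla(f-\vol)\equiv 0$, and connectedness of $U$ forces $f-\vol$ to be constant. To prove local uniqueness I would linearize at the true solution, differentiating the left-hand side of \eqref{eq:faceequation} in each $\ell_i$ (with $\gamma$ fixed) and showing that the resulting linear map on tuples of length-variations is injective. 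This injectivity is a form of infinitesimal rigidity for right-angled polygonal configurations compatible with fixed dihedral data, which should reduce to the Bao--Bonahon rigidity theorem \cite{BaBo} for truncated hyperideal polyhedra.

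The main obstacle I anticipate is precisely the injectivity step in part (b). A single face contributes only three independent scalar equations, while the unknown is the full tuple of edge-length variations; the system becomes effective only after exploiting consistency between faces sharing a common edge or vertex. Demonstrating injectivity therefore requires a Cauchy-type global argument, which I expect to be the technical heart of the proof.
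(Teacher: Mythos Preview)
Your Part (a) is essentially the paper's argument: the face of $\Ptr$ is a right-angled $2n$-gon, the holonomy around its boundary is $-\mathrm{Id}$ in $\mathrm{SL}_2(\R)$, and the truncation-edge lengths are determined by the second law of cosines in the truncation triangles. (One small slip: the \emph{exterior} angles of the truncation triangle equal the $\gamma$-values, not the interior angles; this does not affect the structure of the argument.)

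For Part (b) your destination is right---Bao--Bonahon rigidity---but your route via linearization is more circuitous than necessary, and you correctly flag that you do not know how to complete it. The paper bypasses linearization entirely. The point is that the face equations are not merely an algebraic system to be analyzed for local injectivity: they are precisely the \emph{closing conditions for a developing map}. Given the putative lengths $\tilde\ell_i=2\,\partial f/\partial\gamma_i$, one builds a local $(\H^3,\mathrm{Isom}(\H^3))$-structure on a neighborhood of the $1$-skeleton of $\Ptr$ by laying down edges one after another with the prescribed lengths and dihedral angles. The face equations say exactly that this developing map closes up around each non-truncation face; the truncation triangles close up automatically since their side lengths depend only on $\gamma$. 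Because $f$ is $C^1$-close to $\vol$, the $\tilde\ell_i$ are close to the true $\ell_i$, so each developed face is still a \emph{convex} right-angled polygon and the resulting $2$-skeleton bounds a genuine convex truncated hyperideal polyhedron $\Ptr_f$ with the same exterior dihedral angles $\gamma$ as $\Ptr$. Now Bao--Bonahon applies directly (not infinitesimally): $\Ptr_f$ and $\Ptr$ are isometric, hence $\tilde\ell_i=\ell_i$ for all $i$, hence $\nabla f=\nabla\vol$ on $U$.

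So the ``Cauchy-type global argument'' you anticipate is unnecessary; the global step \emph{is} Bao--Bonahon, applied to an actual polyhedron rather than to a linearized system. The $C^1$-closeness hypothesis is used exactly once, to guarantee convexity of the developed object so that rigidity applies.
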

We stress that the definition of these equations is purely geometric and does not depend on any quantum object. The equations as in \eqref{eq:faceequation} can be seen via \eqref{eq:schlaefli} as polynomial equations relating the exponentiated dihedral angles and exponentiated edge lengths of $\Ptr$, or alternatively, as partial differential equations satisfied by the volume function.
 
\subsubsection{Invariants of framed graphs, statement of the conjecture and the main result}
We turn now to the statement of the volume conjecture for polyhedra. 
\begin{definition*}[KTG] \label{def:ktg}
A $Knotted\ Trivalent\ Graph$ (KTG) is a finite trivalent graph $\Ga\subset S^3$ equipped with a ``framing'', i.e.
an oriented surface retracting to $\Gamma$ (seen up to isotopy fixing $\Gamma$).
We denote by $\V$ the set of \emph{vertices} of $\Ga$ and $\E$ the set of \emph{edges}. 
\end{definition*}

\begin{definition}[Admissible coloring]\label{def:admcol}
A \emph{coloring} of $\Ga$ is a map $\c:E\to \frac{\mathbb{N}}{2}$ (whose values are called \emph{colors}); it is \emph{admissible} if for all $v\in V$, if $e_i,e_j,e_k\in E$ are the edges touching $v$, the following conditions are satisfied:
\begin{enumerate}
\item[a.] $\c(e_i)+\c(e_j)\geq \c(e_k),\ \c(e_j)+\c(e_k)\geq \c(e_i),\ \c(e_k)+\c(e_i)\geq \c(e_j)$
\item[b.] $\c(e_i)+\c(e_j)+\c(e_k)\in \mathbb{N}.$
\end{enumerate}
\end{definition}

As we shall recall in Section \ref{sec:shadowstates}, given a KTG $\Ga$ equipped with a coloring $\c$ there is an associated invariant of isotopy of $(\Ga,\c)$, called the \emph{unitary Kauffman bracket} (or quantum spin network), denoted $\brunit{\Ga}{\c}$, valued in the space 
$\Hol$ of functions which are holomorphic on the interior of the unit disc $\D\subset \C$ and whose square is the restriction of a meromorphic function on $\C$ (see Remark \ref{rem:squareok}).
\begin{definition*}[Real coloring]
Given a sequence of colorings $(\c_n)_{n\in \mathbb{N}}$ on $\Gamma$ we define the \emph{real coloring}  $\overline{\ga}$ as the map $\overline{\ga}:E\to \mathbb{R}$ given by $\overline{\ga}(e)=\lim_{n\to \infty} \frac{\c_n(e)}{n}$. We will always tacitly assume that these limits exist for all $e\in E$, for all the sequences we shall consider. 
We will also let $\gamma:E\to \mathbb{R}$ be defined in terms of $\overline{\gamma}$ as $$\gamma(e)=2\pi (1-\overline{\ga}(e)),\: \forall e\in E.$$
\end{definition*}
\begin{remark*}
Clearly $\overline{\ga}$ satisfies weak triangular inequalities as in Definition~\ref{def:admcol}--{\textrm a}. Conversely, given a map $\overline{\ga}:E\to \mathbb{R}$ satisfying strict triangular inequalities around vertices, one can produce a sequence $(c_n({\ga}))_{n\in \mathbb{N}}$ of admissible colorings of $\Ga$ whose limit is $\overline{\ga}$ by setting e.g.\ $(\c_n({\ga}))(e):=\lfloor \overline{\ga}(e) n\rfloor$. Here and below we take $\lfloor x\rfloor$ to be the greatest integer less than or equal to $x$.
\end{remark*}

\begin{definition*}[Evaluation]
Let $f$ be the restriction to $\D$ of a meromorphic function on $\C$; we denote by $\ev_n(f)$ the first nonzero coefficient of a Laurent series expansion of $f$ around $A=\exp(\frac{\pi \I}{2n})$. 

More generally if $f$ is a holomorphic function on $\D$ such that $f^2$ extends meromorphically to $\mathbb{C}$, then we let $\ev_n(f):=\pm\sqrt{\ev_n(f^2)}$ (in what follows only the modulus of $\ev_n$ will be relevant to our computations).
\end{definition*}

\begin{conjecture}[Volume Conjecture for polyhedra]\label{conj:vc} 
Let $\Ga=\P^{(1)}$ and let $\ga:E\to (0,\pi]$ be the exterior dihedral angles of $\P$. Let also $\overline{\ga}:=1-\frac{\ga}{2\pi}$ and $(\c_n(\ga))_{n\in \mathbb{N}}$ be the sequence of integer colorings defined by $\c_n({\ga}) = \lfloor n\overline{\ga}\rfloor$. Then the following limit holds: 
\[
\lim_{n\to\infty}\frac{\pi}{n}\log \left | \ev_n\brunit{ \Ga}{\c_n({\ga})} \right | = \vol (\Ptr(\ga)).
\]
\label{volumeconjecture}
\end{conjecture}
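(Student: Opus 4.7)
The plan is to use Theorem~\ref{prop:faceequation}(b) as a rigidity backbone: derive a quantum analogue of the face equations~\eqref{eq:faceequation} from the combinatorics of $\brunit{\Ga}{\c}$, pass it through a WKB-type limit to recover the classical equations for any candidate growth rate $F(\ga)$, and then conclude by the rigidity part that $F-\vol$ is constant, the constant being fixed from a base case.

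The first step is to establish, for each face $f$ of $\P$, a recursion on $\brunit{\Ga}{\c}$ as the coloring varies on the edges bordering $f$. The model is the Gordon--Schulten recursion for the quantum $6j$-symbol, which is exactly such a relation when $\P$ is a tetrahedron. Skein manipulations around $f$ (inserting resolutions of the identity along a loop encircling $f$) should produce a matrix identity: a product of $2\times 2$ matrices indexed by the edges of $f$, whose entries are explicit rational fractions of Kauffman brackets of theta graphs and small tetrahedra, applied to a vector of two consecutive values of $\brunit{\Ga}{\c}$, and yielding $-\mathrm{Id}$. This is the quantum shadow of~\eqref{eq:faceequation}.

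Next, assume that on some open $U\subset A(\P)$ the limit
\[
F(\ga):=\lim_{n\to\infty}\frac{\pi}{n}\log\left|\ev_n\brunit{\Ga}{\c_n(\ga)}\right|
\]
exists and is $C^1$ in $\ga$. Since shifting $\c_n(\ga)(e)$ by $\pm 1$ corresponds to shifting $\overline{\ga}(e)$ by $\pm 1/n$, ratios of nearby Kauffman brackets exponentiate to $\exp(\pm 2\pi\,\partial F/\partial\ga_e)$ in the limit. Substituting the known $n\to\infty$ asymptotics of the theta and tetrahedral matrix entries into the quantum face recursion collapses it to precisely~\eqref{eq:faceequation}, with the ``effective lengths'' $\ell_i=2\,\partial F/\partial\ga_i$. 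Theorem~\ref{prop:faceequation}(b) then forces $F-\vol$ to be locally constant on $U$; the constant is pinned down by specializing to a base case treated in the appendix (the tetrahedron, or the infinite family of planar graphs for which the conjecture is established there) and propagating by connectedness of $A(\P)$.

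The main obstacle is the very hypothesis of the second step: proving that $\ev_n\brunit{\Ga}{\c_n(\ga)}$ genuinely grows exponentially with a rate that is regular in $\ga$. Without such an analytic input, the WKB manipulation only yields a differential equation that any candidate growth rate must satisfy---which is what this paper actually proves---rather than the existence of the limit itself. A secondary difficulty is matching, term by term, the leading asymptotics of the small subdiagrams to the explicit matrix entries $a_i,b_i,c_i,d_i$ of~\eqref{eq:faceequation}; this requires a careful translation between combinatorial quantum data and hyperbolic trigonometry, and is where most of the computational work would concentrate.
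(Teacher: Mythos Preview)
This statement is a \emph{conjecture} in the paper, not a theorem: the paper does not prove it in general, offering instead (i) a direct verification for the family of Theorem~\ref{teo:easycase}, and (ii) the conditional Theorem~\ref{teo:main} that, under Assumption~\ref{assum:1}, any growth rate must satisfy the face equations~\eqref{eq:faceequation}. Your proposal is not a proof either, and you are candid about this: the outline you give is precisely the paper's own strategy for producing \emph{evidence} (circle recursions $\Rightarrow$ face equations in the WKB limit $\Rightarrow$ rigidity), and the ``main obstacle'' you name is exactly the content of Assumption~\ref{assum:1}, which the paper explicitly leaves open.

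Two further gaps deserve mention even within your conditional outline. First, Theorem~\ref{prop:faceequation}(b) does not assert that \emph{every} $C^1$ solution of the face equations differs from $\vol$ by a constant: it requires the candidate $F$ to be \emph{close to $\vol$ in the $C^1$ sense} a priori (in the proof, the right-angled $2p$-gon $F_f$ built from the partial derivatives of $F$ must close up to a \emph{convex} polygon, which is only guaranteed near $\vol$). Nothing in your argument supplies that proximity. Second, your scheme for pinning the additive constant does not work for general $\P$. The base cases in the appendix are exactly the graphs for which Conjecture~\ref{conj:vc} is already established directly, so invoking them there is redundant; for a graph such as the $1$-skeleton of a cube, there is no known point of $A(\P)$ where the value of $F$ is determined. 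Connectedness of $A(\P)$ propagates the constant across angle structures on a \emph{fixed} $\P$, but not across combinatorially distinct polyhedra, so the constant remains unfixed in the cases of interest.
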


The factor $\pi$ instead of the usual $2\pi$ in the original volume conjecture can be explained by viewing the polyhedron als
one half of the geometric decomposition of the complement of the planar graph. In the special case $\ga=\pi$ the above agrees with
the conjecture in \cite{Va2}.

The following result, announced by the first author in \cite{CoNY}, can be deduced from the results in \cite{Co} but we provide a proof in the Appendix for the sake of self-containedness:
\begin{theorem}\label{teo:easycase}
Let $\Gamma$ be a planar graph obtained from a tetrahedron by applying any finite sequence of the local moves \raisebox{-0.1cm}{\includegraphics[width=1.0cm]{vertextotriangle.pdf}}. Then $\Ga$ is the $1$-skeleton of a hyperbolic ideal polyhedron $P_\Ga$ and for each $\gamma\in A(P_\Ga)$ Conjecture \ref{conj:vc} holds true.
\end{theorem}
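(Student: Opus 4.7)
The plan is to induct on the number of local moves \raisebox{-0.1cm}{\includegraphics[width=1.0cm]{vertextotriangle.pdf}} needed to produce $\Gamma$ from a tetrahedron. The inductive construction of $\Gamma$ parallels a decomposition of the polyhedron $P_\Gamma$ into elementary (ultimately hyperideal-tetrahedral) pieces, so that at each step one can compare the increment of the hyperbolic volume with that of the Kauffman bracket. A natural framework for the quantum side is the shadow-state-sum expression for $\brunit{\Gamma}{\c}$ of Section \ref{sec:shadowstates}, which exhibits the bracket as a sum of products of $6j$-symbols.

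For the base case, when $\Gamma$ is a tetrahedron, $\brunit{\Gamma}{\c_n(\ga)}$ coincides (up to standard normalization) with the quantum $6j$-symbol, and the Volume Conjecture in this case is the main result of \cite{Co}: saddle-point analysis applied to an integral representation of the $6j$-symbol at $A=\exp(\pi\I/2n)$ identifies its exponential growth rate with the Murakami--Yano formula for the volume of the hyperideal tetrahedron with exterior dihedral angles $\ga$.

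For the inductive step, let $\Gamma'$ be obtained from $\Gamma$ by one application of the move at a vertex $v$. On the geometric side, the move enlarges $P_\Gamma$ by replacing the vertex corresponding to $v$ with a new triangular face, so that $P_{\Gamma'}^{\mathrm{trunc}}(\ga')$ differs from $P_\Gamma^{\mathrm{trunc}}(\ga)$ by the gluing of an auxiliary hyperideal piece whose volume is an explicit function of the new dihedral angles. On the quantum side, fusion/recoupling (the Biedenharn--Elliott identity) yields a local factorization
\[
\brunit{\Gamma'}{\c'} \;=\; L(\c')\cdot \brunit{\Gamma}{\c},
\]
where $L(\c')$ is built from $6j$-symbols and theta-net evaluations depending only on local data near $v$. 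Applying the base-case asymptotics to the $6j$-factors in $L(\c_n')$ --- theta-nets being subexponential --- and combining with the inductive hypothesis for $\Gamma$ then gives the statement for $\Gamma'$.

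The main obstacle is the precise matching between the saddle point of $L$ and the hyperbolic geometry of the auxiliary piece glued at $v$: one must verify that, at $A=\exp(\pi\I/2n)$, the critical point of the phase of $L$ reproduces the exterior dihedral angles and edge lengths of that piece, so that its exponential growth rate equals exactly the correct volume contribution. This is essentially the geometric content of \cite{Co} at the level of the individual $6j$-symbol, and once installed as a black box the induction propagates uniformly. A secondary technical point is uniform control of the subleading corrections in the saddle-point expansion across many moves, which reduces to checking non-degeneracy of the relevant saddles on the open subset of $A(P_\Gamma)$ under consideration.
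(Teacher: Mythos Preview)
Your inductive strategy is exactly the paper's, but you are making the inductive step harder than it is. The ``auxiliary hyperideal piece'' glued at $v$ is not some new polyhedron: it is itself a truncated hyperideal tetrahedron (the truncation face of $P_\Gamma^{\mathrm{trunc}}$ at $v$ is glued to a truncation face of a single $Tet^{\mathrm{trunc}}$), so its volume is covered by the base case. On the quantum side, you do not need Biedenharn--Elliott or any theta-nets: in the unitary normalization the triangle formula \eqref{eq:triangleformula} gives the exact identity
\[
\brunit{\Gamma'}{c'}=\brunit{Tet}{c''}\cdot\brunit{\Gamma}{c},
\]
so $L(c')$ is literally a single $6j$-symbol. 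Consequently the two ``obstacles'' you flag vanish: there is no separate saddle-point to match (the asymptotics of $L$ are precisely the base case applied to $Tet^{\mathrm{trunc}}$ with the relevant angles), and since the factorization is an equality rather than an asymptotic relation, logarithms add exactly and there is no error to control across many moves.
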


Unfortunately there are plenty of trivalent graphs which cannot be obtained as above from a tetrahedron: e.g.\ the $1$-skeleton of a cube.
The main result of this paper is to provide evidence for Conjecture \ref{conj:vc} in these other cases. 
Let $\Gamma=\P^{(1)}$ and $\ga\in A(\P)$; let also $(\c_n(\ga))_{n\in \mathbb{N}}$ be a sequence of colorings on $\Ga$ with $\lim_{n\to \infty} \frac{\c_n(\ga)}{n}=\overline{\gamma}=1-\frac{\ga}{2\pi}$. From now on we will make the following assumption:
\begin{assum}\label{assum:1}
There exists a function $f:A(\P)\to \R$ of class $C^1$ such that for each map $\delta:\E(\Ga)\to \{-\frac{1}{2},0,\frac{1}{2}\}$ satisfying the second condition of Definition \ref{def:admcol} the following limits hold:
$$\lim_{n\to \infty}\lim_{A\to \exp\left(\frac{\I\pi}{2n}\right)} \frac{\brunit{\Ga}{\c_n(\ga)+\delta}}{\brunit{\Ga}{\c_n(\ga)}}
=
\exp\left(\sum_{e_i\in \E(\Ga)} 2\delta(e_i)\frac{\partial f}{\partial \gamma_i} \right).$$
\end{assum}
The limit can be seen as a form of $C^1$ convergence of the growth rate of the evaluations to the function $f$.
Indeed, if $\ev_n\left(\brkauf{\Ga}{\c_n(\ga)}\right)\sim \exp(n f(\ga))$ for some regular function $f:A(\P)\to \R$ and if the asymptotic approximation is itself regular enough, then the above limit holds. 
(So, although exponential growth does not formally imply Assumption \ref{assum:1}, it does under suitable regularity hypotheses.)

So our main result below roughly says that if the growth of evaluations of quantum spin networks with underlying graph $\Ga$ 
is exponential and well-behaved (Assumption \ref{assum:1}), then the growth rate resembles the volume. More precisely we have:
\begin{theorem}\label{teo:main}
Let $\Gamma$, $\P$, $\ga$ and $\c_n(\ga)$ be as above and let $f$ be the function whose existence is supposed in Assumption \ref{assum:1}. Letting $\ell_i=2\frac{\partial f}{\partial \ga_i}$, then for each face of $\P$ equation \eqref{eq:faceequation} is satisfied. 
\end{theorem}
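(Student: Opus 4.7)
The plan is to establish a quantum analogue of equation \eqref{eq:faceequation}---a matrix-valued identity among Kauffman brackets with shifted colors---and then pass to the semiclassical limit using Assumption \ref{assum:1}. For each face $F$ of $\P$, the first step is to derive an identity of the form
\[
\prod_i \hat M_i(c,A)\,\hat B_i(c,A) = -\mathrm{Id},
\]
where the product runs over the edges $e_i$ of $F$ in cyclic order and both sides are $2\times 2$ matrices whose entries are $\mathbb Z[A^{\pm 1}]$-linear combinations of Kauffman brackets $\brunit{\Gamma}{c+\delta}$, with $\delta$ ranging over admissible $\pm 1/2$-shifts supported on the edges bordering $F$. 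Such an identity would be obtained by repeatedly applying skein and fusion relations for the Kauffman bracket, walking around the boundary of $F$ until the loop closes; it is the direct generalization of the Gordon--Schulten recursion, which is the special case when $F$ is a face of a tetrahedron and the identity reduces to a three-term recurrence on $6j$-symbols.

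Next, I would specialize to $c = c_n(\gamma)$ and $A = \exp(\I\pi/(2n))$, normalizing by dividing every entry of each $\hat B_i$ by $\brunit{\Gamma}{c_n(\gamma)}$. After normalization, each such entry is a ratio of the form controlled by Assumption \ref{assum:1}, hence converges as $n \to \infty$ to $\exp\bigl(\sum_j 2\delta(e_j)\,\partial f/\partial \gamma_j\bigr) = \exp\bigl(\sum_j \delta(e_j)\,\ell_j\bigr)$. By arranging the manipulations in the first step so that $\hat B_i$ only involves $\delta$ supported on $e_i$ and the edges adjacent to $e_i$ inside $F$, these limits collapse to $e^{\pm \ell_i/2}$, and the sum/difference pairs combine via $\cosh(x/2)\pm\sinh(x/2) = e^{\pm x/2}$ into the matrix
\[
B_i = \begin{pmatrix} \cosh(\ell_i/2) & \sinh(\ell_i/2) \\ \sinh(\ell_i/2) & \cosh(\ell_i/2) \end{pmatrix}.
\]

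Simultaneously, the coefficient matrices $\hat M_i(c,A)$ are $q$-rational in $c$ and $A$ only, and under the same specialization they converge as $n \to \infty$ to smooth functions of $\gamma$. Comparing with the (purely geometric) proof of Theorem \ref{prop:faceequation}(a), where the same $\mathrm{SL}_2(\R)$-valued identity arises from the holonomy around $F$ of the developing map of $\Ptr(\gamma)$, forces these limits to coincide with the explicit functions $a_i(\gamma), b_i(\gamma), c_i(\gamma), d_i(\gamma)$ of \eqref{eq:faceequation}. Taking the limit in the normalized quantum face recursion then produces exactly equation \eqref{eq:faceequation} for the growth function $f$, with $\ell_i = 2\,\partial f/\partial \gamma_i$, as claimed.

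The hard part is the first step. One must choose the sequence of skein manipulations around $F$ so that every intermediate coloring remains admissible (respecting the parity condition of Definition \ref{def:admcol}), so that the resulting matrix product factors cleanly as one matrix per edge of $F$ rather than mixing shifts on distant edges, and so that the coefficient matrices $\hat M_i$ have an $A$-expansion whose classical limit matches the geometric $a_i, b_i, c_i, d_i$ on the nose (not merely up to scalar gauge). This combinatorial-algebraic bookkeeping, which imposes the $\mathrm{SL}_2$-structure already at the quantum level, is precisely what makes the face recursion both derivable and compatible with the semiclassical limit prescribed by Assumption \ref{assum:1}.
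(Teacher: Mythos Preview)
Your overall strategy---derive a quantum recursion attached to each face of $\P$, divide by $\brunit{\Gamma}{c_n}$, and pass to the limit using Assumption~\ref{assum:1}---is the paper's approach. But the form you posit for the quantum identity is not attainable, and this is the real gap.

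The paper's recursion (Proposition~\ref{prop:geometricrecursion}) is obtained by inserting at one vertex of the face a small triangle carrying a $\tfrac12$-colored edge, then computing $\langle\Gamma'\rangle^{\mathrm U}$ two ways: once via the triangle formula~\eqref{eq:triangleformula}, once via the shadow-state sum with the triangle as the region of state $0$. What emerges, for each fixed pair $(\delta_0,\delta_{p-1})\in\{\pm\tfrac12\}^2$, is
\[
\sum_{\delta_1,\ldots,\delta_{p-2}}\Bigl(\prod_{j=0}^{p-2}\QM(j)_{\delta_j,\delta_{j+1}}\Bigr)\brunit{\Gamma}{c+\delta}
=\QM(p-1)_{\delta_{p-1},\delta_0}\,\brunit{\Gamma}{c},
\]
where each $\QM(j)$ is an explicit $2\times 2$ matrix of ratios of $6j$-symbols depending only on $a_j,a_{j+1},b_j$. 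The \emph{coefficients} factor as a product along the face, but the brackets $\brunit{\Gamma}{c+\delta}$ do not: each depends on the entire tuple $\delta$. Hence at finite $n$ the left side is not a product of $2\times 2$ matrices, and there is no quantum identity of the shape $\prod_i \hat M_i\hat B_i=-\mathrm{Id}$ with $\hat B_i$ supported near $e_i$. The matrix-product structure appears only \emph{after} you divide by $\brunit{\Gamma}{c_n}$ and invoke Assumption~\ref{assum:1}, because the limiting ratio $\prod_i\exp(\delta_i\ell_i)$ does split over $i$. That splitting is precisely what the Assumption supplies; it cannot be pushed back to the quantum level, so your Step~1 as stated cannot be carried out.

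The second gap is the sentence ``comparing with the purely geometric proof of Theorem~\ref{prop:faceequation}(a) forces these limits to coincide with $a_i,b_i,c_i,d_i$''. Nothing is forced; one must compute. The paper evaluates each entry of $\QM(j)$ at $A=\exp(\I\pi/2n)$, tracks the phases of the square roots of quantum integers, and shows (Proposition~\ref{prop:kasymp}) that $\lim_n\ev_n(\QM(j))=R\,C(\ell_{2j+1,2j+2})\,R^{-1}U$, where $\ell_{2j+1,2j+2}$ is the length of the truncation edge between $e_j$ and $e_{j+1}$. Identifying that length uses the second hyperbolic law of cosines (Lemma~\ref{lem:matrices}). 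Only after the further algebraic identity $R^{-1}U\,Q\,R=M\,C\,M$ (with $Q$ the diagonal matrix built from $\partial f/\partial\gamma_i$) does the limiting recursion take the form $\prod MC\,MC=-\mathrm{Id}$ of Theorem~\ref{teo:volsatisfies}. None of this follows from merely knowing that $\vol$ satisfies the face equations; the match between the quantum coefficients and the geometric matrices is an honest computation, not a comparison.
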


By Theorem \ref{prop:faceequation}--b, this is a strong indication towards the fact that the growth rate function $f$ differs from the volume by at most a constant. 
Of course one of the main questions left open by this work is whether Assumption \ref{assum:1} does hold, or at least under which geometric or topological conditions it does.

The proof of Theorem \ref{teo:main} is based on the construction of a set of explicit recursion relations which are satisfied by the invariants $\brkauf{\Ga}{\c_n}$. The existence of these recursions can be proved in general via the techniques used in \cite{GaLe}, but it is typically very hard to compute it. We show that for planar graphs there is an easy way of producing such recursions (see Proposition \ref{prop:geometricrecursion}); in the special case when $\Ga=\smalltetra{0.4cm}$  our recursions recover the well known Gordon-Schulten recursion for $6j$-symbols of $U_q(\mathfrak{sl}_2)$. Then we prove Theorem \ref{teo:main} by studying the asymptotical behavior of the coefficients of these equations, which, surprisingly enough, turn out to be strictly related to those of the geometric Equations (\ref{eq:faceequation}). 

\subsubsection{Plan of the paper}
Section \ref{sec:polyhedra} collects facts about hyperbolic polyhedra, including Theorem \ref{prop:faceequation}. Section \ref{sec:shadowstates} contains the definitions and main properties of the quantum invariants $\langle \Gamma,c\rangle^{\mathrm{U}}$. The recursion relations satisfied by $\langle \Gamma,c\rangle^{\mathrm{U}}$ are shown in Section \ref{sec:recursions}; these are used in Section \ref{sec:mainproof} to prove Theorem \ref{teo:main}. Finally in Appendix \ref{sec:easycase} we prove Theorem \ref{teo:easycase}. 

\subsubsection{Acknowledgements}
F.C.  and F.G. were supported by the Agence Nationale de la Recherche through the projects QuantumG\&T (ANR-08-JCJC-0114-01) and DiscGroup (ANR-11-BS01-013), ETTT (ANR-09-BLAN-0116-01) respectively. F.G. was also supported by the Labex CEMPI (ANR-11-LABX-0007-01). RvV thanks the Netherlands organization for scientific research (NWO).

\section{Geometry of hyperideal hyperbolic polyhedra}\label{sec:polyhedra}
Let $\P$ and $\Ptr$ be as in Subsection \ref{sub:introgeom} and let 
${\P}^*$ be the polyhedron dual to ${\P}$ and $E^*=E(\P^*)$ be its set of edges which is naturally in correspondence with $E({\P})$; in what follows we will implicitly use this correspondence to translate a map $\ga:E({\P})\to (0,\pi]$ to one $\ga^*:E({\P}^*)\to (0,\pi]$.
Let $\Ga$ be a planar graph and $\Ga^*$ be its dual. A result of Steinitz shows that $\Ga$ can be the one-skeleton of a convex polyhedron in $\mathbb{R}^3$ iff $\Ga^*$ is $3$-connected i.e.\ it cannot be disconnected by deleting $0,1$ or $2$ vertices. So we shall suppose from now on that $\Gamma$ is $3$-connected.

The following theorem, proved by X. Bao and F. Bonahon (Theorem 1 of \cite{BaBo}, see also \cite{Ri}), allows one to identify the set $A(\P)$ of possible angle structures on $\Ptr$:
\begin{theorem}\label{teo:bonahon}
A map $\ga:E({\P})\to (0,\pi]$ is the set of exterior dihedral angles on a hyperbolic hyperideal polyhedron combinatorially equivalent to $\P$ iff each simple closed curve $c$ in the $1$-skeleton of ${\P}^*$ satisfies $\sum_{e\in c} \ga^*(e)> 2\pi$,  and each path $c'$ composed of edges of ${\P}^*$ such that $\partial c'$ is contained in the closure of a single face of ${\P}^*$ but $c'$ is not, satisfies $\sum_{e\in c'} \ga^*(e)> \pi$, where the sums run over the edges of ${\P}^*$ contained in $c$ (resp. $c'$). Such a map $\gamma$ identifies $\Ptr$ uniquely up to isometry.
\end{theorem}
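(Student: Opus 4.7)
The plan is to prove both directions via polar duality combined with a continuity-and-rigidity argument. Write $\mathcal{D}(\P)$ for the space of hyperideal hyperbolic polyhedra combinatorially equivalent to $\P$ modulo isometry, and $\Phi:\mathcal{D}(\P) \to (0,\pi]^{E(\P)}$ for the exterior-dihedral-angle map. The theorem claims $\Phi$ is a bijection onto $A(\P)$.

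For the necessity direction I would invoke the polar dual $\P^\vee\subset\mathrm{dS}^3$ of $\Ptr$ in $3$-dimensional de Sitter space. A hyperideal vertex $w_i$ is polar-dual to a totally geodesic spacelike half-space, and the intersection of these half-spaces realizes $\P^\vee$; combinatorially $\P^\vee$ is isomorphic to $\P^*$, and each edge of $\P^\vee$ has spacelike length equal precisely to the exterior dihedral angle $\gamma^*(e)$. The inequality for a simple closed curve $c$ in $\P^{*(1)}$ then follows from a discrete Gauss--Bonnet argument on a spanning disc of $c$ in $\partial\P^\vee$, exploiting the positive intrinsic curvature of spacelike surfaces in $\mathrm{dS}^3$; the inequality for a relative path $c'$ with $\partial c'\subset\overline{f^*}$ is obtained by closing $c'$ with an arc inside $f^*$ and running the same computation on the resulting bigon-like disc.

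For the converse and uniqueness I would run the continuity method. A parameter count (each hyperideal vertex contributes a point in $\mathbb{R}^3\setminus\overline{\mathbb{K}^3}$, modulo the $6$-dimensional isometry group and modulo planarity of each face of $\P$) shows $\dim\mathcal{D}(\P)=\dim A(\P)$. The key local step is infinitesimal rigidity: the Schl\"afli formula combined with a Cauchy-type combinatorial argument, adapted to hyperideal polyhedra, forces any nontrivial first-order deformation of $\Ptr$ to move at least one dihedral angle, so $\Phi$ is a local diffeomorphism; combined with necessity, its image lies in $A(\P)$. Since $A(\P)$ is connected (a convex polytope cut by linear inequalities), it now suffices to show $\Phi$ is proper: a proper local diffeomorphism onto a connected manifold is a covering, and comparison with a single explicit symmetric model (e.g.\ a right-angled polyhedron of the same combinatorial type, or a small perturbation thereof) shows the covering has degree one, proving both surjectivity and injectivity.

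The main obstacle is the properness step. Given a sequence $\Ptr_n$ with $\Phi(\Ptr_n)\to\gamma_\infty\in A(\P)$, I must rule out every combinatorial degeneration: two hyperideal vertices colliding, an edge crossing $\partial\mathbb{K}^3$ transversally and splitting into two, a whole face collapsing, or a vertex escaping to infinity in $\mathbb{R}^3$. In each case the goal is to trace the degeneration back to a simple closed curve $c\subset\P^{*(1)}$ or a relative path $c'$ whose angle sum reaches the critical value $2\pi$ (respectively $\pi$) in the limit, contradicting $\gamma_\infty\in A(\P)$. Matching each combinatorial type of degeneration with the correct forbidden curve or path is the delicate combinatorial heart of the argument, and it is precisely what forces the two explicit families of inequalities in the statement.
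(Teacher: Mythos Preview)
The paper does not actually prove this theorem: it is stated with the attribution ``proved by X.~Bao and F.~Bonahon (Theorem~1 of~\cite{BaBo}, see also~\cite{Ri})'' and then used as a black box. So there is no ``paper's own proof'' to compare against.

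That said, your outline is a faithful sketch of the Bao--Bonahon strategy (polar duality into de Sitter space for necessity, infinitesimal rigidity plus the continuity method for sufficiency and uniqueness), and you have correctly identified properness as the technical crux. One caution: your last paragraph in the properness step is too optimistic as written. The actual degeneration analysis in~\cite{BaBo} does not proceed by directly reading off a forbidden curve from each combinatorial collapse; rather, one passes to a limiting (possibly lower-dimensional or partially ideal) convex object and invokes a generalized Cauchy--Alexandrov argument there. Simply ``matching each combinatorial type of degeneration with the correct forbidden curve'' is not something that can be done by hand case-by-case, and attempting it would likely leave gaps. If you want a self-contained proof rather than a citation, you should either follow~\cite{BaBo} closely on this point or invoke Rivin's variational reformulation.
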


\begin{example}[The hyperideal tetrahedron]\label{ex:tetrahedron}
If ${\mathcal P}$ is a tetrahedron, then so is ${\mathcal P}^*$ and if the set of exterior dihedral angles on $P$ is $(\alpha,\beta,\gamma)$ (on three edges  adjacent to a vertex of ${\mathcal P}$) and $(\alpha',\beta',\gamma')$ on the respective opposite edges, then the conditions for closed curves provided in Theorem \ref{teo:bonahon} impose the following inequalities:
\begin{align}
\label{babo1} \alpha+\beta+\gamma> 2\pi,\ \alpha'+\beta+\gamma'> 2\pi, \alpha'+\beta'+\gamma> 2\pi, \alpha+\beta'+\gamma'> 2\pi,\\
\label{babo2} \alpha'+\alpha+\beta'+\beta>2\pi , \ \alpha'+\alpha+\gamma+\gamma'>2\pi,\ \beta'+\beta+\gamma+\gamma'>2\pi.
\end{align}
But since all the angles are in $(0,\pi]$, Condition \eqref{babo1} entails that the angles on any two adjacent edges sum to more than $\pi$, which implies both \eqref{babo2} and the condition on non-closed paths in Theorem \ref{teo:bonahon}.
One also has $\alpha+\beta-\gamma>0$ (and similarly for all triples of angles around a vertex). 
\end{example}

\subsection{Theorem \ref{prop:faceequation}--a: Geometric equations associated to faces of $\P$}\label{sec:faceequations}
Let $F\subset \H^2\subset \H^3$ be a compact, convex polygon with $2p$ edges and whose exterior angles are all $\frac{\pi}{2}$. Let $v_0,v_1,\ldots v_{2p}=v_0$ be its vertices and let $\ell_{j,j+1}={\rm length(}\overline{v_jv_{j+1}})$; let $C(\overrightarrow{v_jv_{j+1}}):=C(\ell_{j,j+1})$ where:
$$C(\ell):=\begin{pmatrix}
\cosh\frac{\ell}{2} & \sinh\frac{\ell}{2}\\
\sinh\frac{\ell}{2} & \cosh\frac{\ell}{2}\\
\end{pmatrix},\ \text{ and let }\ 
M:=\frac{1}{\sqrt{2}}\begin{pmatrix} 1 &-1\\ 1 & 1\\ \end{pmatrix}.$$
\begin{proposition}\label{prop:circlegeometric}
The following holds:
 $$\prod_{j=0}^{2p-1} MC(\overrightarrow{v_jv_{j+1}}):=MC(\overrightarrow{v_{2p-1}v_0})MC(\overrightarrow{v_{2p-2}v_{2p-1}}) \cdots MC(\overrightarrow{v_0,v_1})=-{\rm Id}.$$
\end{proposition}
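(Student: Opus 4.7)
\emph{Moving-frame interpretation.} The plan is to interpret $P$ as the holonomy in $\mathrm{SL}_2(\R)$ of a moving frame going around a compact right-angled $2p$-gon in $\H^2$; this forces $P\in\{\pm\mathrm{Id}\}$, after which a continuity argument together with the regular case will pin down the sign. Work in the upper half-plane model and let $F_0=(i,\hat e)$ be the reference frame, with $i\in\H^2$ and $\hat e$ the unit tangent pointing east. Direct inspection of the fixed points shows that $C(\ell)$ acts as hyperbolic translation by $\ell$ along the geodesic through $i$ tangent to $\hat e$ (fixing $\pm 1$), and $M$ acts as rotation by $\pi/2$ about $i$ (fixing $i$). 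In the ``right-multiplication'' moving-frame convention---where a state $g\in\mathrm{SL}_2(\R)$ encodes the frame $g\cdot F_0$ and right-multiplication by an element realises the corresponding action in the current frame---reading
$$
P \;=\; M\cdot C(\ell_{2p-1,0})\cdot M\cdot C(\ell_{2p-2,2p-1})\cdots M\cdot C(\ell_{0,1})
$$
from left to right records $2p$ successive operations of the form ``rotate $\pi/2$ counterclockwise in place, then walk forward by $\ell_{j,j+1}$'' (with $j$ decreasing from $2p-1$ to $0$). Geometrically this trajectory is the boundary of a compact right-angled $2p$-gon in $\H^2$ with consecutive side lengths $(\ell_{2p-1,0},\ldots,\ell_{0,1})$---i.e.\ the mirror image of $F$---so it closes up and the terminal frame equals $F_0$. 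Hence $P\in\{\pm\mathrm{Id}\}$.

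\emph{Fixing the sign.} The matrix $P$ depends continuously on $F$, and the moduli space of compact right-angled $2p$-gons in $\H^2$ is connected (homeomorphic to $(\R_{>0})^{2p-3}$ via a standard trigonometric parametrisation), so the sign is constant and can be evaluated on the regular polygon. For the regular right-angled $2p$-gon, the isoceles triangle $O v_0 v_1$---with $O$ the centre of symmetry---has angles $\pi/p$ at $O$ and $\pi/4$ at $v_0,v_1$, so the dual hyperbolic law of cosines yields $\cosh\ell = 1+2\cos(\pi/p)$, equivalently $\cosh(\ell/2)=\sqrt{2}\cos(\pi/(2p))$. A short matrix computation then gives $\operatorname{tr}(MC(\ell))=\sqrt{2}\cosh(\ell/2)=2\cos(\pi/(2p))$, so $MC(\ell)$ is conjugate in $\mathrm{SL}_2(\R)$ to the rotation by angle $\pi/p$. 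Therefore $P=(MC(\ell))^{2p}$ is conjugate to rotation by $2\pi$, which in $\mathrm{SL}_2(\R)$ equals $-\mathrm{Id}$, concluding the proposition.

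\emph{Main obstacle.} The principal subtlety is the moving-frame set-up itself: identifying which polygon is traced by the left-to-right reading of $P$ (namely the mirror image of $F$, not $F$), and verifying that its boundary closes up in $\H^2$. Once that interpretation is granted, the remaining ingredients---continuity of $P$ on a connected moduli space, the dual hyperbolic law of cosines on $Ov_0v_1$, and the trace of $MC(\ell)$---are all short and routine.
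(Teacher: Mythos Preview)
Your argument is correct and rests on the same idea as the paper's: the product $P$ is the $\mathrm{SL}_2(\R)$-lift of the holonomy of the tangent frame around a right-angled $2p$-gon, hence equals $\pm\mathrm{Id}$, and the sign comes from the fact that a full turn in $T^1\H^2$ lifts to $-\mathrm{Id}$. The paper phrases this dually---reading the product right-to-left as the standard left action and sliding the polygon so that successive vertices sit at the reference frame---and for the sign simply invokes ``a full turn lifts to $-\mathrm{Id}$'' without further computation. Your route to the sign (continuity over the connected moduli space of right-angled $2p$-gons, plus an explicit trace computation in the regular case) is more laborious but has the virtue of being completely explicit; the paper's one-line justification is shorter but assumes the reader knows why the loop traced in $\mathrm{PSL}_2(\R)$ is homotopic to the fiber. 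One small slip: $M=R(\pi/4)$ acts on $T_i\H^2$ by multiplication by $M'(i)=e^{-i\pi/2}$, i.e.\ a \emph{clockwise} quarter-turn (as the paper states), not counterclockwise; consequently the traced polygon is $F$ with reversed orientation rather than its mirror. This does not affect the argument, since either way one traces a closed right-angled $2p$-gon and the frame returns to itself.
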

\begin{proof}
In the upper-half plane model of $\H^2$, up to isometry, we may suppose that $v_0=\I$ and that the edge $v_0v_1$ is on the geodesic $L$ connecting $-1$ and $1$, that its points have negative real part and finally that all the other vertices $v_i$ of the polygon $F$  are contained in the unit disc.
The isometry represented by $C(\overrightarrow{v_0v_1})$ slides the edge along $L$ until $v_1=\I$, then the matrix $M$ applies a $-\frac{\pi}{2}$ rotation with center $\I$. This isometry has then moved $v_1$ in the same position as the initial position of $v_0$ and we can iterate the process until $v_0$ returns to its initial position. The identity expresses the fact that these isometries overall describe the identity map, the minus sign is due to the fact that we are working in $SL_2(\mr)$ thus a full turn lifts to $-{\rm Id}$.
\end{proof} 

Let us recall the second law of cosines for hyperbolic triangles:
\begin{proposition}[Second law of cosines]\label{prop:alkashi}
Let $\gamma_i,\ i=1,2,3$ be the exterior angles of a hyperbolic triangle and $\ell_i$ be the lengths of the edges opposite to $\gamma_i$. 
If $\{i,j,k\}=\{1,2,3\}$, then $$\cosh(\ell_i)=\frac{\cos(\gamma_j)\cos(\gamma_k)-\cos(\gamma_i)}{\sin(\gamma_j)\sin(\gamma_k)}.$$
\end{proposition}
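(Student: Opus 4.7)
The statement is a classical identity in hyperbolic trigonometry, so the plan is simply to reduce it to the standard first law of cosines through the polar (dual) triangle construction, and then translate the result from interior into exterior angles as the paper's convention requires. There is no genuine obstacle here; the work is essentially a careful change of variables.

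Concretely, I would first fix notation. Let $A_m := \pi - \gamma_m$ denote the \emph{interior} angles of the triangle, so that $\cos A_m = -\cos\gamma_m$ and $\sin A_m = \sin\gamma_m$. My plan is to establish the formula in the form
\[
\cosh(\ell_i) \;=\; \frac{\cos A_i + \cos A_j\cos A_k}{\sin A_j\sin A_k}
\]
and then substitute.

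To prove this intermediate formula I would use the hyperboloid model $\mathbb{H}^2\subset \mathbb{R}^{2,1}$. Represent each vertex as a unit timelike vector $v_m$ with $v_0$-coordinate positive, and each edge as a unit spacelike vector $n_m$ normal to the plane of the opposite edge (oriented outwards). The Minkowski inner products satisfy $\langle v_j,v_k\rangle = -\cosh\ell_i$ and $\langle n_j,n_k\rangle = -\cos A_i$, where $\{i,j,k\}=\{1,2,3\}$. The polar triangle $\{n_1,n_2,n_3\}$ is, up to a standard rescaling (using that its Gram matrix is the adjugate of the Gram matrix of $\{v_1,v_2,v_3\}$), dual to the original one: its edge ``cosh-lengths'' are the $\cos A_m$'s and its angle cosines are the $\cosh\ell_m$'s. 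Applying the first law of cosines to the polar triangle, or equivalently computing the Gram determinant and its cofactors explicitly, yields the displayed intermediate formula. One could also simply quote any textbook treatment of hyperbolic trigonometry, since the result is standard.

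Finally, substituting $\cos A_m = -\cos\gamma_m$ and $\sin A_m = \sin\gamma_m$ into the intermediate formula gives
\[
\cosh(\ell_i) \;=\; \frac{-\cos\gamma_i + (-\cos\gamma_j)(-\cos\gamma_k)}{\sin\gamma_j\sin\gamma_k} \;=\; \frac{\cos\gamma_j\cos\gamma_k - \cos\gamma_i}{\sin\gamma_j\sin\gamma_k},
\]
which is the stated identity. The only ``care point'' is keeping track of which angles are interior versus exterior; aside from that bookkeeping, the proof is immediate from the classical material.
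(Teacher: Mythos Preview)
Your proof is correct, but there is nothing to compare it against: the paper does not prove this proposition at all. It is stated with the preamble ``Let us recall the second law of cosines for hyperbolic triangles'' and is treated as a classical fact quoted without argument. Your polar-triangle derivation is the standard textbook route and is perfectly fine; the only content beyond the classical identity is the exterior-versus-interior angle bookkeeping, which you have handled correctly.
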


Let now $\P$ be a convex trivalent polyhedron in $\mathbb{R}^3$ whose truncation gives a hyperbolic hyperideal polyhedron $\Ptr$, and let $\{v_s\}$ be the vertices of $\Ptr$. 
Let $\lambda^{\mathrm{trunc}}=v_0,v_1,\cdots, v_{2p}=v_0$ be the boundary of a face of $\Ptr$.
To each edge $\overrightarrow{v_s v_{s+1}}$ of $\lambda^{\mathrm{trunc}}$ we may associate the hyperbolic length $\ell_s:=\ell(\overrightarrow{v_s v_{s+1}})\in \mr_+$ of $\overrightarrow{v_sv_{s+1}}$ and an external dihedral angle $\gamma_s:=\gamma(\overrightarrow{v_s v_{s+1}})\in (0,\pi]$ which is defined as $\frac{\pi}{2}$ if $\overrightarrow{v_{s}v_{s+1}}$ is a truncation edge (i.e.\ if $s$ is odd) and else it is the external dihedral angle at the edge $\overline{v_{s} v_{s+1}}$ of $\Ptr$. Furthermore, for each truncation edge (odd $s$) we may define an ``opposite angle'' $\gamma'_s$ as the external dihedral angle opposite to $v_sv_{s+1}$ in the truncation triangle containing $v_sv_{s+1}$.

Then the following holds (finishing \ref{prop:faceequation}--a):
\begin{theorem}\label{teo:volsatisfies}
For each loop $\lambda\subset P^{(1)}$ which is the boundary of an oriented face,
\begin{equation*}%\label{eq:polyhedralfaceequation}
\prod_{j=0}^{p-1}MC\left({\rm arccosh}\frac{\cos(\gamma_{2j+2})\cos(\gamma_{2j})-\cos(\gamma'_{2j+1})}{\sin(\gamma_{2j})\sin(\gamma_{2j+2})}\right)MC(\ell_{2j,2j+1})=-{\rm Id.}
\end{equation*}
\end{theorem}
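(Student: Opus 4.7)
The plan is to recognise the boundary of each face of $\Ptr$ as a right-angled convex hyperbolic polygon to which Proposition~\ref{prop:circlegeometric} applies directly, and then to identify the two types of edge lengths occurring in that polygon via the second law of cosines (Proposition~\ref{prop:alkashi}).

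First I would verify that if $F$ is a face of $\Ptr$ then $\partial F$ is a convex polygon with $2p$ vertices lying in the hyperbolic $2$-plane carrying $F$, whose edges alternate between ``original'' edges (portions of edges of $\P$) and ``truncation'' edges (intersections of $F$ with truncation faces), and whose interior angles are all $\pi/2$. The key geometric input is that each truncation plane $\partial \pi_i$ is the polar plane of the hyperideal vertex $w_i$, and therefore meets every geodesic through $w_i$ orthogonally; in particular it meets each edge of $\P$ incident to $w_i$ orthogonally in $\H^3$. Restricted to the $2$-plane carrying $F$, this gives perpendicularity between truncation and original edges at every vertex of $\partial F$. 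Convexity of $F$ follows from convexity of $\Ptr$. Proposition~\ref{prop:circlegeometric} then yields
$$\prod_{s=0}^{2p-1} MC(\overrightarrow{v_s v_{s+1}}) = -\mathrm{Id}.$$

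Second, I would compute the two kinds of edge lengths. Original edges $\overrightarrow{v_{2j}v_{2j+1}}$ have length $\ell_{2j}$ by definition. For a truncation edge $\overrightarrow{v_{2j+1}v_{2j+2}}$, let $w$ be the hyperideal vertex of $\P$ whose truncation face contains it. The truncation face at $w$ is a hyperbolic triangle $T_w$ whose three vertices are in bijection with the three edges $e_a,e_b,e_c$ of $\P$ meeting at $w$. I claim that the exterior angle of $T_w$ at the vertex corresponding to $e_x$ equals the exterior dihedral angle $\gamma(e_x)$ of $\P$ along $e_x$. Indeed, since $e_x \perp \partial \pi_w$, the plane $\partial \pi_w$ is itself the perpendicular plane to $e_x$ at $e_x \cap \partial \pi_w$; the two edges of $T_w$ meeting at that vertex lie in the two faces of $\P$ adjacent to $e_x$, are perpendicular to $e_x$ inside those faces, and hence subtend exactly the interior dihedral angle $\pi - \gamma(e_x)$ between the two faces at $e_x$. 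Among the three edges of $\P$ incident to $w$, two lie in $F$ and contribute exterior dihedral angles $\gamma_{2j}$ and $\gamma_{2j+2}$, while the third, opposite to $F$ at $w$, contributes the exterior angle $\gamma'_{2j+1}$ of the statement. Since $\overrightarrow{v_{2j+1}v_{2j+2}}$ is the side of $T_w$ opposite to this third vertex, Proposition~\ref{prop:alkashi} yields
$$\cosh\bigl(\ell(\overrightarrow{v_{2j+1}v_{2j+2}})\bigr) = \frac{\cos(\gamma_{2j})\cos(\gamma_{2j+2}) - \cos(\gamma'_{2j+1})}{\sin(\gamma_{2j})\sin(\gamma_{2j+2})}.$$

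Substituting the two families of edge lengths into the matrix identity of Proposition~\ref{prop:circlegeometric} gives exactly the claimed equation. The main obstacle I anticipate is the careful bookkeeping of interior versus exterior angles and of orientations: one must check that the convention under which the exterior angle of $T_w$ at $e_x$ is read off as $\gamma(e_x)$ matches the convention of Proposition~\ref{prop:alkashi}, and that $\partial F$ is traversed in the sense compatible with the $-\pi/2$ rotation encoded by $M$. Once these conventions are aligned the result follows by direct substitution.
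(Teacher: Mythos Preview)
Your proposal is correct and follows exactly the same route as the paper: apply Proposition~\ref{prop:circlegeometric} to the right-angled $2p$-gon bounding the face, then identify the truncation-edge lengths via Proposition~\ref{prop:alkashi}. The paper's proof is a single sentence to this effect; your added justification that the truncation triangle's exterior angles coincide with the dihedral angles $\gamma_{2j},\gamma_{2j+2},\gamma'_{2j+1}$ (via the polar-plane orthogonality) simply makes explicit what the paper leaves implicit.
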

\begin{proof}
It is a direct consequence of Proposition \ref{prop:circlegeometric}, once one computes the length of $\overrightarrow{v_{2j+1}v_{2j+2}}$ via Proposition \ref{prop:alkashi}.
\end{proof}
\subsection{Theorem \ref{prop:faceequation}--b: Rigidity of functions satisfying face equations.}
Let $\Ga$ be the one-skeleton of a polyhedron ${\P}$ and $f:U\to \mr$ be a smooth function defined on a connected open subset $U$ of $A(\P)$.
We say that $f$ \emph{satisfies the matrix-valued face equations} if, setting $\ell_i:=2\frac{\partial f}{\partial \gamma_i}$, the $\mathrm{SL}_2(\R)$-valued equations described in Theorem \ref{teo:volsatisfies} are satisfied for all simple closed curves $\lambda^{\mathrm{trunc}}$ in the $1$-skeleton of $\Ptr$ formed by the boundaries of the non-truncation faces of $\Ptr$.
The Schl\"afli formula \eqref{eq:schlaefli} implies that $\vol$, in particular, satisfies the matrix-valued face equations.
The goal of this subsection is to check that any function $f$ close to $\vol$ in the $C^1$ sense that also satisfies the matrix-valued face equations, is equal to $\vol$ up to an additive constant.

This is a simple consequence of the rigidity of hyperideal polyhedra (Theorem~\ref{teo:bonahon}).
Indeed, if the partial derivatives of $f$ were different from those of $\vol$ at a given point $\gamma\in (0,\pi]^{E(\P)}$, then $f$ would define a small deformation $\P_f$ of the hyperideal polyhedron $\P$ (with the same angles $\gamma$ but different edge lengths), which is impossible.

More precisely, any function $f$ as above allows us to define an $(\mathbb{H}^3, \mathrm{Isom}(\mathbb{H}^3))$-structure on a neighborhood of the polyhedron $\Ptr\subset \mathbb{R}^3$ with dihedral angles $\gamma$, as follows. First, let $U_1$ and $U_2$ be tubular neighborhoods of the $1$- and $2$-skeleta of $\Ptr$ in $\mathbb{R}^3$, respectively. Endow $\Ptr$ with formal (real) edge lengths, given by $\ell_i=2\frac{\partial f}{\partial \gamma_i}$ at interior edges, and by Proposition \ref{prop:alkashi} (independently of $f$) at truncation edges. Let $\widetilde{U_1}$ denote the universal cover of $U_1$. The above length data is enough to define a developing map $\widetilde{\Phi}_f: \widetilde{U_1} \rightarrow \mathbb{H}^3$ respecting all angles, edge lengths and framings, in the sense that $\widetilde{\Phi}_f$ takes the lift of the loop around a $2p$-gonal face $F$ of $\Ptr$ to a path $F_f$ formed of $2p$ coplanar segments in $\mathbb{H}^3$, each orthogonal to the next. We may assume that $F_f$ lies in $\mathbb{H}^2\subset \mathbb{H}^3$.

The identity of Proposition \ref{prop:circlegeometric}, associated to the loop around the $p$-gonal face of $\P$ containing $F$, implies that $F_f$ closes up in $\mathbb{H}^2$, with the given edge lengths, to form a right-angled planar $2p$-gon (still denoted $F_f$). Since $F_f$ and $F_{\vol}$ have nearly the same edge lengths, it follows that $F_f$ bounds a \emph{convex} right-angled $2p$-gon of~$\mathbb{H}^2$. As for truncation triangles, they similarly close up under $\widetilde{\Phi}_f$ since $\widetilde{\Phi}_f$ assigns them the same angles and edge lengths as truncation triangles of $\Ptr$.
Therefore $\widetilde{\Phi}_f$ decends to a map $\Phi_f:U_1\rightarrow \mathbb{H}^3$ which can be extended to the universal cover of $U_2$. As $U_2$ is simply connected, we actually have a developing map $\Phi_f:U_2\rightarrow \mathbb{H}^3$. Since $\Phi_f$ and $\Phi_{\vol}$ are very close, it follows that $\Phi_f$ (like $\Phi_{\vol}$) defines a realization of $\partial \Ptr$ as the boundary of a \emph{convex} truncated hyperideal polyhedron $\Ptr_f$. By rigidity of convex polyhedra, $\Ptr_f$ and $\Ptr$ are actually isometric (via $\Phi_f$) and have in particular the same edge lengths, which means that $f$ and $\vol$ have the same partial derivatives at $\gamma$, for all real tuples $\gamma$. Hence $f-\vol$ is locally constant: Theorem \ref{prop:faceequation}--b is proved.

\section{Shadow-state formulation of $\brkauf {\Ga}{\c}$}\label{sec:shadowstates}
In this section we will recall the definition of quantum spin networks through their ``shadow-state formulas'' (as these formulas will be used later on). 
We now fix a diagram $D$ of $\Ga$ such that the blackboard framing coincides with that of $\Ga$ (it is easy to check that it exists), and an admissible coloring $\c$ on $\Ga$, and recall how the Kauffmann bracket $\brkauf{\Ga}{\c}$ is defined.
Let $A\in\C$, $q=A^2$, $\{n\}:=A^{2n}-A^{-2n}$, 
$$[n]:=\{n\}/\{1\} \:,\:\: [n]!:= \prod_{j=1}^n [j] \:,\:\: [0]!:=1 \:,\:\: \qbin{n}{k}{}:= \frac{[n]!}{[k]![n-k]!}$$ 
and similarly for multinomials. 
In order to be able later on to coherently choose branches of square roots, we define
$\Delta\Hol(A)$ to be the set of functions which are holomorphic inside the unit disk $\D$ and whose square extends to a meromorphic function on $\C$.
We define $\sqrt{[k]}\in \Delta\Hol(A)$ as the root of $[k]$ which at $A=1$ is $\sqrt{k}$ and then we extend this choice on products of quantum integers so that $\sqrt{[n]!}:=\prod_{j=1}^n \sqrt{[j]}$ and $\sqrt{\frac{1}{[n]!}}:=\frac{1}{\sqrt{[n]!}}$. (Remark that these square roots exist on $\D$ because $[k]=A^{2-2k}(\sum_{i=0}^{k-1} A^{4i})$ and both terms admit a square root on $\D$).

\begin{definition*}[Unknot]
We \emph{define} the value of the colored $0$-framed unknot as follows:
\begin{equation*}%\label{eq:nunknot}
\smallunknot{1cm}\put(-10,8){$a$}:=(-1)^{2a}[2a+1].
\end{equation*}
\end{definition*}
Let also
$$\Delta(a,b,c):=\sqrt{\frac{[a+b-c]![b+c-a]![c+a-b]!}{[a+b+c+1]!}}.$$
The (unitary normalization of the) tetrahedron or $6j$-symbol is then defined as follows:
\begin{definition}[The tetrahedron or symmetric $6j$-symbol]\label{ex:tet}
Let us \emph{define} the value of the admissibly colored tetrahedron embedded as a planar graph in $\mathbb{R}^2$ as:

\begin{align}\label{eq:tet}
\Bigg(\smalltetra{1cm}\put(-13,8){$a$}\put(-8,-1){$b$}\put(-23,-1){$c$}\put(-16,-10){$d$}\put(-2,10){$f$}\put(-24,9){$e$}\ 
\Bigg)^{\mathrm{U}}:=\I^{2(a+b+c+d+e+f)}\Delta(a,b,c)\Delta(a,e,f)\Delta(d,b,f)\Delta(d,e,c) \\
\nonumber \times \sum_{k= \max T_i}^{k=\min Q_j} (-1)^{k}\begin{bmatrix}
k+1\\
k-T_1, k-T_2, k-T_3, k-T_4, Q_1-k, Q_2-k, Q_3-k,1 \end{bmatrix}
\end{align}
where $T_1=a+b+c,\ T_2=a+e+f,\ T_3=d+b+f,\ T_4=d+e+c,\  Q_1=a+b+d+e,\ Q_2=a+c+d+f, \ Q_3=b+c+e+f$. 
\end{definition}
\begin{definition}[The crossed tetrahedron]\label{ex:crossedtet} 
Let us \emph{define} the value of the admissibly colored tetrahedron, embedded in $\mathbb{R}^3$ as in the diagram below, by: 
\begin{equation*}%\label{eq:ntetcross}
\smallcrossedtetra{1cm}\put(-14,17){$a$}\put(-5,-1){$b$}\put(-12,8){$c$}\put(-16,-11){$d$}\put(-22,4){$f$}\put(-33,4){$e$}: =\I^{2(e+b-a-d)}A^{2(e^2+e+b^2+b-a^2-a-d^2-d)}\ \ \Bigg( \smalltetra{1cm}\put(-13,8){$a$}\put(-8,-1){$b$}\put(-23,-1){$c$}\put(-16,-10){$d$}\put(-2,10){$f$}\put(-24,9){$e$}\ \Bigg)^{\mathrm{U}}
\end{equation*}
\end{definition}

\begin{remark}\label{rem:tet0}
If in formula \eqref{eq:tet} one color, say $f$, is $0$ then the admissibility conditions force $e=a$ and $d=b$ and, using also that $a+b+c\in \Z$ we get:
\begin{equation*}%\label{eq:unit6jzerocolor}
\Bigg(\smalltetra{1cm}\put(-13,8){$a$}\put(-8,-1){$b$}\put(-23,-1){$c$}\put(-16,-10){$d$}\put(-2,10){$f$}\put(-24,9){$e$}\ \Bigg)^{\mathrm{U}}=\I^{2a+2b}\left(\sqrt{[2a+1][2b+1]}\right)^{-1}
\end{equation*}
which does in fact not depend on $c$.
\end{remark}

The preceding formulas can be used as ``building blocks'' to define invariants of colored KTG's up to isotopy in $\mathbb{R}^3$: let $(\Ga,c)$ be a colored KTG, $D\subset \mr^2$ be a diagram chosen so that the framing of $\Ga$ coincides with the blackboard framing; let $V,E$ be the sets of vertices and edges of $\Ga$, and $C,F$ the sets of crossings and edges of $D$. Since each edge of $D$ is a sub-arc of one of $\Ga$ it inherits a coloring from~$c$. Let the \emph{regions} $r_0,\ldots, r_m$ of $D$ be the connected components of $\mr^2\setminus D$ with $r_0$ the unbounded one; we will denote by $R$ the set of regions and we will say that a region ``contains'' an edge of $D$ or a crossing if its closure does. 
\begin{remark}\label{rem:unboundedregion}
Applying an isotopy to $D$ we can force any region to become the ``unbounded'' one, denoted $r_0$ above. We will exploit this freedom in the proof of Proposition \ref{prop:geometricrecursion}.
\end{remark}
\begin{definition}[Shadow-state]
A shadow-state $s$ is a map $s:R \to \frac{\mathbb{N}}{2}$ such that $s(r_0)=0$ and whenever two regions $r_i$ and $r_j$ contain an edge $f$ of $D$ then $s(r_i),s(r_j),c(f)$ form an admissible triple as in Definition \ref{def:admcol}.
\end{definition}
Given a shadow-state $s$, we can define its \emph{weight} as a product of factors indexed by the local building blocks of $D$ i.e.\ the regions of $D$, the vertices of $\Ga$, and the crossings. To define these factors explicitly, in the following we will denote by $a,b,c$ the colors of the edges of $\Ga$ (or of $D$) and by $u,v,t,w$ the values of a shadow-state on the regions (which with an abuse of terminology we will call the shadow-states of the regions).
\begin{enumerate}
\item If $r$ is a region whose shadow-state is $u$ and $\chi(r)$ is its Euler characteristic, $$w_s(r):= \Big(\smallunknot{1cm}\put(-16,-9){$u$}\Big)^{\chi(r)}=\left(\frac{(-1)^{2u}\{2u+1\}}{\{1\}}\right)^{\chi(r)}$$
\item If $v$ is a vertex of $\Ga$ colored by $a,b,c$ and $t,u,v$ are the shadow-states of the regions containing it then $$w_s(v):= \smalltetra{1cm}\put(-13,8){$a$}\put(-8,-1){$b$}\put(-23,-1){$c$}\put(-16,-10){$t$}\put(-2,10){$v$}\put(-24,9){$u$}$$
\item If $c$ is a crossing between two edges of $G$ colored by $a,b$ and $u,v,t,w$ are the shadow-states of the regions surrounding $c$ then $$w_s(c):= 
\smallcrossedtetra{1cm}
\put(-14,17){$w$}
\put(-5,-1){$t$}
\put(-12,8){$a$}
\put(-16,-11){$u$}
\put(-24,2){$b$}
\put(-33,4){$v$}$$
\end{enumerate}  
From now on, to avoid a cumbersome notation, given a shadow-state $s$ we will not explicitly write the colors of the edges of each graph providing the weight of the local building blocks of $D$ as they are completely specified by the states of the regions and the colors of the edges of $\Gamma$ surrounding the block. 
Then we may define the weight of the shadow-state $s$ as:
\begin{equation}\label{eq:shadowstate}
w(s)=\prod_{r\in R}  \smallunknot{0.5cm}^{\chi(r)}\prod_{v\in V}\nns \smalltetra{0.5cm}\prod_{c\in C}\smallcrossedtetra{0.5cm}\ .
\end{equation}
Then, since the set of shadow-states of $D$ is easily seen to be finite, we may define the $\brunit{\Ga}{c}$ as
$$\brunit{\Ga}{\c}:=\sum_{s\in {\rm shadow\ states}} w(s).$$
The following result was first proved by Reshetikhin and Kirillov \cite{KR} (see also \cite{Coint} for a skein theoretical proof) and shows that shadow state-sums provide a different approach to the computation of the so-called ``quantum spin networks'' or ``Kauffman brackets'' in their unitary normalizations $\brunit{\Ga}{c}\in \Delta\Hol(A)$:

\begin{theorem}[Shadow-state formula for Kauffman brackets.]\label{teo:shadowstate}
$\brunit{\Ga}{c}$ is a well defined invariant up to isotopy of $\Ga$.
In particular, if $\Ga$ has a planar diagram $D$ with respect to which the framing of $\Ga$ is the blackboard framing, then formula \eqref{eq:shadowstate} reduces to:
 \begin{equation}\label{eq:shadowformula}
 \brunit{\Ga}{c}=\sum_{s\in {\rm shadow\ states}}\ \ \  \prod_{r\in R}  \smallunknot{0.5cm}^{\chi(r)}\prod_{v\in V}\nns \smalltetra{0.5cm}\ \ .
\end{equation}
\end{theorem}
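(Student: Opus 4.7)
The plan is to prove the theorem in three stages: first, show that the state sum on the right-hand side of \eqref{eq:shadowstate} does not depend on the choice of unbounded region $r_0$; second, verify its invariance under the framed Reidemeister moves for knotted trivalent graphs by reducing each such move to a local identity among quantum $6j$-symbols; and third, observe that for a planar diagram with blackboard framing the crossing contribution is empty, so that \eqref{eq:shadowstate} specializes to \eqref{eq:shadowformula}.

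For the first stage, changing the unbounded region corresponds at the level of the state sum to exchanging the roles of two regions of $D$. I would establish this symmetry by a Fubini-type argument: using Remark \ref{rem:tet0} together with the unknot normalization factor $\chi(r)$ attached to each region, one checks that assigning the color $0$ to any region collapses the local tetrahedron factors around it to a trivial contribution, so that any region of $D$ can play the role of $r_0$. For the third stage, when $\Gamma$ admits a planar diagram $D$ realizing its framing, the set $C$ of crossings is empty, and the product over $c\in C$ in \eqref{eq:shadowstate} is empty; \eqref{eq:shadowformula} follows at once.

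The heart of the argument is the second stage. The framed Reidemeister moves for KTG's are generated by: (i) the Reidemeister moves R2 and R3; (ii) a framing move on each edge (the Whitney trick); (iii) local moves near trivalent vertices. Each move changes the diagram only inside a small disk, and therefore affects only a few local factors of the state sum while the colors of all other regions are held fixed. I would verify:
\begin{itemize}
\item R2 invariance from the orthogonality relation of $6j$-symbols;
\item R3 invariance from the Biedenharn--Elliott (pentagon) identity;
\item invariance under the framing move from the phase $A^{2(e^2+e+b^2+b-a^2-a-d^2-d)}$ built into the definition of the crossed tetrahedron (Definition \ref{ex:crossedtet});
\item invariance under moves near trivalent vertices from the tetrahedral (fusion) identity and orthogonality.
\end{itemize}
Each of these is a classical fact about the representation category of $U_q(\mathfrak{sl}_2)$, in principle verifiable by direct manipulation of the explicit formula \eqref{eq:tet}.

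The main obstacle is the R3 invariance: the underlying Biedenharn--Elliott identity expresses a sum of products of four $6j$-symbols as a product of two $6j$-symbols, and a bare-hands proof requires delicate manipulation of the $q$-hypergeometric sums in \eqref{eq:tet}. A cleaner route, pursued in \cite{Coint}, is to proceed skein-theoretically: one decorates each edge of $\Gamma$ by Jones--Wenzl idempotents, expands the resulting skein element using the standard identities for these projectors, and recognizes the coefficients of the expansion as exactly the local factors appearing on the right-hand side of \eqref{eq:shadowstate}. Invariance of $\brunit{\Ga}{c}$ is then inherited from invariance of the Kauffman bracket under Reidemeister moves, and the shadow-state formula becomes a tautological rewriting of the standard state sum.
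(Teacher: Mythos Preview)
The paper does not actually prove this theorem: it attributes the result to Reshetikhin--Kirillov \cite{KR} and points to \cite{Coint} for a skein-theoretic proof, then simply states the theorem and moves on. So there is no ``paper's own proof'' to compare against.

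Your outline is a faithful sketch of the standard argument found in those references. The three-stage plan is correct, and the identification of the relevant local identities (orthogonality for R2, Biedenharn--Elliott for R3, the framing phase for R1/Whitney, fusion and orthogonality for the vertex moves) is exactly how invariance is established. Your closing paragraph is also accurate: the skein-theoretic route in \cite{Coint}, where one expands each edge into a Jones--Wenzl idempotent and reads off the state-sum coefficients, is precisely the proof the paper is pointing to. One small caveat on your first stage: independence of the choice of $r_0$ is not quite a ``Fubini-type'' symmetry between two regions, but rather the statement that shifting all shadow-states by a constant (or, equivalently, tensoring globally by a line) leaves the state sum unchanged; the collapse via Remark~\ref{rem:tet0} is the right ingredient, but the argument is more naturally phrased as invariance under passing the point at infinity across an edge of the diagram, which changes $r_0$ to an adjacent region and is checked one edge at a time.
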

\begin{remark}\label{rem:squareok}
\begin{enumerate}
\item The above defined invariant coincides with the so-called ``unitary quantum spin network'' defined and studied via recoupling theory in \cite{KL}.
\item For each graph $\Ga$,  $(\brunit{\Ga}{\c})^2$ is the restriction to $\D$ of a meromorphic function on $\C$ whose poles are in the set $\{0,\exp(\frac{\I\pi p}{q}),\ p,q\in \Z\}$. Indeed a close inspection to Formula \eqref{eq:shadowstate} or \eqref{eq:shadowformula} shows that the only odd powers of square roots appearing in the expression are those associated to the colors of the edges of $\Ga$, and thus can be factored out of the whole sum in \eqref{eq:shadowformula}. (All other square roots arising from $6j$-symbols involve one edge color and two state parameters. Each such square root appears twice: once for each endpoint of the edge.)
\end{enumerate}
\end{remark}

As an example we state a simple corollary that we will use often below. Its proof is a simple application of the shadow state formula
but can also be found in \cite{MV}.

\begin{corollary}[Triangle formula]
Suppose $\Ga$ is a KTG such that three edges bound a flat triangular disk. Then we have (representing $\Gamma$ on the left hand side):
\begin{equation}\label{eq:triangleformula}
\left \langle  
\epsh{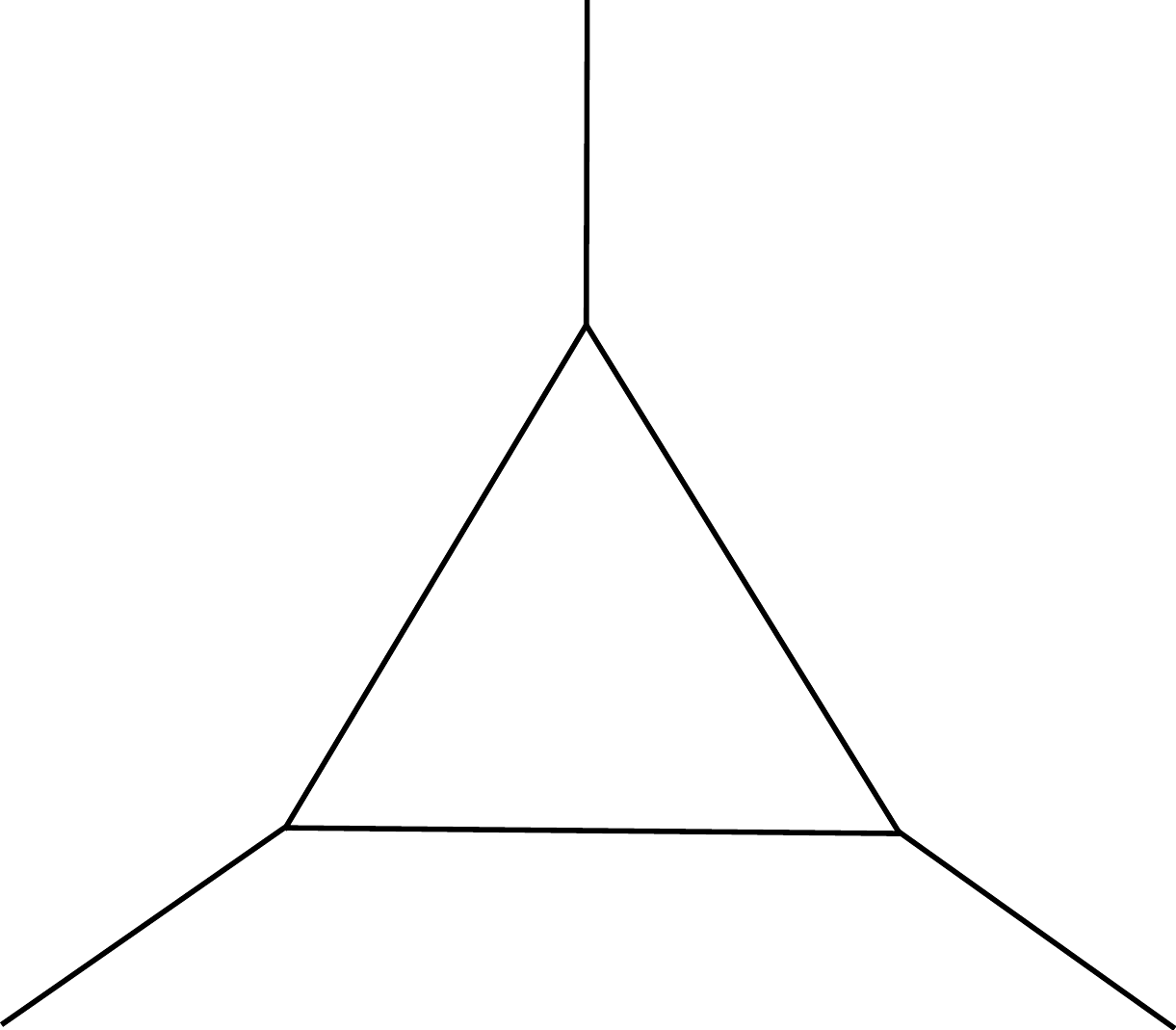}{1cm}
\put(-14,9){$a$}
\put(-34,-9){$c$}
\put(-2,-9){$b$}
\put(-10,0){$f$}
\put(-25,0){$e$}
\put(-17,-13){$d$}  
\ \right \rangle^{\mathrm{U}}
=\left \langle 
\smalltetra{1cm}
\put(-13,8){$a$}
\put(-8,-1){$b$}
\put(-23,-1){$c$}
\put(-16,-10){$d$}
\put(-2,10){$f$}
\put(-24,9){$e$} \: \:
\right \rangle^{\mathrm{U}}\:  
\left \langle 
\epsh{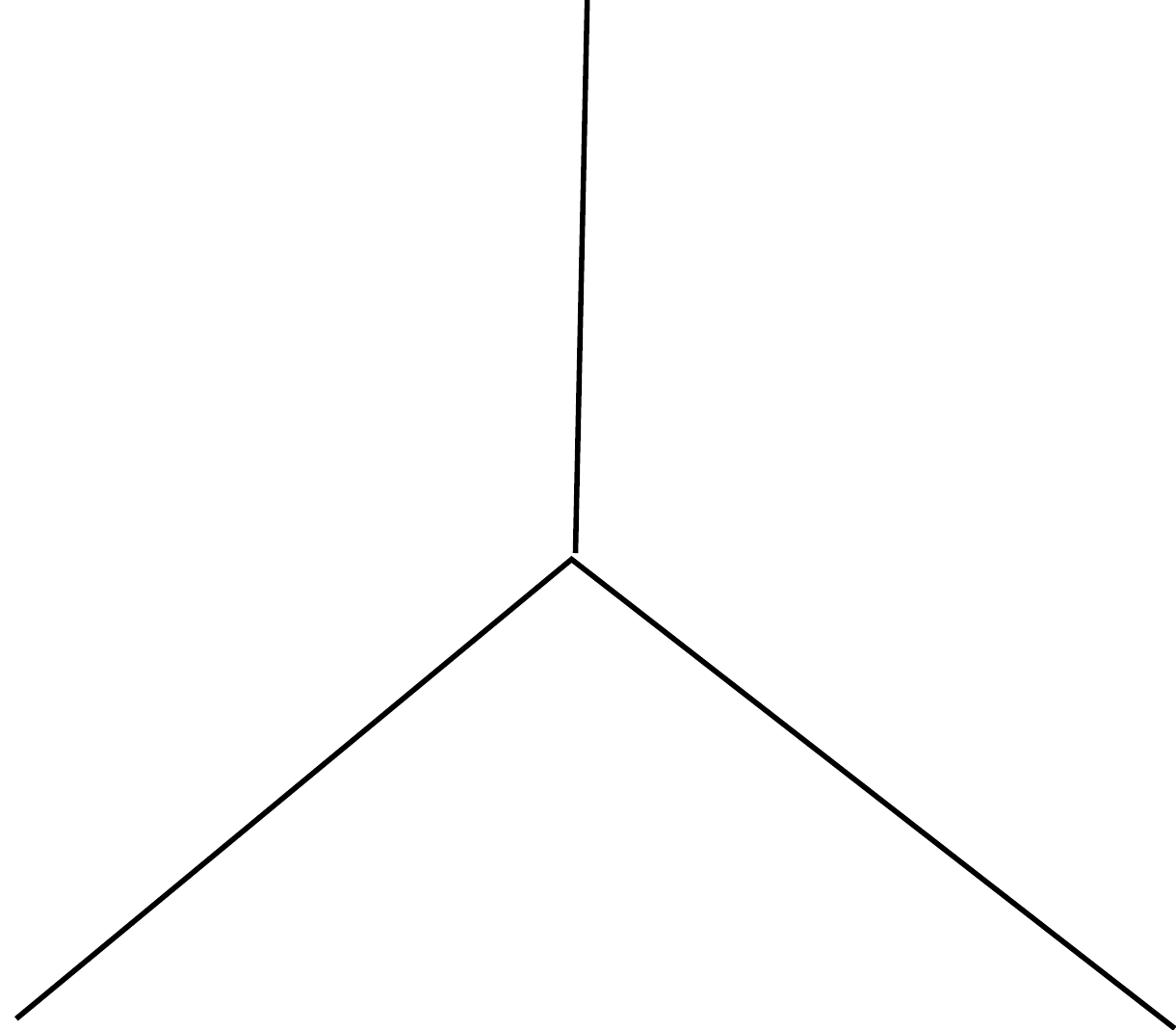}{1cm}
\put(-14,9){$a$}
\put(-30,-5){$c$}
\put(-7,-5){$b$} \:
\right \rangle^{\mathrm{U}}.
\end{equation}
\end{corollary}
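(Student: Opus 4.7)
The plan is to apply the shadow-state formula of Theorem~\ref{teo:shadowstate} to both sides of~\eqref{eq:triangleformula}. Choose a planar diagram $D$ of $\Gamma$ whose blackboard framing matches and in which the flat triangular disk bounded by $d, e, f$ appears as a bounded triangular face. Let $D'$ be the diagram obtained from $D$ by collapsing this triangle to a single trivalent vertex where $a, b, c$ meet. The diagrams $D$ and $D'$ coincide outside a small neighborhood of the triangular face; relative to $D'$, the diagram $D$ carries three extra edges $d, e, f$, three extra trivalent vertices (the corners of the triangle), and one extra bounded region, the triangle's interior, of Euler characteristic $1$.

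Fix an admissible shadow-state $s'$ on the regions of $D'$, and let $x, y, z$ denote its values on the three outer regions adjacent to the triangle of $D$. The shadow states of $D$ extending $s'$ are indexed by admissible half-integers $u$ (the state of the triangle's interior), and by Theorem~\ref{teo:shadowstate} their total contribution is
\begin{equation*}
W(s')\,\sum_u (-1)^{2u}[2u+1]\, T_1(u) T_2(u) T_3(u),
\end{equation*}
where $W(s')$ collects the factors coming from outside the triangular neighborhood and each $T_i(u)$ is the symmetric $6j$-symbol of Definition~\ref{ex:tet} evaluated at the $i$-th corner of the triangle (depending on $u$, two of $x,y,z$, two of $d,e,f$, and one of $a,b,c$). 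On the right-hand side of \eqref{eq:triangleformula}, the contribution of $s'$ equals $W(s')\cdot\mathrm{Tet}^{\mathrm{U}}(a,b,c,d,e,f)\cdot \mathrm{Tet}^{\mathrm{U}}(a,b,c;x,y,z)$, where the two factors are, respectively, the tetrahedron symbol on the right of \eqref{eq:triangleformula} and the $6j$-weight at the collapsed vertex. Summing over $s'$, the triangle formula reduces to the universal algebraic identity
\begin{equation*}
\sum_u (-1)^{2u}[2u+1]\, T_1(u) T_2(u) T_3(u)\;=\;\mathrm{Tet}^{\mathrm{U}}(a,b,c,d,e,f)\cdot \mathrm{Tet}^{\mathrm{U}}(a,b,c;x,y,z),
\end{equation*}
to be established for all admissible $x, y, z$.

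This last identity is the classical triangular recoupling relation for symmetric $6j$-symbols. It can be verified either by direct computation from the explicit formula \eqref{eq:tet} (see \cite{KL, MV}), or conceptually by a skein-theoretic argument: expanding the triangle in terms of Jones--Wenzl projectors and applying the orthogonality/bubble relations of the Temperley--Lieb algebra collapses the triangle into a single vertex at the cost of the $6j$-factor $\mathrm{Tet}^{\mathrm{U}}(a,b,c,d,e,f)$. The main obstacle is this recoupling identity itself; the reduction to it via shadow states is essentially bookkeeping, whose only delicate point is that the Euler-characteristic-$1$ weight of the interior region, the $\I$-powers of Definition~\ref{ex:tet}, and the three corner $6j$-symbols combine in exactly the right way to match the classical identity. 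Once this is granted, the triangle formula follows immediately by summing over the outer state $s'$.
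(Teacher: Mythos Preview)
Your proof is correct and faithfully expands the paper's one-line hint: the paper merely says the corollary is ``a simple application of the shadow state formula'' and cites \cite{MV}, without writing anything further. Your reduction via the shadow-state sum to the Biedenharn--Elliott (pentagon) identity for unitary $6j$-symbols is precisely such an application, and the identity you isolate is the classical recoupling relation, so there is no gap.
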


\section{Recursion relations associated to faces of trivalent graphs} \label{sec:recursions}

In \cite{GaLe}, it was proved that the sequence of quantum spin networks associated to a link (better known as its \emph{colored Jones polynomials}) is \emph{holonomic}, i.e. satisfies a set of recurrence relations. More explicitly, if $k\subset \mathbb{S}^3$ is a knot, let $J_n=\brkauf{k}{\frac{n-1}{2}}$ be the sequence of its colored Jones polynomials and extend it to a map $J:\mathbb{Z}\to \mathbb{C}[A^{\pm 1}]$ by setting $J_1=1,\ J_{0}=0,\ J_{-n}=-J_{n}$. Let $S=\{f:\mz\to \mathbb{C}[A^{\pm 1}]\}$ and let $L,M:S\to S$ be defined as $L(f)(n)=f(n+1)$ and $M(f)(n)=A^{2n}f(n)$.
The following was proved in \cite{GaLe}:
\begin{theorem}
There exists a polynomial $Q_A(L,M)$ depending on $k$ such that $$Q_A(L,M)(J)=0.$$
\end{theorem}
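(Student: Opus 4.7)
The plan is to realise $J_n$ as a finite $q$-proper-hypergeometric multisum and then apply the $q$-analog of the Wilf--Zeilberger / Zeilberger creative telescoping theorem, which produces a nonzero annihilating element in the quantum Weyl algebra
$\mathcal{A}_A := \mathbb{C}[A^{\pm 1}]\langle L,M\rangle/(LM - A^2 ML)$.
(The relation $LM = A^2 ML$ is immediate from $L(f)(n) = f(n+1)$ and $M(f)(n) = A^{2n}f(n)$.)

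First, I would fix a planar diagram $D$ of $k$ and use the shadow-state formula of Theorem \ref{teo:shadowstate} (equivalently, an $R$-matrix state sum) to write
\[ J_n(A) = \sum_{\vec{s}\in S_n} F(n,\vec{s};A), \]
where $\vec{s}$ ranges over a finite set $S_n \subset (\tfrac{1}{2}\mathbb{Z})^d$ cut out by the affine-linear admissibility inequalities of Definition \ref{def:admcol} whose constant terms are linear in $n$, and $F(n,\vec{s};A)$ is a product of quantum factorials $[m]!^{\pm 1}$ times a monomial in $A$ with exponent quadratic in $(n,\vec{s})$ (the quadratic contribution coming from framings and from the crossing correction of Definition \ref{ex:crossedtet}). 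A direct check using $[m+1]! = [m+1]\cdot [m]!$ then shows that $F$ is \emph{$q$-proper-hypergeometric} in Wilf--Zeilberger's sense: each shift ratio $F(n+1,\vec{s})/F(n,\vec{s})$ and $F(n,\vec{s}+e_j)/F(n,\vec{s})$ is a rational function of $q^n$ and $q^{s_\ell}$, where $q = A^2$.

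The key step is then to invoke the $q$-WZ existence theorem, which asserts that a $q$-proper-hypergeometric summand admits a certificate identity
\[ P(A,L,M)\cdot F(n,\vec{s};A) \;=\; \sum_{j=1}^{d} (S_j - 1)\bigl(R_j(A,L,M,S_1,\dots,S_d)\cdot F\bigr), \]
for some nonzero $P \in \mathcal{A}_A$ and auxiliary operators $R_j$ (here $S_j$ denotes the shift $s_j \mapsto s_j+1$). Extending $F$ by zero outside $S_n$ and summing over $\vec{s}\in\mathbb{Z}^d$, the right-hand side telescopes to $0$, yielding $P(A,L,M)(J) = 0$. Set $Q_A := P$.

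The main obstacle is controlling the telescoping at the boundary of the $n$-dependent summation polytope $S_n$: one must verify that each $R_j \cdot F$ still vanishes sufficiently far outside $S_n$ so that the telescoped sum is genuinely zero, rather than zero up to boundary defects. The formal vanishing of $1/[m]!$ at negative $m$ takes care of the interior factorial constraints, but the linear $n$-dependence of the admissibility inequalities requires careful bookkeeping of the supports of the $R_j \cdot F$. This is precisely the technical heart of the proper-hypergeometric hypothesis in the WZ framework, and is what guarantees termination of the $q$-Zeilberger algorithm on our summand.
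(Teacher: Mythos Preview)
The paper does not give its own proof of this statement: it is quoted from Garoufalidis--L\^e \cite{GaLe}, and immediately afterwards the authors write ``we refer to \cite{GaLe} for full details''. So there is no in-paper proof to compare against.

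That said, your outline is essentially the Garoufalidis--L\^e argument. They, too, realise $J_n$ as a $q$-proper-hypergeometric multisum (coming from an $R$-matrix state sum, which is equivalent to the shadow-state sum you invoke) and then appeal to the $q$-analogue of Wilf--Zeilberger/Zeilberger to produce a nonzero annihilator in the $q$-Weyl algebra. Your identification of the boundary-telescoping issue as the main technical point is accurate: in \cite{GaLe} this is handled by packaging the summand so that the factorial denominators $[m]!$ with $m<0$ enforce vanishing outside the admissibility polytope, making the sum genuinely ``natural-boundary'' in the WZ sense. One small caveat: for a knot there are no trivalent vertices, so the shadow-state weights consist only of region and crossing factors (unknots and crossed tetrahedra in the notation of Section~\ref{sec:shadowstates}); the uncrossed $6j$-symbols of Definition~\ref{ex:tet} do not appear. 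This does not affect the argument, since the crossed-tetrahedron weight is still a product of $q$-factorials times a quadratic power of $A$, hence $q$-proper-hypergeometric.
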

(Actually the above statement is weaker than the original one: we refer to \cite{GaLe} for full details).
 
Even if the techniques of the above theorem may be adapted to prove similar statements for general colored KTG invariants, for example
along the lines of \cite{DGV}, it is difficult to make this explicit in general.
In this section, we shall consider the case when $\Ga$ is the $1$-skeleton of a polyhedron $P$ and, given the embedded cycle $\partial D\subset \Ga$ formed by the boundary of a face of $P$, we will provide a family of instances of these recursion polynomials. (Note that, as there are now many ``colors'' given by the map $c:E\rightarrow \frac{\mathbb{N}}{2}$, we should also expect many recursions. Our technique can be easily generalised to any knotted trivalent graph $\Ga$ and any cycle in $\Gamma$ which is the boundary of a disc embedded in $S^3\setminus \Ga$, but we won't need this level of generality here.)

Let $\Ga$ be a KTG which is the 1-skeleton of a polyhedron $\P$. Let $\c:\E\to \frac{\mathbb{N}}{2}$ be an admissible coloring on the edges of $\Ga$. Let $D\subset \mathbb S^3\setminus \Ga$ be an oriented disc isotopic to a face of $\P$.  Fix an orientation of $D$, a basepoint $p_0\in \partial D$ and denote by $e_0,\ldots e_{p-1},e_p=e_0$ the edges contained in $\partial D$ (numbered by walking along $\partial D$ in the clockwise direction starting from $p_0$). Let $a_i,b_i,  i=0\ldots p-1$ be the colors respectively of $e_i$ and of the edge of $\Ga$ sharing a vertex with $e_i,e_{i+1} (\mathrm{mod} \; p)$ (see Figure \ref{fig:notation}). 
\begin{figure}
$\epsh{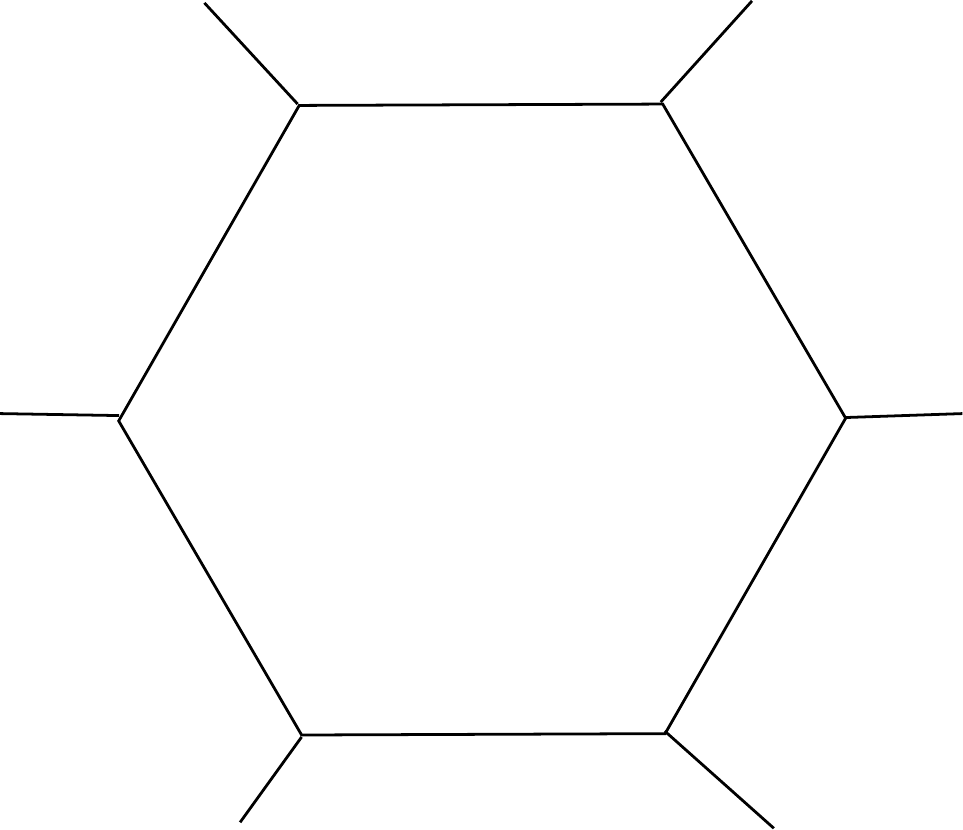}{20ex}
\put(-50,-36){$a_0$}
\put(-20,-20){$a_1$}
\put(-25,-35){$b_0$}
\put(-22,20){$a_2$}
\put(-10,06){$b_1$}
\put(-32,45){$b_2$}
\put(-78,45){$b_3$}
\put(-110,00){$b_4$}
\put(-85,-40){$b_5$}
\put(-50,38){$a_3$}
\put(-90,-15){$a_5$}
\put(-90,20){$a_4$}
\put(-50,0){$D$}$
\caption{Some of the notation for Proposition \ref{prop:geometricrecursion}.}\label{fig:notation}
\end{figure}

\begin{definition}[Admissible perturbations]\label{def:admperturb}
An \emph{admissible perturbation} of the coloring $c$ of $\Gamma$ around $D$ is a map $\delta:\{e_0,\ldots e_{p-1}\}\to \{\pm \frac{1}{2}\}$ such that the coloring $c+\delta$ (defined to be equal to $c$ on the edges outside $\partial D$ and to be $e_i\mapsto a_i+\delta(e_i),\ i=0,\ldots p-1$ on $\partial D$) is admissible.
\end{definition}
We consider the following proposition as a ``quantum version'' of  Proposition \ref{prop:circlegeometric}:
\begin{proposition}[Circle recursions]\label{prop:geometricrecursion}
For any pair $(\delta(e_0),\delta(e_{p-1}))\in \{\pm\frac{1}{2}\}^2$, the following holds:
\begin{equation} \label{eq:recursion0} 
\sum_{\delta} k(\c,\delta) \brunit{\Ga}{c+\delta} = Z(\c,\delta(e_0),\delta(e_{p-1}))\brunit{ \Ga}{c}
\end{equation} 
where $\delta$ ranges over all admissible perturbations of $c$ around $D$ whose value on $e_0$ and $e_{p-1}$ are respectively $\delta(e_0), \delta(e_{p-1})$, and the functions $Z(c,\delta(e_0),\delta(e_{p-1}))$ and $k(c,\delta) $ are defined as follows (here to keep the notation light we shall write $\delta_i$ for $\delta(e_i)$ and 
$a'_i$ for $a_i+\delta_i$). Define $Z(c,\delta(e_0),\delta(e_{p-1})) = \QM(p-1)$ and $k(c,\delta)= \prod_{i=0}^{p-2} \QM(i)$ in terms of the quotient $\QM$:
\begin{equation}\label{eq:QM}
\QM(j):=\frac{\smalltetra{1.2cm}\put(-27,8){$a_j$}\put(-31,-1){$b_j$}\put(-12,-1){$a_{j+1}$}\put(-26,-10){$a'_{j+1}$}\put(-2,10){$\frac{1}{2}$}\put(-43,9){$a'_j$}}{\smalltetra{1.2cm}\put(-27,8){$a'_j$}\put(-31,-1){$b_j$}\put(-12,-1){$a'_{j+1}$}\put(-26,-10){$a'_{j+1}$}\put(-2,10){$0$}\put(-43,9){$a'_j$}}  \ .
\end{equation}
\end{proposition}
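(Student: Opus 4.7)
\medskip
\noindent\textbf{Proof plan.} The strategy is to imitate, at the quantum level, the geometric argument of Proposition \ref{prop:circlegeometric}: one transports a $\tfrac12$-colored auxiliary arc once around the face $D$, accumulating the factors $\QM(j)$ as quantum counterparts of the isometries $MC(\ell_j)$. Concretely, fix $\delta_0,\delta_{p-1}\in\{\pm\tfrac12\}$ and set $a'_i:=a_i+\delta_i$. Form an auxiliary colored KTG $(\Ga',c')$ from $(\Ga,c)$ by inserting into the interior of $D$ a $\tfrac12$-colored arc $\alpha$ whose two endpoints lie on $e_0$ and $e_{p-1}$ near their common vertex $v_{p-1}$, splitting these edges into consecutive segments of colors $(a_0,a'_0)$ and $(a_{p-1},a'_{p-1})$ respectively. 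The plan is then to evaluate $\brunit{\Ga'}{c'}$ in two different ways via iterated use of the triangle formula \eqref{eq:triangleformula}, and to equate the two results.

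For the first evaluation, isotope $\alpha$ so that, together with the short inner segments of $e_0$ and $e_{p-1}$, it bounds a small planar triangle adjacent to $v_{p-1}$. The three external colors of this triangle are $(a_0,b_{p-1},a_{p-1})$, which are exactly the original colors at $v_{p-1}$, so \eqref{eq:triangleformula} contracts the triangle back to the original graph $(\Ga,c)$, with the $6j$-symbol in the numerator of $\QM(p-1)$ emerging as the prefactor. Dividing by the bubble normalization built into the definition of $\Ga'$ --- which, by Remark \ref{rem:tet0}, is the reciprocal of precisely the $6j$-symbol with a $0$-colored edge appearing in the denominator of \eqref{eq:QM} --- yields $\brunit{\Ga'}{c'}=\QM(p-1)\,\brunit{\Ga}{c}$. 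For the second evaluation, slide the endpoint of $\alpha$ on $e_0$ all the way around $\partial D$, passing the vertices $v_0,v_1,\dots,v_{p-2}$ in succession; at each passage by $v_j$ one forms a small triangle and applies \eqref{eq:triangleformula}, extracting a factor $\QM(j)$ and a free sum over $\delta_{j+1}\in\{\pm\tfrac12\}$. When $\alpha$ finally reaches $e_{p-1}$, matching with the pre-existing splitting there forces $\delta_{p-1}$ to take its prescribed value, yielding no further sum; the outcome is $\brunit{\Ga'}{c'}=\sum_\delta\bigl(\prod_{j=0}^{p-2}\QM(j)\bigr)\brunit{\Ga}{c+\delta}$, where the sum is over all admissible $\delta$ satisfying the prescribed boundary conditions $(\delta_0,\delta_{p-1})$. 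Equating the two expressions yields the recursion.

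The main obstacle will be verifying that each local factor extracted from a triangle contraction is exactly $\QM(j)$, including the sign factors $\I^{\bullet}$ and framing coefficients $A^{\bullet}$, rather than just the unnormalized $6j$-symbol. The appearance of a $6j$-symbol with a $0$-colored edge in the denominator of \eqref{eq:QM} encodes the bubble-normalization needed to attach (and then detach) $\alpha$ from $\Ga$, and accounting for it consistently through each of the $p$ local steps is the heart of the computation. A secondary check is that the admissibility conditions on the perturbed colorings $c+\delta$ line up precisely with the nonvanishing of the corresponding $6j$-symbols, so that the sum on the right has exactly the prescribed range.
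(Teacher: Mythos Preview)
Your approach is correct in outline but takes a genuinely different route from the paper. You propose to evaluate an auxiliary graph $\Gamma'$ in two ways by \emph{local recoupling}: once by contracting the triangle at $v_{p-1}$ directly, and once by sliding the $\tfrac12$-colored arc around $\partial D$ using an F-move (fusion identity) at each vertex $v_j$, thereby picking up a factor and a sum over $\delta_{j+1}$. The paper, by contrast, makes a single \emph{global} comparison: it writes $\langle\Gamma'\rangle^{\mathrm U}$ via the shadow-state formula \eqref{eq:shadowformula} with the small triangle $r_0$ as the distinguished region, observes that the admissible shadow-states on $r'_0$ and its neighbours are exactly parametrized by admissible perturbations $\delta$, and then compares the weight $w'(s)$ of each state in this sum to the weight $w(s)$ of the \emph{same} state in the shadow-state formula for $\langle\Gamma,c+\delta\rangle^{\mathrm U}$ (now with $D$ as the distinguished region). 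The ratio $w'(s)/w(s)$ is computed vertex by vertex along $\partial D$ and seen to be $Z'\,k(c,\delta)$, independent of $s$; summing over $s$ yields the identity. This buys the paper the ratio form $\QM(j)$ essentially for free, since numerator and denominator arise as the two vertex-weights at $v_j$ in the $\Gamma'$- and $\Gamma$-state-sums respectively.

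Two points in your proposal deserve tightening. First, equation \eqref{eq:triangleformula} as stated only \emph{contracts} a triangle and produces no sum; the ``sliding past $v_j$'' step you describe is the recoupling (F-)move, which in effect applies \eqref{eq:triangleformula} in one direction together with a bubble/completeness identity in the other. You should say this explicitly rather than invoking \eqref{eq:triangleformula} alone. Second, the phrase ``bubble normalization built into the definition of $\Gamma'$'' is not quite right: $\Gamma'$ carries no normalization, and the triangle contraction yields just the \emph{numerator} of $\QM(p-1)$ times $\langle\Gamma,c\rangle^{\mathrm U}$. The denominator of each $\QM(j)$ (a $6j$-symbol with a $0$-colored edge, hence by Remark~\ref{rem:tet0} equal to $\I^{2a'_j+2a'_{j+1}}/\sqrt{[2a'_j+1][2a'_{j+1}+1]}$) does not come from a normalization of $\Gamma'$; in your picture it must emerge from the precise form of the F-move coefficients and the final bubble removal on $e_{p-1}$. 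Tracking these extra factors and showing they assemble into the quotients $\QM(j)$ is exactly the computation you flag as the ``main obstacle'', and it is not automatic --- in the paper's shadow-state argument it is immediate because $\QM(j)$ is literally defined as the ratio of the two relevant vertex-weights.
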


\begin{proof}
Let $\Ga'=\Ga'(\delta_0,\delta_{p-1})$ be the colored planar graph obtained by modifying $\Ga$ around $e_0\cap e_{p-1}$ as follows:
\begin{equation}
\epsh{aftertrianglecontraction.pdf}{1.5cm}
\put(-23,12){$b_{p-1}$}
\put(-46,-8){$a_0$}
\put(-12,-8){$a_{p-1}$} 
\put(-29,-15){$D$}  
\ \ \longrightarrow \ \  
\epsh{beforetrianglecontraction.pdf}{1.5cm}
\put(-23,14){$b_{p-1}$}
\put(-51,-14){$a_0$}
\put(-4,-15){$a_{p-1}$}
\put(-15,0){$a'_{p-1}$}
\put(-42,0){$a'_0$}
\put(-29,-17){\Small{$1/2$}}  
\end{equation}
where in the picture both the framings of $\Ga$ and $\Ga'$ are supposed to be the blackboard framings.  We will call $r_0$ the region bounded by the small triangle created by the move, and let $r'_0$ be its complement in $D$. 

The idea of the proof is to express $\langle\Ga'\rangle^{\mathrm{U}}$ in terms of $\brunit{\Ga}{c+\delta}$ two ways.
First directly using the triangle formula 
\eqref{eq:triangleformula}. Second by applying the shadow state formula \eqref{eq:shadowformula} with region $r_0$ having the role of the ``unbounded" region
and comparing it to the similar state sum for various colorings of $\langle\Ga\rangle$.

If we multiply \eqref{eq:recursion0} by the denominator $Z'$ of $Z$ then
the right hand side becomes
\[
\smalltetra{1.2cm}\put(-27,8){$a'_{0}$}\put(-33,-1){$a'_{p-1}$}\put(-12,-1){$b_{p-1}$}\put(-26,-11){$a_{p-1}$}\put(-2,10){$a_{0}$}\put(-40,9){$\frac{1}{2}$}\hspace{10pt} \brunit{\Ga}{c} = \langle\Ga'\rangle^{\mathrm{U}},
\]
using \eqref{eq:triangleformula}. 
The proof is finished once we show that the left hand side of the same equation equals the expression of
$\langle\Ga'\rangle^{\mathrm{U}}$ in terms of the shadow state formula with distinguished region $r_0$. In other words we compute $\langle \Ga'\rangle^{\mathrm{U}}$ by stipulating the triangular region $r_0$ to be the one carrying shadow state $0$. Let $R_{\Gamma'}$ be the set of all regions of $\Gamma'$. 
The region $r'_0$ is a disc and its only admissible shadow state is $\frac{1}{2}$. Moreover in a neighborhood of $r'_0$ we will see one region $r^*_i\in R_{\Gamma'}$ for each edge $e_i$ ($\partial r^*_i$ contains $e_i$, and $r^*_i\cap r_0=\emptyset$ unless $i=p-1$ or $i=0$, in which case the intersection is one of the edges of $r_0$, colored respectively by $a'_{p-1}$ and by $a'_0$). 

We have $\langle\Ga'\rangle^{\mathrm{U}}=\sum_{\text{shadow states}} w'(s)$  where $w'(s)$ is the weight of the shadow state $s:R_{\Gamma'}\rightarrow \frac{\mathbb{N}}{2}$ for $\Ga'$. Observe that the only admissible shadow-states of $r^*_i$ are $a'_i=a_i+\delta_i$ with $\delta$ admissible (as in the statement). So we may also write:
$$\langle\Ga'\rangle^{\mathrm{U}}=\sum_{\delta}\sum_{s\in S(\delta)} w'(s)=\sum_{\delta} w'(\delta)$$
where for each perturbation $\delta$ we let $S(\delta)$ be the set of all the shadow-states whose values on regions $r^*_i$ are $a'_i=a_i+\delta_i$ for all $i\in \{0,\ldots p-1\}$ and  $w'(\delta):=\sum_{s\in S(\delta)} w'(s)$. We claim that $w'(\delta)= \brunit{\Ga}{\c+\delta} k(c,\delta) Z'$ for all admissible~$\delta$, which (by summing over $\delta$) will finish the proof, since $Z'$ is independent of $\delta$.

Indeed, fix $\delta$. By the shadow-state formula \eqref{eq:shadowformula} 
applied to $\Ga$ with the region $D$ colored by $0$, one sees that $\brunit{\Ga}{c+\delta}= \sum_{s\in S(\delta)} w(s)$ is a state-sum over the \emph{same} set $S(\delta)$ of colorings (of $R_\Gamma \smallsetminus \{D\} \simeq R_{\Gamma'} \smallsetminus \{r_0, r'_0\}$) as $\langle \Gamma' \rangle ^{\mathrm{U}}$ is  --- the colorings being extended to be $0$ on $D$ instead of $\frac{1}{2}$ on $r'_0$ and $0$ on $r_0$. Thus the only difference is in the weights associated to $r'_0$ and to the vertices contained in $\partial D$. 
More explicitly we claim that for any $s\in S(\delta)$ one has 
$\frac{w'(s)}{w(s)}=k(c,\delta) Z'$ (independent of $s$).
Indeed the weights differ only by the vertices touching $D$ and by the presence of the weight of the region $r'_0$ (which is $-[2]$):
$$w'(s)=-[2]\ \ \ 
\smalltetra{1.2cm}
\put(-27,8){$\frac{1}{2}$}
\put(-33,-3){$a'_{0}$}
\put(-12,-1){$a_{0}$}
\put(-22,-13){$a'_{0}$}
\put(-2,10){$\frac{1}{2}$}
\put(-38,9){$0$}\ \ \ \ \ \ 
\smalltetra{1.2cm}
\put(-26,8){$\frac{1}{2}$}
\put(-39,-3){$a'_{p-1}$}
\put(-12,-1){$a_{p-1}$}
\put(-26,-13){$a'_{p-1}$}
\put(-2,10){$\frac{1}{2}$}
\put(-39,9){$0$} \ \ \ \ \ \ \ 
\smalltetra{1.2cm}
\put(-24,8){$0$}
\put(-39,-3){$a'_{p-1}$}
\put(-12,-1){$a'_{p-1}$}
\put(-26,-13){$b_{p-1}$}
\put(-2,10){$a'_0$}
\put(-43,9){$a'_0$} \ \ \ \ 
\prod_{i=0}^{p-2}\  \  
\smalltetra{1.2cm}
\put(-27,8){$a_i$}
\put(-31,-1){$b_i$}
\put(-12,-1){$a_{i+1}$}
\put(-26,-10){$a'_{i+1}$}
\put(-2,10){$\frac{1}{2}$}
\put(-43,9){$a'_i$} 
\hspace{4pt} \times [\text{off-$D$ terms}]$$
$$w(s)= \ \ 
\smalltetra{1.2cm}
\put(-23,8){$0$}
\put(-39,-3){$a'_{p-1}$}
\put(-12,-1){$a'_{p-1}$}
\put(-26,-13){$b_{p-1}$}
\put(-2,10){$a'_0$}
\put(-43,9){$a'_0$} \ \ \ \ \ 
\prod_{i=0}^{p-2} \ \  \ 
\smalltetra{1.2cm}
\put(-27,8){$a'_i$}
\put(-31,-1){$b_i$}
\put(-12,-1){$a'_{i+1}$}
\put(-26,-10){$a'_{i+1}$}
\put(-2,10){$0$}
\put(-43,9){$a'_i$}\ \ \ 
\times [\text{off-$D$ terms}].$$  
Taking the ratio (all the symbols represent nonzero functions in $\Delta\Hol(A)$) and using Remark \ref{rem:tet0}, we get:
\begin{eqnarray*}
\frac{w'(s)}{w(s)}&=& -[2]\frac{
\smalltetra{1.2cm}
\put(-23,8){$\frac{1}{2}$}
\put(-32,-3){$a'_{0}$}
\put(-12,-1){$a_{0}$}
\put(-23,-13){$a'_{0}$}
\put(-2,10){$\frac{1}{2}$}
\put(-38,9){$0$}\ \ \ \ \ 
\smalltetra{1.2cm}
\put(-23,8){$\frac{1}{2}$}
\put(-39,-3){$a'_{p-1}$}
\put(-12,-1){$a_{p-1}$}
\put(-26,-13){$a'_{p-1}$}
\put(-2,10){$\frac{1}{2}$}
\put(-38,9){$0$}\ \ \ \ \ \  
\smalltetra{1.2cm}
\put(-23,8){$0$}
\put(-39,-3){$a'_{p-1}$}
\put(-12,-1){$a'_{p-1}$}
\put(-26,-13){$b_{p-1}$}
\put(-2,10){$a'_0$}
\put(-43,9){$a'_0$} }
{\smalltetra{1.2cm}
\put(-23,8){$0$}
\put(-39,-3){$a'_{p-1}$}
\put(-12,-1){$a'_{p-1}$}
\put(-26,-13){$b_{p-1}$}
\put(-2,10){$a'_0$}
\put(-43,9){$a'_0$} }\ \ \ 
\prod_{i=0}^{p-2} \ \  \ \frac{ 
\smalltetra{1.2cm}
\put(-27,8){$a_i$}
\put(-31,-1){$b_i$}
\put(-12,-1){$a_{i+1}$}
\put(-26,-10){$a'_{i+1}$}
\put(-2,10){$\frac{1}{2}$}
\put(-43,9){$a'_i$}}
{\smalltetra{1.2cm}
\put(-27,8){$a'_i$}
\put(-31,-1){$b_i$}
\put(-12,-1){$a'_{i+1}$}
\put(-26,-10){$a'_{i+1}$}
\put(-2,10){$0$}
\put(-43,9){$a'_i$}}
\\ &=& Z' k(\c,\delta)
\end{eqnarray*}
by definition of $Z'$ and $k(\c,\delta)$. Multiplying by $w(s)$ and summing over $s\in S(\delta)$ gives the desired identity.
\end{proof}

To illustrate our recursions consider the simplest case of a triangular face $p=3$.
We recover the well-known three-term recursion for the $6j$-symbol named after Gordon and Schulten \cite{SG}.
As the proof shows, the Gordon-Schulten recursion factorizes into two instances of the 
circle recursion, suggesting that the circle recursion is more fundamental.

\begin{corollary}[Gordon-Schulten recursion]
Let $e_0,e_1,e_2$ be a triangular face of $\Ga$ and define the edge coloring $1_i$ by $1_i(e_j) = \delta_{i,j}$.
Also define $\delta_\pm=\frac{1}{2}(1_0\pm1_1+1_2)$.
\[
\sum_\pm\frac{k(c,\delta_\pm)k(c+\delta_\pm,-\delta_\mp)}{Z(c+\delta_\pm,\frac{-1}{2},\frac{-1}{2})}\brunit{\Ga}{c\pm 1_1} =
\]
\[ 
\left(-Z(c,\frac{1}{2},\frac{1}{2})+\sum_\pm \frac{k(c,\delta_\pm)k(c+\delta_\pm,-\delta_\pm)}{Z(c+\delta_\pm,\frac{-1}{2},\frac{-1}{2})} \right)\brunit{\Ga}{c}
\]
\end{corollary}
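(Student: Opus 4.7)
The plan is to derive the three-term Gordon--Schulten recursion by combining three instances of the circle recursion (Proposition \ref{prop:geometricrecursion}) associated to the triangular face $D$. For a triangle $p=3$, so the boundary pair fixed by each instance of the recursion is $(\delta(e_0),\delta(e_2))$ and $e_1$ is the single edge summed over; the two admissible perturbations with boundary $(+\tfrac{1}{2},+\tfrac{1}{2})$ are exactly the $\delta_+$ and $\delta_-$ of the statement.

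First I would apply the circle recursion at $c$ with boundary $(+\tfrac{1}{2},+\tfrac{1}{2})$, which produces the two-term identity
$$k(c,\delta_+)\,\brunit{\Ga}{c+\delta_+} + k(c,\delta_-)\,\brunit{\Ga}{c+\delta_-} \;=\; Z\bigl(c,\tfrac{1}{2},\tfrac{1}{2}\bigr)\, \brunit{\Ga}{c}. \qquad (\star)$$
Then I would apply the recursion twice more, once at $c+\delta_+$ and once at $c+\delta_-$, each time with the \emph{opposite} boundary $(-\tfrac{1}{2},-\tfrac{1}{2})$. The two admissible sub-perturbations in both cases are $-\delta_+$ and $-\delta_-$, and the key algebraic observation
$$c+\delta_\pm - \delta_\pm = c, \qquad c+\delta_\pm - \delta_\mp = c \pm 1_1$$
lets me solve each of these recursions for $\brunit{\Ga}{c+\delta_\pm}$ as an explicit linear combination of $\brunit{\Ga}{c}$ and $\brunit{\Ga}{c\pm 1_1}$, with common denominator $Z(c+\delta_\pm,-\tfrac{1}{2},-\tfrac{1}{2})$ and numerators $k(c+\delta_\pm,-\delta_\pm)$ and $k(c+\delta_\pm,-\delta_\mp)$ respectively.

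Substituting these two expressions back into $(\star)$ and collecting terms produces the stated identity directly: the coefficient of $\brunit{\Ga}{c\pm 1_1}$ becomes $k(c,\delta_\pm)\,k(c+\delta_\pm,-\delta_\mp)/Z(c+\delta_\pm,-\tfrac{1}{2},-\tfrac{1}{2})$, while the $\brunit{\Ga}{c}$ terms from both substitutions combine, together with $Z(c,\tfrac{1}{2},\tfrac{1}{2})\brunit{\Ga}{c}$ moved to the right, to form the parenthesized coefficient on the right-hand side. This explains the paper's remark that the Gordon--Schulten recursion \emph{factorizes} through the circle recursion: it is, up to bookkeeping, nothing more than $(\star)$ fed into two instances of the reverse-direction recursion.

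The main and essentially only technical issue is that treating the second pair of recursions as \emph{solutions} for $\brunit{\Ga}{c+\delta_\pm}$ requires the denominators $Z(c+\delta_\pm,-\tfrac{1}{2},-\tfrac{1}{2})$ to be nonzero in $\Delta\Hol(A)$. By the definition of $Z$ in \eqref{eq:QM} these are single normalized tetrahedron-symbols, which are generically nonvanishing; to avoid any case analysis one may simply clear denominators at the outset and phrase the whole derivation as a polynomial identity, equivalent to the stated form after dividing through by $Z(c+\delta_+,-\tfrac{1}{2},-\tfrac{1}{2})Z(c+\delta_-,-\tfrac{1}{2},-\tfrac{1}{2})$.
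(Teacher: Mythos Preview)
Your proposal is correct and follows exactly the paper's own argument: apply the circle recursion at $c$ with boundary $(\tfrac12,\tfrac12)$, then apply it at each shifted coloring $c+\delta_\pm$ with boundary $(-\tfrac12,-\tfrac12)$ to express $\brunit{\Ga}{c+\delta_\pm}$ in terms of $\brunit{\Ga}{c}$ and $\brunit{\Ga}{c\pm 1_1}$, and substitute back. Your added remark about the nonvanishing of the $Z$ denominators (and the option of clearing them) is a sensible technical point that the paper leaves implicit.
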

\begin{proof}
For $\epsilon \in \{-1,1\}$ the only two admissible perturbations $\delta$ with $\delta(e_0)= \delta(e_2)=\frac{\epsilon}{2}$ 
are $\epsilon\delta_\pm$. Hence the circle recursion reads:
\[
\sum_{\pm}k(c,\epsilon\delta_\pm)\brunit{\Ga}{c+\epsilon\delta_\pm} = Z(c,\frac{\epsilon}{2},\frac{\epsilon}{2})\brunit{\Ga}{c}.
\]
Applying the recursion for $\epsilon = -1$ to both terms on the left hand side of the recursion for $\epsilon=1$
yields the result.
\end{proof}

The coefficients of the recursion may be written out explicitly using the formulae in the next section
but as we make no further use of this example we have not printed them here. See for example \cite{TaWo},
Prop. 4.1.1.

\section{Proof of Theorem \ref{teo:main}} \label{sec:mainproof}
Let us fix the notation for all this section. 
Let $\Ga$ be the $1$-skeleton of a hyperbolic, hyperideal polyhedron $\P$ and let $\gamma_i:E(\P)\to (0,\pi]$
be the exterior dihedral angles of $\P$. 
Let also $\lambda\subset \Ga$ be an embedded oriented curve forming the boundary of a face $D$ of $\P$ and $\lambda^{\mathrm{trunc}}$ be the associated curve in $\Ptr$,  $e_0,e_1,\ldots e_{p-1}$ be the sequence of edges of $\Ga$ contained in $\lambda$ and $v_0,v_1,\ldots v_{2p-2},v_{2p-1}$ be the vertices of $\Ptr$ contained in $\lambda^{\mathrm{trunc}}$ so that $\overrightarrow{v_{2i}v_{2i+1}}\subset e_i$. 
 
We will consider a sequence $\c^{(n)}$ of $\mn$-valued colorings on $\Ga$ such that for each edge, $\lim_{n\to \infty} \frac{\c^{(n)}(e_i)}{n}=1- \frac{\gamma_i}{2\pi}$ (and so in particular $\c^{(n)}(e_i)\in [\frac{n}{2},n)$ definitely).
Finally we will let $a_i:=\c^{(n)}(e_i)$ (suppressing the $n$ from the notation with a little abuse) and let $b_i$ be the color of the edge of $\Ga$ sharing a vertex with $e_i$ and $e_{i+1}$ with respect to the coloring $\c^{(n)}$. Our goal is to relate the recurrence equations provided by Proposition \ref{prop:geometricrecursion} to the geometric equations associated by Proposition \ref{prop:circlegeometric} to $\lambda^{\rm{trunc}}$.

\subsection{Asymptotical behavior of recursion coefficients}\label{sub:asympanalysis}
Let us compute explicitly the values of the coefficients $k(c,\delta)$ of Proposition \ref{prop:geometricrecursion}. 
Using formula \eqref{eq:tet}, we get that $\QM(j)$ from \eqref{eq:QM} is a matrix whose rows and columns are indexed respectively by $\delta(e_j),\delta(e_{j+1})\in \{\pm \frac{1}{2}\}$, namely: 
\begin{equation*}%\label{eq:recursioncoeff}
\renewcommand\arraystretch{2}
\begin{array}{c|c|c} 
\QM(j)= & {\rm if }\delta_j=\frac{1}{2} & {\rm if } \delta_j=-\frac{1}{2}\\
\hline
{\rm if } \delta_{j+1}=\frac{1}{2} & \I^{-1}\sqrt{\frac{[1+a_j + a_{j+1} - b_j] [2+a_j + a_{j+1} + b_j]}{
 [2 + 2 a_j]  [2 + 2 a_{j+1}] }} &\I\sqrt{\frac{[1-a_j + a_{j+1} + b_j][-a_{j+1}+a_j+b_j]}{[ 2 a_j][2a_{j+1}+2]}}\\
\hline
{\rm if } \delta_{j+1}=-\frac{1}{2} & \I\sqrt{\frac{[1+a_j - a_{j+1} + b_j][a_{j+1}-a_j+b_j]}{[2a_j+2][2 a_{j+1}]}} &  \I\sqrt{\frac{[a_j + a_{j+1}- b_j][a_j+a_{j+1}+b_j+1]}{[2a_j][2a_{j+1}]}}
\end{array}\end{equation*}
where, as explained in Section \ref{sec:shadowstates}, we chose the square roots by stipulating that for all $k\in \mn$, $\sqrt{[k]}$ was $\sqrt{k}$ at $A=1$, and so by holomorphicity $\sqrt{[k]}$ is a positive real number on the arc $A=\exp(\I \theta)$ for $\theta\in [0,\frac{\pi}{k})$ and it is a positive real multiple of $-\I$ on $\theta \in (\frac{\pi}{k}, \frac{2\pi}{k})$. Equivalently, if $A=\exp(\frac{\I\pi}{2n})$ then $\sqrt{[k]}\in \mr_+$ if $k< n$ and it is a positive real multiple of $-\I$ if $k\in (n,2n)$.

Let us now compute the asymptotical behavior of the recursion coefficients found in 
the table above
%\eqref{eq:recursioncoeff} 
when we evaluate at $A=\exp(\frac{\I\pi}{2n})$. 
By hypothesis there exist angles $\gamma_j, \gamma_{j+1}, \beta_j\in [0,\pi)$ such that 
$$\lim_{n\to \infty} 2\pi (1-\frac{a_j}{n})=\gamma_j,\ \lim_{n\to \infty} 2\pi (1-\frac{a_{j+1}}{n})=\gamma_{j+1},\ {\rm and}\  \lim_{n\to \infty} 2\pi (1-\frac{b_j}{n})=\beta_j$$ (and so in particular $a_j,b_j,a_{j+1}\in [\frac{n}{2},n)$). Remark that since these angles satisfy Bonahon and Bao's conditions, one has $\beta_j+\gamma_j+\gamma_{j+1}>2\pi$, and replacing this to compute the square roots of the evaluation of the above quantum integers we get:
$$\ev_n\left(\sqrt{[2a_j]}\right)\sim\ev_n\left(\sqrt{[2+2a_j]}\right)\sim -\I \sqrt{\frac{\sin(\gamma_j)}{\sin(\frac{\pi}{n})}},$$
$$ \ev_n\left(\sqrt{[2a_{j+1}]}\right)\sim \ev_n\left(\sqrt{[2+2a_{j+1}]}\right)\sim -\I \sqrt{\frac{\sin(\gamma_{j+1})}{\sin(\frac{\pi}{n})}}$$
$$\ev_n\left(\sqrt{[a_j-a_{j+1}+b_j]}\right)\sim \ev_n\left(\sqrt{[1+a_j-a_{j+1}+b_j]}\right)\sim \sqrt{\frac{\sin(\frac{\gamma_j}{2}-\frac{\gamma_{j+1}}{2} +\frac{\beta_j}{2})}{\sin(\frac{\pi}{n})}}$$

$$\ev_n\left(\sqrt{[1+a_j+a_{j+1}+b_j]}\right)\sim\ev_n\left(\sqrt{[2+a_j+a_{j+1}+b_j]}\right)\sim -\I \sqrt{\frac{|\sin(\frac{\gamma_j}{2} +\frac{\gamma_{j+1}}{2} +\frac{\beta_j}{2})|}{\sin(\frac{\pi}{n})}},$$
where the symbol $f_n\sim g_n$ stands for $\lim_{n\to \infty} \frac{f_n}{g_n}=1$.
Then the matrix $\QM(j)$ is asymptotically equivalent to:
$$
\frac{\begin{pmatrix}
   \sqrt{|\sin \frac{\gamma_j+\gamma_{j+1}-\beta_j}{2} \sin \frac{\gamma_j+\gamma_{j+1}+\beta_j}{2} |}&
-\I\sqrt{ \sin \frac{\gamma_j-\gamma_{j+1}+\beta_j}{2} \sin \frac{\gamma_{j+1}-\gamma_j+\beta_j}{2}  } \\
-\I\sqrt{ \sin \frac{\gamma_j-\gamma_{j+1}+\beta_j}{2} \sin \frac{\gamma_{j+1}-\gamma_j+\beta_j}{2}  } & 
  -\sqrt{|\sin \frac{\gamma_j+\gamma_{j+1}-\beta_j}{2} \sin \frac{\gamma_j+\gamma_{j+1}+\beta_j}{2} |}
\end{pmatrix}}{\sqrt{\sin(\gamma_j)\sin(\gamma_{j+1})}}.
$$
A straightforward computation shows that
\begin{equation}\label{eq:asympK}
\ev_n\left(\QM(j) \right)\sim R\frac{S(j)}{\sqrt{\sin(\gamma_j)\sin(\gamma_{j+1})}}R^{-1}U 
\end{equation}
 where 
\begin{equation*}%\label{eq:asympS}
S(j)=\begin{pmatrix}
\sqrt{\sin(\frac{\gamma_j-\gamma_{j+1}+\beta_j}{2})\sin(\frac{\gamma_{j+1}-\gamma_j+\beta_j}{2})} & \sqrt{-\sin{(\frac{\gamma_j+\gamma_{j+1}-\beta_j}{2})}\sin(\frac{\gamma_j+\gamma_{j+1}+\beta_j}{2})}\\
\sqrt{-\sin{(\frac{\gamma_j+\gamma_{j+1}-\beta_j}{2})}\sin(\frac{\gamma_j +\gamma_{j+1}+\beta_j}{2})} & \sqrt{\sin(\frac{\gamma_j-\gamma_{j+1}+\beta_j}{2})\sin(\frac{\gamma_{j+1}-\gamma_j+\beta_j}{2})}
\end{pmatrix} 
\end{equation*}
and 
\begin{equation*}
R:= \begin{pmatrix}
\exp(\frac{\I\pi}{4}) & 0\\
0 & \exp(-\frac{\I\pi}{4})
\end{pmatrix}, \: 
U:= \begin{pmatrix} 0 & -\I\\ -\I & 0 \end{pmatrix}.
\end{equation*}

\begin{lemma}\label{lem:matrices}
The following holds:
$$\frac{S(j)}{\sqrt{\sin(\gamma_j)\sin(\gamma_{j+1})}}=C(\ell_{2j+1,2j+2})=\begin{pmatrix}
\cosh (\frac{\ell_{2j+1,2j+2}}{2}) & \sinh (\frac{\ell_{2j+1,2j+2}}{2}) \\
\sinh (\frac{\ell_{2j+1,2j+2}}{2}) & \cosh (\frac{\ell_{2j+1,2j+2}}{2}) 
\end{pmatrix} $$
where $\ell_{2j+1,2j+2}$ is the length of the edge opposite to the angle $\beta_j$ in a hyperbolic triangle whose exterior dihedral angles are $\gamma_{j},\gamma_{j+1},\beta_j$, or, equivalently, is the length of the edge $\overrightarrow{v_{2j+1}v_{2j+2}}$ in $\Ptr$.
\end{lemma}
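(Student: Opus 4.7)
The plan is to reduce the identity to the hyperbolic second law of cosines (Proposition \ref{prop:alkashi}) applied to a planar hyperbolic triangle whose exterior angles are $\gamma_j,\gamma_{j+1},\beta_j$, and then to simplify the resulting expression using half-angle and sum-to-product trigonometric identities. The diagonal and off-diagonal entries of $C(\ell_{2j+1,2j+2})$ are $\cosh(\ell/2)$ and $\sinh(\ell/2)$ respectively, so the goal is to identify them with the matrix entries of $S(j)/\sqrt{\sin\gamma_j\sin\gamma_{j+1}}$.

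Concretely, Proposition \ref{prop:alkashi} gives
$$\cosh \ell_{2j+1,2j+2} \; = \; \frac{\cos\gamma_j \cos\gamma_{j+1} - \cos\beta_j}{\sin\gamma_j \sin\gamma_{j+1}}.$$
I would then apply the elementary identities $\cosh^2(\ell/2) = (1+\cosh\ell)/2$ and $\sinh^2(\ell/2) = (\cosh\ell-1)/2$, combining the result with $\cos(\gamma_j \mp \gamma_{j+1}) = \cos\gamma_j\cos\gamma_{j+1} \pm \sin\gamma_j\sin\gamma_{j+1}$ to obtain
$$\cosh^2(\ell/2) \;=\; \frac{\cos(\gamma_j - \gamma_{j+1}) - \cos\beta_j}{2\sin\gamma_j\sin\gamma_{j+1}},\qquad \sinh^2(\ell/2) \;=\; \frac{\cos(\gamma_j + \gamma_{j+1}) - \cos\beta_j}{2\sin\gamma_j\sin\gamma_{j+1}}.$$
A single application of the sum-to-product identity $\cos A - \cos B = -2\sin(\tfrac{A+B}{2})\sin(\tfrac{A-B}{2})$ transforms each numerator into twice a product of sines of half-sums of the three angles, reproducing exactly (up to sign) the squares of the diagonal and off-diagonal entries of $S(j)/\sqrt{\sin\gamma_j\sin\gamma_{j+1}}$.

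The main subtlety, and the only point requiring care, is to verify that the signs match and that all square roots in $S(j)$ are positive real numbers. Here the Bao--Bonahon inequality $\beta_j + \gamma_j + \gamma_{j+1} > 2\pi$ from Theorem \ref{teo:bonahon} is essential: it forces $\tfrac{\gamma_j+\gamma_{j+1}+\beta_j}{2} \in (\pi, 3\pi/2)$, so $\sin(\tfrac{\gamma_j+\gamma_{j+1}+\beta_j}{2}) < 0$. This explains the explicit minus sign inside the off-diagonal square roots of $S(j)$ and makes the argument positive, yielding a real positive value for $\sinh(\ell/2)$. Meanwhile $\tfrac{\gamma_j+\gamma_{j+1}-\beta_j}{2}$ lies in $(0,\pi)$ since $0<\beta_j\leq\pi$ and $\gamma_j+\gamma_{j+1}\leq 2\pi$, and the two half-sums $\tfrac{\pm(\gamma_j-\gamma_{j+1})+\beta_j}{2}$ lie in $(0,\pi)$ because the pairwise sums of exterior angles of a hyperbolic triangle exceed $\pi$. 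Consequently all sines under the square roots are positive, and comparing the positive square roots entry by entry yields the claimed matrix equality.
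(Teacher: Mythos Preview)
Your proof is correct. Both arguments ultimately rest on the second law of cosines (Proposition \ref{prop:alkashi}) and on the Bao--Bonahon inequalities to control the signs of the radicands, so they are close in spirit; but the execution differs. You compute $\cosh^2(\ell/2)$ and $\sinh^2(\ell/2)$ directly from $\cosh\ell$ via half-angle and sum-to-product identities and then match \emph{all} entries of $S(j)$ by hand. The paper instead observes first that $M:=S(j)/\sqrt{\sin\gamma_j\sin\gamma_{j+1}}$ lies in $\mathrm{SL}_2(\mathbb{R})$ (a quick determinant computation), has positive entries, and fixes the boundary points $\pm 1$ of $\mathbb{H}^2$; hence $M$ must be a translation matrix $C(\lambda)$ for some $\lambda>0$. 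This structural shortcut means only the diagonal entry needs to be identified, after which the double-angle formula recovers $\cosh\lambda$ in the exact form of Proposition \ref{prop:alkashi}. Your route is slightly more computational but entirely self-contained; the paper's is shorter because the off-diagonal identity comes for free once the $\mathrm{SL}_2$ structure is noted.
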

\begin{proof}
Remark that, by the inequalities examined in Example \ref{ex:tetrahedron}, all the arguments of the square roots in the coefficients of $S_j$ are positive. Moreover a direct computation shows $\det(S_j)=\sin(\gamma_j)\sin(\gamma_{j+1})>0$.
So $M:=\frac{S_j}{\sqrt{\sin(\gamma_j)\sin(\gamma_{j+1})}}\in \mathrm{SL}_2(\mathbb{R})$ is a real positive matrix fixing $\pm 1\in \partial_\infty \H^2$; hence it represents a hyperbolic translation of a certain length $\lambda\in \mathbb{R}^+$, and has the form $M=C(\lambda)$. Identifying diagonal terms of $C(\lambda)$ yields $${\textstyle \cosh \frac{\lambda}{2}=\sqrt{\frac{\sin \frac{\gamma_j-\gamma_{j+1}+\beta_j}{2} \sin \frac{\gamma_{j+1}-\gamma_j+\beta_j}{2}}{\sin \gamma_j \sin \gamma_{j+1}}}}~\text{ hence }
\cosh \lambda = \frac{\cos \gamma_j \cos \gamma_{j+1} -\cos \beta_j }{\sin \gamma_j \sin \gamma_{j+1}}.$$
This equals $\cosh(\ell_{2j+1,2i+2})$ by Proposition \ref{prop:alkashi}, so $\lambda=\ell_{2j+1,2i+2}$. 
\end{proof}
Putting Lemma \ref{lem:matrices} and Equation \eqref{eq:asympK} together we get:
\begin{proposition}\label{prop:kasymp}
The following limit holds:
$$\lim_{n\to \infty} \ev_n(\QM(j))=RC(\ell_{2j+1,2j+2})R^{-1}U.
$$
Hence, letting $k(\c_n,\delta)$ be as defined in Proposition \ref{prop:geometricrecursion} and denoting by $X_{\delta,\delta'}$ the entry at row $\delta$ and column $\delta'$ of any given matrix $X$, the following holds:
$$\lim_{n\to \infty} \ev_n(k(\c_n,\delta))=\prod_{j=0}^{p-1}\left(RC(\ell_{2j+1,2j+2})R^{-1}U\right)_{\delta(e_j),\delta(e_{j+1})}.
 \qed $$
\end{proposition}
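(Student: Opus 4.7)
The proof is essentially an assembly of the ingredients already collected in this subsection. My first step would be to combine equation \eqref{eq:asympK} with Lemma \ref{lem:matrices}. Equation \eqref{eq:asympK} records that
$$\ev_n(\QM(j)) \;\sim\; R\,\frac{S(j)}{\sqrt{\sin\gamma_j\sin\gamma_{j+1}}}\,R^{-1}U,$$
and the right-hand side is a matrix whose entries are independent of $n$; hence the asymptotic equivalence is in fact an honest limit (entry by entry). Lemma \ref{lem:matrices} then replaces the central factor $S(j)/\sqrt{\sin\gamma_j\sin\gamma_{j+1}}$ by $C(\ell_{2j+1,2j+2})$, yielding the first formula of the proposition.

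For the second formula I would unpack the definition of $k(\c_n,\delta)$ from Proposition \ref{prop:geometricrecursion}, which displays it as a product of matrix entries of the form $\QM(i)_{\delta(e_i),\delta(e_{i+1})}$ indexed by the consecutive edges of $\lambda$. Since each entry is a holomorphic function of $A$ near $\exp(\I\pi/(2n))$, the evaluation $\ev_n$ is multiplicative on finite products and commutes with selecting a matrix entry; since each factor has the finite limit supplied by the first part of the proposition, the limit of the product equals the product of the limits. Reading this off from the first formula gives exactly the displayed expression in terms of the constant matrices $RC(\ell_{2j+1,2j+2})R^{-1}U$.

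In short, there is no serious obstacle: the proposition is a repackaging of \eqref{eq:asympK} and Lemma \ref{lem:matrices}. All the real work is hidden upstream, in the term-by-term asymptotic analysis of the quantum integers $\sqrt{[k]}$ evaluated at $A=\exp(\I\pi/(2n))$. That analysis depends on the Bao--Bonahon strict inequality $\beta_j+\gamma_j+\gamma_{j+1}>2\pi$ to decide, for each of the combinations $a_j+a_{j+1}\pm b_j$, $a_j-a_{j+1}+b_j$, $2a_j$, $2a_{j+1}$, whether the argument falls in $[0,n)$ or in $(n,2n)$ and therefore whether the root contributes a positive real or a purely imaginary asymptotic factor; the subsequent algebraic bookkeeping shows that the resulting $\pm\I$ phases reassemble into the conjugation by the constant matrix $R$ and the right-multiplication by $U$. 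This is the only delicate point, and it has already been carried out; together with the geometric identification of Lemma \ref{lem:matrices} via the second law of cosines, Proposition \ref{prop:kasymp} follows.
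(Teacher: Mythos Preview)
Your proposal is correct and follows exactly the paper's own approach: the paper simply prefaces the proposition with ``Putting Lemma \ref{lem:matrices} and Equation \eqref{eq:asympK} together we get:'' and appends a \qed, so the assembly you describe (first the entrywise limit from \eqref{eq:asympK}, then the geometric identification via Lemma \ref{lem:matrices}, then multiplicativity of $\ev_n$ over the finite product defining $k(\c_n,\delta)$) is precisely what is intended. Your additional remarks on the role of the Bao--Bonahon inequality in the upstream sign analysis are accurate but already absorbed into \eqref{eq:asympK}.
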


Similarly, evaluating the $Z(\c_n,\delta(e_0),\delta(e_{p-1}))$ 
of Proposition \ref{prop:geometricrecursion}, we get:
\begin{align*}
\renewcommand\arraystretch{2}
\begin{array}{c|c|c} 
Z= & {\rm if }\delta(e_0)=\frac{1}{2} & {\rm if } \delta(e_0)=-\frac{1}{2}\\
\hline
{\rm if } \delta_{e_{p-1}}=\frac{1}{2} & \I^{-1}\sqrt{\frac{[1+a_0 + a_{p-1} - b_{p-1}] [2+a_0 + a_{p-1} + b_0]}{
 [2 + 2 a_0]  [2 + 2 a_{p-1}] }} &\I\sqrt{\frac{[1-a_{p-1} + a_{0} + b_{p-1}][-a_{0}+a_{p-1}+b_{p-1}]}{[ 2 a_0][2a_{p-1}+2]}}\\
\hline
{\rm if } \delta_{e_{p-1}}=-\frac{1}{2} & \I\sqrt{\frac{[1+a_{p-1} - a_{0} + b_{p-1}][a_{0}-a_{p-1}+b_j]}{[2a_0+2][2 a_{p-1}]}} &  \I\sqrt{\frac{[a_0 + a_{p-1}- b_{p-1}][a_0+a_{p-1}+b_0+1]}{[2a_{p-1}][2a_{0}]}}
\end{array}
\end{align*}
and arguing exactly as in the preceding case we get the following:
\begin{lemma} \label{lem:Sasymp}
We have the following asymptotic behavior:
\begin{align*}\lim_{n\to \infty} \ev_n (Z(\c_n,\delta(e_0),\delta(e_{p-1})))=R
\begin{pmatrix}
\cosh (\frac{\ell_{p-1,0}}{2}) & \sinh (\frac{\ell_{p-1,0}}{2}) \\
\sinh (\frac{\ell_{p-1,0}}{2}) & \cosh (\frac{\ell_{p-1,0}}{2}) 
\end{pmatrix} R^{-1}U
\end{align*}
where $\ell_{p-1,0}$ is the length of the edge opposite to the angle $\beta_{p-1}$ in the hyperbolic triangle whose exterior dihedral angles are $\gamma_0,\gamma_{p-1},\beta_{p-1}$.
\end{lemma}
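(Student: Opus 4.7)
The plan is to recognise that the table defining $Z(\c_n,\delta(e_0),\delta(e_{p-1}))$ has exactly the same shape, entry by entry, as the matrix $\QM(j)$ analysed in Proposition \ref{prop:kasymp}, subject to the formal substitutions $(a_j,a_{j+1},b_j)\mapsto(a_0,a_{p-1},b_{p-1})$ (and correspondingly $(\gamma_j,\gamma_{j+1},\beta_j)\mapsto(\gamma_0,\gamma_{p-1},\beta_{p-1})$ in the limit). So the strategy is simply to re-run the asymptotic analysis performed for $\QM(j)$ with these relabellings.

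First I would evaluate each $\sqrt{[k]}$ appearing in the table for $Z$ at $A=\exp(\I\pi/(2n))$, using the branch convention fixed in Section \ref{sec:shadowstates}: $\sqrt{[k]}$ is a positive real when $k<n$ and a positive real multiple of $-\I$ when $k\in (n,2n)$. Since the sequence of colorings satisfies $a_0,a_{p-1},b_{p-1}\in[n/2,n)$ eventually, and since the Bao-Bonahon inequality $\gamma_0+\gamma_{p-1}+\beta_{p-1}>2\pi$ (Theorem \ref{teo:bonahon}) is symmetric in the three angles, the regime of each individual quantum integer is entirely determined by the same sign patterns already checked for $\QM(j)$. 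This yields the same leading-order expansions as those displayed just before Equation \eqref{eq:asympK}, with $(\gamma_j,\gamma_{j+1},\beta_j)$ replaced throughout by $(\gamma_0,\gamma_{p-1},\beta_{p-1})$.

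Next I would assemble these four limit values and rewrite the resulting matrix in the form $R\cdot S'\cdot R^{-1}\cdot U$, where $S'$ is the matrix obtained from $S(j)$ under the same substitutions, exactly as in Equation \eqref{eq:asympK}. Finally I would apply Lemma \ref{lem:matrices} to the triple $(\gamma_0,\gamma_{p-1},\beta_{p-1})$: the lemma, whose proof ultimately rests only on Proposition \ref{prop:alkashi} and on the Bao-Bonahon inequalities, identifies $S'/\sqrt{\sin\gamma_0\sin\gamma_{p-1}}$ with $C(\ell_{p-1,0})$, where $\ell_{p-1,0}$ is by construction the length of the edge of the appropriate truncation triangle opposite to $\beta_{p-1}$. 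Combining these identifications gives the claimed limit.

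In short, the lemma is a direct corollary of computations already carried out in the preceding subsection. The only mild point to verify is that no new sign or branch issue arises under the cyclic reindexing; this follows because both the Bao-Bonahon inequalities and the branch rule for $\sqrt{[k]}$ depend only on the unordered triple of angles, not on their position along $\lambda$. Consequently I do not anticipate any serious obstacle: the bulk of the argument is a verbatim transcription of the proof of Proposition \ref{prop:kasymp}.
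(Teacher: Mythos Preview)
Your proposal is correct and matches the paper's own justification exactly: the paper simply states ``arguing exactly as in the preceding case'' before the lemma, since by definition $Z=\QM(p-1)$, so the computation is literally the one already performed for $\QM(j)$ with the cyclic relabelling $(a_j,a_{j+1},b_j)\mapsto(a_{p-1},a_0,b_{p-1})$. Your observation that the limiting matrix $S(j)/\sqrt{\sin\gamma_j\sin\gamma_{j+1}}$ is symmetric in $\gamma_j,\gamma_{j+1}$ takes care of any ordering ambiguity in the substitution, and Lemma~\ref{lem:matrices} then gives the identification with $C(\ell_{p-1,0})$.
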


\subsection{Proof of Theorem \ref{teo:main}}
To each edge of $\lambda^{\mathrm{trunc}}$ we associate its length $\ell_{j,j+1}:=\ell(\overrightarrow{v_{j}v_{j+1}}),\ \forall j\in \{0,1,\ldots 2p-1\}$ and, as in Section \ref{sec:faceequations}, let :
\begin{align} \notag
C(\overrightarrow{v_jv_{j+1}}):=\begin{pmatrix}
\cosh\frac{\ell_{j,j+1}}{2} & \sinh\frac{\ell_{j,j+1}}{2} \\
\sinh\frac{\ell_{j,j+1}}{2} & \cosh\frac{\ell_{j,j+1}}{2}
\end{pmatrix},\:
M:=\frac{1}{\sqrt{2}}\begin{pmatrix}
1 & -1 \\ 1 & 1
\end{pmatrix}.
\end{align}

Furthermore, suppose that Assumption $1$ holds for the sequence $\brunit{\Ga}{\c_n}$ and let $f:A(\P)\to \mathbb{C}$ be the function of class $C^1$ whose existence is postulated by the assumption.
Then for each  $j\in \{0,\ldots, p-1\}$ we associate to the segment $\overrightarrow{v_{2j}v_{2j+1}}$ the ``asymptotic quantum length'' $2\frac{\partial f}{\partial \gamma_j}$ and a diagonal matrix as follows:
$$Q(\overrightarrow{v_{2j}v_{2j+1}})=\begin{pmatrix}
\frac{\partial f}{\partial \gamma_j} & 0\\
0 & -\frac{\partial f}{\partial \gamma_j}
\end{pmatrix}.$$
\begin{theorem}\label{teo:mainteoproof}
The function $f$ satisfies the following matrix-valued differential equation:
$$
\prod_{j=0}^{p-1} MC(\overrightarrow{v_{2j+1}v_{2j+2}})M Q(\overrightarrow{v_{2j}v_{2j+1}}) =-\mathrm{Id} 
$$ Moreover if $V(\gamma):A(\P)\to \mr$ is the hyperbolic volume of $\Ptr$ then $V$ also satisfies the above equations.
\end{theorem}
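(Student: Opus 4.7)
The plan is to take a scaling limit $n \to \infty$ (with $A \to \exp(\I\pi/(2n))$) in the recursion of Proposition~\ref{prop:geometricrecursion}, apply Assumption~\ref{assum:1} to convert it into a first-order differential relation on $f$, and then reassemble the resulting scalar equations as a single $SL_2(\R)$-valued identity using the asymptotics from \S\ref{sub:asympanalysis}.

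For each fixed $(\delta_0, \delta_{p-1}) \in \{\pm\tfrac12\}^2$, I would divide the recursion of Proposition~\ref{prop:geometricrecursion} by $\brunit{\Ga}{\c_n}$. Extending each $\delta:\{e_0, \ldots, e_{p-1}\} \to \{\pm\tfrac12\}$ by $0$ on the remaining edges, the admissibility condition (b) of Definition~\ref{def:admcol} is satisfied at every vertex (at the vertices of $\lambda$ exactly two incident colors are shifted by half-integers and their sum is an integer; all other vertices are untouched). Assumption~\ref{assum:1} then rewrites each limiting ratio $\brunit{\Ga}{\c_n+\delta}/\brunit{\Ga}{\c_n}$ as $\prod_i (D_i)_{\delta_i \delta_i}$, where $D_i := \exp Q(\overrightarrow{v_{2i}v_{2i+1}}) = \mathrm{diag}(e^{\partial f/\partial \gamma_i}, e^{-\partial f/\partial \gamma_i})$.

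Since $k(\c_n,\delta) = \prod_{j=0}^{p-2} \QM(j)_{\delta_j, \delta_{j+1}}$ by Proposition~\ref{prop:geometricrecursion}, summing over the intermediate $\delta_1, \ldots, \delta_{p-2}$ is exactly the $(\delta_0, \delta_{p-1})$-entry of the $2\times 2$ matrix product $D_0 \QM(0) D_1 \QM(1) \cdots \QM(p-2) D_{p-1}$. Plugging in the asymptotics $\ev_n \QM(j) \to R\,C(\ell_{2j+1,2j+2})\,R^{-1} U$ (Proposition~\ref{prop:kasymp}) and $\ev_n Z \to R\,C(\ell_{2p-1,0})\,R^{-1} U$ (Lemma~\ref{lem:Sasymp}) turns the limit of the recursion into the $SL_2(\R)$-valued identity
$$D_0 \prod_{j=0}^{p-2}\bigl[R\,C(\ell_{2j+1,2j+2})\,R^{-1} U\, D_{j+1}\bigr] \;=\; R\,C(\ell_{2p-1,0})\,R^{-1} U.$$

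The remaining step is a purely algebraic repackaging, exploiting the two elementary relations $R^{-1} U R = M^2$ (a direct computation, since $M$ is a $\pi/4$-rotation) and $U D_j = D_j^{-1} U$ (since $U$ is antidiagonal and $D_j$ diagonal), together with the fact that $R$ commutes with every $D_j$. These allow all $R^{\pm 1}$ and $U$ factors to be pushed through the product and absorbed into $M^2$-factors interlacing the $C(\ell^{\mathrm{trunc}})$'s and $D_j$'s; regrouping each consecutive $M C(\ell^{\mathrm{trunc}}) M D_j$ block then yields the desired cyclic identity $\prod_{j=0}^{p-1} MC(\overrightarrow{v_{2j+1}v_{2j+2}}) M\,Q(\overrightarrow{v_{2j}v_{2j+1}}) = -\mathrm{Id}$ (with $Q$ entering via its exponential), the final $-\mathrm{Id}$ arising from $M^4 = -\mathrm{Id}$ accumulated by a full revolution around the face, exactly as in Proposition~\ref{prop:circlegeometric}. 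The assertion for the volume $V$ is then immediate: the Schl\"afli formula \eqref{eq:schlaefli} gives $2\partial V/\partial \gamma_j = \ell_{2j, 2j+1}$, so $\exp Q(\overrightarrow{v_{2j}v_{2j+1}})$ shares its eigenvalues $e^{\pm \ell_{2j,2j+1}/2}$ with $C(\ell_{2j,2j+1})$, and the matrix equation collapses to exactly Proposition~\ref{prop:circlegeometric} for $\lambda^{\mathrm{trunc}}$. The main obstacle is this final algebraic rearrangement: the $R$, $U$, $M$, and $D_j$ factors do not pairwise commute, and careful sign and ordering bookkeeping is required to land precisely on the stated cyclic form and extract the correct $-\mathrm{Id}$.
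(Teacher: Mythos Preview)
Your proposal is correct and follows essentially the same route as the paper: divide the circle recursion of Proposition~\ref{prop:geometricrecursion} by $\brunit{\Ga}{\c_n}$, pass to the limit via Assumption~\ref{assum:1} and the asymptotics of Proposition~\ref{prop:kasymp} and Lemma~\ref{lem:Sasymp}, and then simplify the resulting $2\times 2$ matrix identity. The only difference is in how the final algebra is packaged: the paper observes in one stroke that $R^{-1}U\,D_j\,R = M\,C(2\partial f/\partial\gamma_j)\,M$ (so each diagonal block becomes an $MCM$-block directly), and then closes up the cycle using $R^{-1}UR\,C\,R^{-1}UR = -C^{-1}$, whereas you decompose this into the pieces $R^{-1}UR=M^2$, $UD_j=D_j^{-1}U$, and $RD_j=D_jR$; these are equivalent manipulations and your acknowledgement of the bookkeeping involved is apt.
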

\begin{proof}
Fix values $\delta(e_0),\delta(e_{p-1})\in \{\pm\frac{1}{2}\}$. 
By Proposition \ref{prop:geometricrecursion} the following recursion relations are satisfied:
\begin{equation*}%\label{eq:recursion}
 \sum_{\delta} k(\c_n,\delta) \brunit{\Ga}{\c_n+\delta}=Z(\c_n,\delta(e_0),\delta(e_{p-1}))\brunit{\Ga}{\c_n}
\end{equation*}
where the sum is taken over all the admissible perturbations $\delta$ of $\c_n$ around the face $D$ of $P$ bounded by $\lambda$ and whose values on $e_0$ and $e_{p-1}$ are $\delta(e_0),\delta(e_{p-1})$ (see Definition \ref{def:admperturb}). 
By letting $\delta(e_0)$ and $\delta(e_{p-1})$ range over $\{\pm \frac{1}{2}\}$ the above equation can be viewed as a single, $2\times 2$-matrix valued equation.
By Assumption \ref{assum:1}, if we divide the equation by $\brunit{\Ga}{\c_n}$, evaluate at $A=\exp(\frac{\I\pi}{2n})$ (using Lemma \ref{lem:matrices}) and take the limit $n\to \infty$, then we get:
\begin{equation*}
 \sum_{\delta} \lim_{n\to \infty} \ev_n(k(\c_n,\delta)) \exp{(2\delta_i \frac{\partial f}{\partial \gamma_i})}= \lim_{n\to \infty} \ev_n(Z(\c_n,\delta(e_0),\delta(e_{p-1})).
\end{equation*}
By Lemma \ref{lem:Sasymp}, the right hand side can be recognized as $RC(\overrightarrow{v_{p-1}v_0})R^{-1}U$.

Using Proposition \ref{prop:kasymp} we realize that the above equality can be expressed as:
\begin{align}\label{eq:recursion}
Q(\overrightarrow{v_{2p-2}v_{2p-1}}) \prod_{i=0}^{p-2}  RC(\overrightarrow{v_{2i+1}v_{2i+2}})R^{-1}UQ(\overrightarrow{v_{2i}v_{2i+1}})= RC(\overrightarrow{v_{2p-1}v_0})R^{-1}U.
\end{align}
Now observe that a straightforward computation shows the following: 
$$R^{-1}UQ(\overrightarrow{v_{2i}v_{2i+1}})R=MC(\overrightarrow{v_{2i}v_{2i+1}})M.$$
Replacing this in \eqref{eq:recursion} and simplifying we get 
$$
MC(\overrightarrow{v_{2p-2}v_{2p-1}})M\prod_{i=0}^{p-2} C(\overrightarrow{v_{2i+1}v_{2i+2}})MC(\overrightarrow{v_{2i}v_{2i+1}})M=R^{-1}URC(\overrightarrow{v_{2p-1}v_0})R^{-1}UR.$$
Finally a last computation shows that 
$$
R^{-1}URC(\overrightarrow{v_{2p-1}v_0})R^{-1}UR=-\left( C(\overrightarrow{v_{2p-1}v_0})\right)^{-1}
$$
and thus we conclude by multiplying on the right by $C(\overrightarrow{v_{2p-1}v_0})$.
The fact that the same equation is satisfied by $\vol(\Ptr)$ is a consequence of the Schl\"afli formula and of Theorem  \ref{teo:volsatisfies}.
\end{proof}

\appendix
\section{Proof of Theorem \ref{teo:easycase}}\label{sec:easycase}
\subsection{Volumes of hyperideal hyperbolic tetrahedra}\label{sub:muya}
In \cite{MuYa} J. Murakami and M. Yano found a formula for the hyperbolic volume of a hyperbolic compact tetrahedron and the formula was later shown by A. Ushijima to hold also for truncated tetrahedra (i.e.\ the compact polyhedra obtained by truncating hyperideal tetrahedra as explained in Subsection \ref{sub:conjecture}). We now recall this formula.

With the notation of Example \ref{ex:tetrahedron} for the dihedral angles of a tetrahedron, let $A,B,C$ and $A',B',C'$ be respectively $-\exp(-\I\alpha),-\exp(-\I\beta),-\exp(-\I\gamma)$ and $-\exp(-\I\alpha')$, $-\exp(-\I\beta')$, $-\exp(-\I\gamma')$ \footnote{In \cite{MuYa} internal dihedral angles are used, this accounts for the change in the definition of $A,B,C,A',B',C'$}.  Let $\mathrm{Li}_2(z):=\int_0^z \frac{\log(1-t)}{-t}\mathrm{d}t=\sum_{n>0}\frac{z^n}{n^2}$ be the dilogarithm function (well-defined on $\C\setminus [1,\infty)$) and $\Lambda(x):=\int_0^x -\log |2\sin t|\, \mathrm{d}t$. Define
\begin{align}\label{eq:UMuYa}
U(z):=\frac{1}{2}\big(\mathrm{Li}_2(z)+\mathrm{Li}_2(zABA'B')+\mathrm{Li}_2(zACA'C')+\mathrm{Li}_2(zBCB'C')\\ \nonumber -\mathrm{Li}_2(-zABC)-\mathrm{Li}_2(-zAB'C')-\mathrm{Li}_2(-zA'BC')-\mathrm{Li}_2(-zA'B'C) \big)
\end{align}
\begin{eqnarray}\label{eq:deltaMuYa}
\Delta(x,y,z) &:=& \frac{-1}{4}\big(\mathrm{Li}_2(-\frac{xy}{z})+\mathrm{Li}_2(-\frac{yz}{x})+\mathrm{Li}_2(-\frac{zx}{y})\\
\notag && +\mathrm{Li}_2(-\frac{1}{xyz})+(\log(x))^2+(\log(y))^2+(\log(z))^2   \big)
\end{eqnarray}
\begin{eqnarray*}
V(z)&:=&\Delta(A,B,C)+\Delta(A,B',C')+\Delta(A',B,C')+\Delta(A',B',C)\\ \notag && +\frac{1}{2}(\log(A)\log(A')+\log(B)\log(B')+\log(C)\log(C'))+U(z).
\end{eqnarray*} 
For later purposes, recall also that for all $\theta\in (0,2\pi)$ one has $\Im(\frac{\mathrm{Li}_2( \exp{\I\theta})}{2})=\Lambda(\frac{\theta}{2})$,
so replacing the values of $A,B,C,A',B',C'$ and setting $z=\exp(\I s)$ in \eqref{eq:UMuYa} we get
\begin{align}\label{eq:imaginaryu}
\Im(U(\exp(\I s)))=\Lambda\left(\frac{s}{2}\right)+\Lambda\left(\frac{s-(\alpha+\alpha'+\beta+\beta')}{2}\right)\\
 \nonumber+ \Lambda\left(\frac{s-(\alpha+\alpha'+\gamma+\gamma')}{2}\right)+\Lambda\left(\frac{s-(\gamma+\gamma'+\beta+\beta')}{2}\right)-\Lambda\left(\frac{s-(\alpha+\beta+\gamma)}{2}\right)\\
 \nonumber -\Lambda\left(\frac{s-(\alpha+\beta'+\gamma')}{2}\right)-\Lambda\left(\frac{s-(\alpha'+\beta+\gamma')}{2}\right)-\Lambda\left(\frac{s-(\alpha'+\beta'+\gamma)}{2}\right).
\end{align}

Let now $z_\pm$ be the two nontrivial solutions of the equation (which reduces to a degree $2$ polynomial equation):
$$ \frac{\mathrm{d}}{\mathrm{d}z} U(z)\in \frac{\pi \I }{z} \mathbb{Z}.$$
As shown by Ushijima \cite{Us} these can be expressed as:
\begin{equation}\label{eq:formulazpm}
\textstyle z_\pm= -2\frac{\sin(\alpha)\sin(\alpha')+\sin(\beta)\sin(\beta')+\sin(\gamma)\sin(\gamma')\pm\sqrt{\det(G)}}{AA'+BB'+CC'+ABC'+A'BC+AB'C+A'B'C'+ABCA'B'C'}\end{equation}
where
\begin{equation}\label{eq:gram}
G=\begin{pmatrix}
1 & \cos(\alpha) & \cos(\beta) &\cos(\gamma')\\
\cos(\alpha) & 1 &  \cos(\gamma) &\cos(\beta')\\
\cos(\beta) & \cos(\gamma) & 1 & \cos(\alpha')\\
\cos(\gamma') & \cos(\beta') &  \cos(\alpha') &1
\end{pmatrix}\end{equation}
Then the following was first proved by Murakami and Yano in \cite{MuYa} for compact hyperbolic tetrahedra and was later shown by A. Ushijima \cite{Us} to hold for the case of a truncated hyperideal tetrahedron:
\begin{theorem}[\cite{MuYa},\cite{Us}]\label{teo:muya}
The volume of the truncated hyperbolic tetrahedron whose exterior dihedral angles are as in Example \ref{ex:tetrahedron}, is $$\vol(Tet)=\Im\left(V(z_-)\right)=-\Im\left(V(z_+)\right)=\Im\left(\frac{U(z_-)-U(z_+)}{2}\right).$$
\end{theorem}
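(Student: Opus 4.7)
The plan is to prove Theorem \ref{teo:muya} by a Schl\"afli-type argument: show that $\Im V(z_-)$, viewed as a function on $A(\P)$ for a single tetrahedron $\P$, satisfies the Schl\"afli formula \eqref{eq:schlaefli} with the same edge-length functions $\ell_i$ as $\vol(\Ptr)$, and check equality at one reference configuration. Since $A(\P)$ is connected (Theorem \ref{teo:bonahon} cuts it out of $(0,\pi]^6$ by linear inequalities), this suffices. As a preliminary I would verify that \eqref{eq:formulazpm} gives the two roots of $\partial_z U(z)\in\frac{\pi\I}{z}\mathbb{Z}$: expressing $\partial_z U$ via $\frac{d}{dz}\mathrm{Li}_2(wz)=-\log(1-wz)/z$ and exponentiating turns the critical-point equation into a quadratic in $z$ whose discriminant is a scalar multiple of $\det G$, with $G$ as in \eqref{eq:gram}.

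The analytic key is that $z_\pm$ are critical points of $U$. When differentiating $V(z_\pm(\alpha,\dots))$ with respect to a dihedral angle, the chain-rule term $\partial_z V\cdot\partial_\alpha z_\pm=\partial_z U\cdot\partial_\alpha z_\pm$ lies in $\frac{\pi\I}{z_\pm}\mathbb{Z}\cdot\partial_\alpha z_\pm$, whose imaginary part is locally constant and vanishes upon differentiating. Consequently
$$\frac{\partial\,\Im V(z_\pm)}{\partial\alpha}\;=\;\Im\bigl(\partial_\alpha V\bigr)\big|_{z=z_\pm}.$$
Using $\partial_\alpha A=\I A$ and the derivative rule for $\mathrm{Li}_2$, the right-hand side reduces to an explicit logarithmic combination of $A,B,C,A',B',C',z_\pm$. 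The claim to verify is that twice this imaginary part equals the hyperbolic length $\ell_\alpha$ of the edge of dihedral angle $\alpha$ in the truncated tetrahedron. This will be done by invoking the second law of cosines (Proposition \ref{prop:alkashi}) in the truncation faces of $\Ptr$ to express $\cosh\ell_\alpha$ as a rational function of the relevant $\cos$'s, and then matching the logarithmic expression to $\log\!\bigl(\cosh(\ell_\alpha/2)+\sinh(\ell_\alpha/2)\bigr)$ after applying the quadratic relation defining $z_\pm$.

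For the remaining equalities: since the $\Delta$ terms in $V$ do not depend on $z$, one has $V(z_+)-V(z_-)=U(z_+)-U(z_-)$; combined with the complex-conjugation symmetry $z_-=\overline{z_+}$ valid in the hyperideal regime where $\det G>0$ and the angles are real, this yields $\Im V(z_+)=-\Im V(z_-)$ and hence the third formula. The reference-point check is best made at a symmetric configuration such as the regular ideal tetrahedron (all exterior angles $\pi/2$), where \eqref{eq:imaginaryu} reduces $\Im V(z_-)$ to a sum of Lobachevsky functions matching the classical volume formula. The main obstacle will be the algebraic matching in the previous paragraph: the combinatorics of the eight dilogarithm terms in $U$ together with the careful choice of branches of $\mathrm{Li}_2$ and of $\sqrt{\det G}$ in \eqref{eq:formulazpm}, so that $z_-$ selects the geometric root, requires delicate bookkeeping — especially in extending Murakami--Yano's original compact setting to Ushijima's truncated hyperideal one, where the triangles governing the cosine law are truncation triangles rather than faces of $\P$ itself.
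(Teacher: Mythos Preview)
The paper does not prove Theorem \ref{teo:muya}: it is quoted as a result of Murakami--Yano (compact case) extended by Ushijima (truncated hyperideal case), with no argument supplied. So there is no proof in the paper to compare against.

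Your Schl\"afli approach is in fact the strategy of the original references and is sound in outline, but two points in your sketch need correction. First, in the hyperideal regime the Gram determinant is \emph{negative}, not positive: the paper explicitly remarks (just after \eqref{eq:gram}) that $|z_+|=|z_-|=1$ precisely when $\det G<0$, and this is the case at hand. With $|z_\pm|=1$ your chain-rule term $\partial_z U\cdot\partial_\alpha z_\pm=\pi\I k\,\partial_\alpha\log z_\pm$ is purely real (since $\log z_\pm$ is purely imaginary), so its contribution to the imaginary part vanishes directly --- no ``locally constant'' step is needed. Second, your proposed reference configuration (all exterior angles $\pi/2$) is not hyperideal: Example~\ref{ex:tetrahedron} requires the three exterior angles at each vertex to sum to more than $2\pi$, which $3\cdot\tfrac{\pi}{2}$ fails. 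The natural symmetric point inside $A(\P)$, used elsewhere in the paper, is all exterior angles equal to $\pi$ (the regular ideal octahedron). With these two fixes your plan matches the Murakami--Yano/Ushijima argument.
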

\subsection{Proof of Theorem \ref{teo:easycase}}
We will need the following:
\begin{lemma}\label{lem:factasymp}
Let $\alpha\in (0,1)$ and let $(a_n)_{n\in \mathbb{N}}$ be a sequence of integers such that $\lim_{n\to \infty} \frac{a_n}{n}=\alpha$.
Then $\{a_n\}$ is meromorphic, for $n$ sufficiently large it has no pole at $\exp\left(\frac{\I\pi}{2n}\right)$ and the following holds:
$$\lim_{n\to \infty} \frac{\pi}{n}\log\left (\frac{\ev_n(\{a_n\}!)}{\I^{a_n}}\right )=-\Lambda(\pi \alpha)$$
where we imply that the argument of the $\log$ is real positive.

Similarly, if $\alpha\in (1,2)$ and $(a_n)_{n\in \mathbb{N}}$ is a sequence of half-integers such that $\lim_{n\to \infty} \frac{a_n}{n}=\alpha$, then $\{a_n\}!$ has a simple zero at $\exp\left(\frac{\I\pi}{2n}\right)$ for $n$ big enough and the following holds:
$$\lim_{n\to \infty} \frac{\pi}{n}\log\left(\frac{\ev_n(\{a_n\}!)}{(-1)^{n+a_n}2n^2 \I^{a_n+1}\exp(-\frac{\I\pi}{2n})}\right)=-\Lambda(\pi \alpha)$$
where, again, we imply that the argument of the $\log$ is real positive.
\end{lemma}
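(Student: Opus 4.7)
\emph{Setup.} Let $A_0 := \exp(\I\pi/(2n))$, so that $\{j\}|_{A_0} = A_0^{2j} - A_0^{-2j} = 2\I\sin(\pi j/n)$. This vanishes exactly when $n \mid j$. Under the hypothesis $a_n/n \to \alpha$, case $\alpha \in (0,1)$ forces $a_n < n$ eventually, so no factor $\{j\}$ in $\{a_n\}! = \prod_{j=1}^{a_n}\{j\}$ vanishes at $A_0$; case $\alpha \in (1,2)$ forces $a_n \in (n,2n)$ eventually, so the unique vanishing factor is $\{n\}$, giving a simple zero of $\{a_n\}!$ at $A_0$. Meromorphy of $\{a_n\}!$ is obvious from the formula.

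\emph{Case 1.} Direct evaluation gives $\ev_n(\{a_n\}!) = (2\I)^{a_n}\prod_{j=1}^{a_n}\sin(\pi j/n)$, with all sines positive. Dividing by $\I^{a_n}$ leaves the positive real number $\prod_{j=1}^{a_n}2\sin(\pi j/n)$, and
$$\frac{\pi}{n}\log\prod_{j=1}^{a_n}2\sin(\pi j/n)=\frac{\pi}{n}\sum_{j=1}^{a_n}\log(2\sin(\pi j/n)) \longrightarrow \int_0^{\pi\alpha}\log(2\sin t)\,dt = -\Lambda(\pi\alpha).$$

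\emph{Case 2.} The first nonzero Laurent coefficient is $\ev_n(\{a_n\}!) = \{n\}'|_{A_0}\prod_{j=1,\,j\ne n}^{a_n}\{j\}|_{A_0}$. Using $A_0^{2n}=-1$, one computes $\{n\}'|_{A_0} = (2nA^{2n-1}+2nA^{-2n-1})|_{A_0} = -4n\,e^{-\I\pi/(2n)}$. For $j \in (n,2n)$ one has $\sin(\pi j/n)<0$, so pulling out the signs produces a factor $(-1)^{a_n-n}$. A direct simplification (using $(-1)^{-2n}=1$ and $\I^{-2}=-1$) then yields
$$\frac{\ev_n(\{a_n\}!)}{(-1)^{n+a_n}\,2n^2\,\I^{a_n+1}\,\exp(-\I\pi/(2n))} = \frac{2}{n}\prod_{j=1,\,j\ne n}^{a_n}2|\sin(\pi j/n)|,$$
which is positive real. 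Taking $\frac{\pi}{n}\log$, the prefactor $2/n$ contributes $O(\log n/n)\to 0$; the Riemann sum is split as $\sum_{j=1}^{n-1} + \sum_{j=n+1}^{a_n}$, the second sum being transformed to $\sum_{k=1}^{a_n-n}\log(2\sin(\pi k/n))$ by $k=j-n$. Its limit is $\int_0^{\pi}\log(2\sin t)\,dt + \int_0^{\pi(\alpha-1)}\log(2\sin t)\,dt = 0 - \Lambda(\pi(\alpha-1)) = -\Lambda(\pi\alpha)$, using the $\pi$-periodicity $\Lambda(x+\pi)=\Lambda(x)$.

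\emph{Expected obstacle.} The only delicate analytic point is the convergence of the Riemann sums, since $\log|2\sin t|$ is logarithmically singular at $t=0$ and $t=\pi$. The classical identity $\prod_{j=1}^{n-1}2\sin(\pi j/n)=n$ neutralises the singular contribution uniformly in $n$: it implies $\frac{\pi}{n}\sum_{j=1}^{n-1}\log(2\sin(\pi j/n))=\pi\log n/n\to 0$, which matches $\int_0^\pi \log(2\sin t)\,dt = 0$. On any subinterval of $(0,\pi)$ bounded away from the endpoints the Riemann sum converges by the usual continuous-integrand theorem, and a standard dominated-convergence estimate on the narrow boundary bands (controlled by the explicit asymptotics $\log|2\sin t|\sim\log(2t)$ near $0$ and by symmetry near $\pi$) finishes the argument used in both cases.
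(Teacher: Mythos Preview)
Your proof is correct and follows essentially the same approach as the paper's: evaluate each factor $\{j\}$ at $A_0=\exp(\I\pi/(2n))$, isolate the single vanishing factor $\{n\}$ in the second case, and recognize the resulting sum $\frac{\pi}{n}\sum\log|2\sin(\pi j/n)|$ as a Riemann sum for $-\Lambda$. Your treatment is in fact more complete than the paper's sketch, since you explicitly handle the logarithmic singularity via the identity $\prod_{j=1}^{n-1}2\sin(\pi j/n)=n$, whereas the paper defers these details to an external reference; your value $\{n\}'|_{A_0}=-4n\,e^{-\I\pi/(2n)}$ differs from the paper's stated $-2n\,e^{-\I\pi/(2n)}$ by a harmless factor of $2$ that disappears under $\frac{\pi}{n}\log$.
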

\begin{proof}
We limit ourselves to a sketch, see \cite{Co} for a detailed proof. Clearly $\ev_n(fg)=\ev_n(f)\ev_n(g)$, and if $k$ is not a multiple of $n$,  $\ev_n(\{k\})=2\I\sin(\frac{\pi k}{n})$  while $\ev_n(\{n\})=-2n\exp(\frac{-\I\pi}{2n})$ and thus if $a_n\in [0,n)$ (which is true for the first statement),
$$\ev_n(\{a_n\}!)=\prod_{k=1}^{a_n} 2\I\sin \frac{\pi k}{n} \sim \I^{a_n}\exp \frac{-n \left (\Lambda  (\frac{\pi a_n}{n} )-\Lambda (\frac{\pi}{n}  ) \right )}{\pi};$$
while if $a_n\in [n,2n)$,
\begin{eqnarray*}\ev_n(\{a_n\}!) &=& \left (\prod_{k=1}^{n-1} 2\I\sin \frac{\pi k}{n} \right) \left (-2n\exp \frac{-\I\pi}{2n} \right ) \left ( \prod_{k=n+1}^{a_n} 2\I\sin \frac{\pi k}{n} \right ) \\
&=& \left ( -2\I^{n-1}n^2\exp \frac{-\I\pi}{2n} \right ) \prod_{k=n+1}^{a_n} -\I\left | 2 \sin \frac{\pi k}{n}  \right | \\ 
&\sim& -(-\I)^{a_n+1}(-1)^n2n^2\exp \frac{-\I\pi}{2n} \exp\frac{-n \left (\Lambda (\frac{\pi a_n}{n})-\Lambda(\pi+ \frac{\pi}{n})\right )}{\pi}.\end{eqnarray*}
\end{proof}

We can now prove Theorem \ref{teo:easycase}.
For the sake of self-containedness  
we start by sketching the proof of the following proved in \cite{Co} for the skein normalization of the tetrahedron:
\begin{theorem}\label{teo:tetcase}
Let $\Ga$ be the $1$-skeleton of a hyperideal tetrahedron $Tet$ whose exterior dihedral angles are as in Example \ref{ex:tetrahedron} and let $(\c_n)_{n\in \mathbb{N}}$ be a sequence of colorings on $\Ga$ such that $\c_n\in [\frac{n}{2},n)$ and $\lim_{n\to \infty} 2\pi(1-\frac{c_n}{n})$ equals the corresponding exterior dihedral angles of $Tet$. Then,
$$\lim_{n\to \infty}\frac{\pi}{n}\log\big(| \ev_n \brunit{\Ga}{\c_n}|\big)=\vol(Tet^{\mathrm{trunc}}).$$
and Conjecture \ref{conj:vc} holds in this case.
 \end{theorem}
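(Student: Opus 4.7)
\smallskip
\emph{Proof sketch.} The strategy is to start from the closed form for $\langle\Gamma,c_n\rangle^{\mathrm{U}}$ supplied by \eqref{eq:tet}, apply Lemma \ref{lem:factasymp} to the evaluation at $A=\exp(\I\pi/(2n))$ of each quantum factorial that appears, recognize the resulting $k$-indexed alternating sum as a discrete oscillatory sum whose exponential growth rate is controlled by the Murakami--Yano potential $U(z)$ from \eqref{eq:UMuYa}--\eqref{eq:imaginaryu}, and then conclude with a saddle-point analysis that reproduces Ushijima's volume formula (Theorem \ref{teo:muya}).

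First I would apply Lemma \ref{lem:factasymp} to each factorial $\{m\}!$ that appears in \eqref{eq:tet}. The four prefactors $\Delta(\cdot,\cdot,\cdot)$ are products of square roots of such factorials whose integer arguments, after division by $n$, converge to well-defined limits in $(0,2)$; their evaluations therefore have modulus $\exp(-n\Lambda(\pi \mu_{\infty})/\pi+o(n))$, where $\mu_\infty = \lim m/n$. Writing $s=k/n$ inside the sum and applying the same lemma to the eight factorials $\{k+1\}!,\,\{k-T_i\}!,\,\{Q_j-k\}!$ (extending the lemma to $m/n>2$ by the $\pi$-periodicity of $\Lambda$ and keeping track of the resulting zeros of $\ev_n$), the modulus of the $k$-th summand, multiplied by the $\Delta$-prefactors, takes the form $\exp(n\,\varphi(s)/\pi+O(\log n))$ for an explicit function $\varphi(s)$ built from Lobachevsky values at arguments linear in $s$ and in the limit dihedral angles. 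A direct term-by-term comparison between $\varphi$ and \eqref{eq:imaginaryu}, under the identifications $\exp(\I s)\leftrightarrow z$ and $\alpha=2\pi(1-\lim a_n/n)$ etc., will show that $\varphi(s)=\Im U(\exp(\I s))$ up to a constant contribution absorbed by the $\Delta$- and $\log$-terms that build $V(z)$ from $U(z)$.

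Since the sum has $O(n)$ terms, the elementary estimate $|{\rm sum}|\le (\#{\rm terms})\cdot\max_k|{\rm term}_k|$ combined with $\max_s \Im U(\exp(\I s))=\vol(Tet^{\mathrm{trunc}})$ (which follows from \eqref{eq:formulazpm} and Theorem \ref{teo:muya}) immediately yields the upper bound $\limsup_n \frac{\pi}{n}\log|\ev_n\langle\Gamma,c_n\rangle^{\mathrm{U}}|\le\vol(Tet^{\mathrm{trunc}})$. The main obstacle will be the matching lower bound, because the alternating sign $(-1)^k$ together with the overall phase factors $\I^{2(a+b+c+d+e+f)}$ and the phases produced by Lemma \ref{lem:factasymp} could in principle cause destructive cancellation at the dominant index $k\approx n s_\ast$. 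To rule this out I would rewrite the alternating sum as a contour integral via a Barnes-type representation, replacing quantum factorials by appropriately normalized $\Gamma$-functions, and deform the contour in the direction of steepest descent through the two critical points $z_\pm$ of $U$ given by \eqref{eq:formulazpm}. The pair of stationary-phase contributions will combine, with the phases predicted by Lemma \ref{lem:factasymp}, into $\Im\bigl((U(z_-)-U(z_+))/2\bigr)$, which equals $\vol(Tet^{\mathrm{trunc}})$ by Theorem \ref{teo:muya}. Finally, Conjecture \ref{conj:vc} in this case will follow at once, since the prescribed sequence $c_n=\lfloor n\overline{\gamma}\rfloor$ fits the hypotheses of the theorem.
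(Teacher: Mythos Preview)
Your overall plan (apply Lemma \ref{lem:factasymp}, identify the exponential rate of each summand with $\Im U(\exp(\I s))$ from \eqref{eq:imaginaryu}, then match with Theorem \ref{teo:muya}) is exactly the paper's strategy, and your upper bound is the same. The substantive divergence is in the lower bound.

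You treat the possible cancellation coming from the sign $(-1)^k$ and the phases produced by Lemma \ref{lem:factasymp} as a genuine obstacle, and propose to resolve it by a Barnes-type contour integral and steepest descent through $z_\pm$. The paper bypasses this entirely with an elementary observation you missed: after evaluating at $A=\exp(\I\pi/(2n))$ (and restricting to those $k$ for which the summand has only a simple zero, which one checks via the Bao--Bonahon inequalities), the ratio of two consecutive summands
\[
\frac{S_{k+1}}{S_k}=-\frac{\{k+1\}\,\{Q_1-k\}\{Q_2-k\}\{Q_3-k\}}{\{k+1-T_1\}\{k+1-T_2\}\{k+1-T_3\}\{k+1-T_4\}}
\]
is a \emph{positive real number}, because each $\{m\}$ is $2\I\sin(\pi m/n)$, and the signs of the sines and the leading $(-1)$ conspire correctly under the angle inequalities of Example \ref{ex:tetrahedron}. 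Hence all contributing $\ev_n(S_k)$ share a common phase, there is no cancellation whatsoever, and $\max_k|\ev_n(S_k)|\le|\ev_n(\sum_k S_k)|\le(\#\text{terms})\max_k|\ev_n(S_k)|$ gives both bounds at once. The maximization of $f(s)=\Im U(\exp(\I s))$ then reduces to $f'(s)=0$, which is precisely the quadratic in $z=\exp(\I s)$ solved by $z_\pm$ in \eqref{eq:formulazpm}.

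So your proposal is not wrong in spirit, but the saddle-point route you sketch is both unnecessary and, as stated, a real gap: making a Barnes integral rigorous here (contour deformation, control of subleading contributions, matching phases of the two saddles) is substantial work that you have not done, whereas the constant-sign observation makes the lower bound a one-line estimate. If you add the ratio computation above, your argument becomes essentially the paper's.
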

\begin{proof}
The idea of the proof is to first identify the leading term in \eqref{eq:tet}, then to recognize it as the volume of the tetrahedron using Theorem \ref{teo:muya}. Despite the signs present in \eqref{eq:tet}, there will be no cancellation.
Let us start by computing the evaluation at $A=\exp(\frac{\I\pi}{2n})$ of $\Delta(a,b,c)$ (as in \eqref{eq:tet}) using Lemma \ref{lem:factasymp}:
\begin{align*}
\lim_{n\to \infty}\frac{\pi}{n}\log(|\ev_n(\sqrt{\Delta(a,b,c)})|)=\frac{1}{2}\left ( \Lambda(\frac{\alpha+\beta-\gamma}{2})+ \right . \\
\left . \Lambda(\frac{\alpha-\beta+\gamma}{2})+\Lambda(\frac{-\alpha+\beta+\gamma}{2})-\Lambda(\frac{\alpha+\beta+\gamma}{2}) \right )
\end{align*}
and similarly for $\Delta(a,e,f),\Delta(d,b,f),\Delta(d,e,c)$.
Indeed for instance we have $\lim_{n\to \infty} \frac{a_n}{n}=1-\frac{\alpha}{2\pi}$ (and similarly for the other ratios) and so $$\lim_{n\to \infty} \frac{\pi}{n} \log(\ev_n([a_n+b_c-c_n]!))=-\Lambda(\pi-\frac{\alpha+\beta-\gamma}{2})=\Lambda(\frac{\alpha+\beta-\gamma}{2}).$$
Observe that the above formula equals the imaginary part of Equation \eqref{eq:deltaMuYa} (recall that for all $\theta\in (0,2\pi)$ one has $\Im(\frac{\mathrm{Li}_2( \exp{\I\theta})}{2})=\Lambda(\frac{\theta}{2})$).

Now let us concentrate on the summation in Formula \eqref{eq:tet}. Let $S_k$ be the $k^{th}$ summand and remark that $k$ ranges in $[{\rm max}\{T_i\}, {\rm min}\{Q_j\}]$. We claim that $S_k$ has a simple zero at $A=\exp\left(\frac{\I\pi}{2n}\right)$ if $k\in [{\rm max}\{T_i\},n-2]$ and a zero of higher order otherwise, so only the summands $S_k$ with $k\in [{\rm max}\{T_i\},n-2]$ need be considered for the purpose of computing $\ev_n$. 

Indeed by the inequalities of Example \ref{ex:tet} translated in terms of $\gamma\sim 2\pi (1-\frac{\c_n}{n})$, for $n$ large enough, using the notation of Definition \ref{ex:tet}, one has $n<\max(\{T_i\})< 2n$; moreover by hypothesis $\c(e_i)\in [\frac{n}{2},n)$ for every $e_i\in E(\Ga)$ so $\c(e_i)+\c(e_j)-\c(e_k)< n$ for every $3$-uple of edges touching a common vertex (in whatever order). Since the summation index $k$ ranges from $\max(\{T_i\})$ to $\min(\{Q_j\})$ the differences $k-T_i$ and $Q_j-k$ are bounded above by a term of the form $\c(e_i)+\c(e_j)-\c(e_k)< n$ (for some triple of edges sharing a vertex) and hence 
the quantum factorials in the denominator of the quantum binomials forming the summands in Formula \eqref{eq:tet} have arguments $<n$ and are nonzero at $A=\exp\left(\frac{\I\pi}{2n}\right)$. By contrast, the numerator has a simple zero $\exp\left(\frac{\I\pi}{2n}\right)$ when $k$ ranges in $[\max (\{T_i\}),n-2]$ and a double zero when $k$ ranges in $[n-1,\min(\{Q_j\})]$. The claim is thus proved. 

Our second claim is now that for all $k\in [\max (\{T_i\}),n-3)$, the ratio $\ev_n\big(\frac{S_{k+1}}{S_k}\big)$ is real positive.
Indeed:
$$ \frac{S_{k+1}}{S_k}=-\frac{\{k+1\}\{Q^{(n)}_1-k\}\{Q^{(n)}_2-k\}\{Q^{(n)}_3-k\}}{\{k+1-T^{(n)}_1\}\{k+1-T^{(n)}_2\}\{k+1-T^{(n)}_2\}\{k+1-T^{(n)}_4\}}$$
where we let $Q^{(n)}_j$ and $T^{(n)}_i$ be the ``squares and triangles'' associated to the coloring $\c_n$ as in Definition \ref{ex:tet}. Since $\{k\}=2\I\sin(\frac{\pi k}{n})$, by the same estimates as in the previous claim the ratio is then positive real as $\{k+1\}$ is a negative multiple of $2\I$.  So ${\rm max}\{|S_k|\}\leq |\ev_n\big(\sum_k S_k)|\leq (n-2-{\rm max}\{T_i\}) {\rm max}\{ |S_k|\}$, and we are left to find the terms $S_k$ for which $|S_k|$ is maximal.

Now remark that since the arguments of the factorials in the denominators of $S_k$ all belong to $[0,n)$, their evaluations (by Lemma \ref{lem:factasymp}) grow respectively like $\I^{k-T_i}\exp(-\frac{n}{\pi}\Lambda(\pi \frac{k-T_i}{n}))$ and $\I^{Q_j-k}\exp(-\frac{n}{\pi}\Lambda(\pi \frac{Q_j-k}{n}))$.
By contrast the numerator grows like $$\I^{k+2}(-1)^{k+1+n}2n^2\exp\left(\frac{-\I\pi}{2n}\right)\exp\left(-\frac{\pi}{n}\Lambda\left(\pi\frac{k}{n}\right)\right),$$ so, taking into account the sign $(-1)^k$ in front of the multinomial, the $k^{th}$-summand grows like:
$$\frac{(-1)^n2n^2 \exp\left(-\frac{\I\pi}{2n}\right)}{\{1\}} \exp\left (\frac{n}{\pi}\left (-\Lambda\left(\pi\frac{k}{n}\right)+\sum_i\Lambda\left(\pi\frac{k-T_i}{n}\right)+\sum_j \Lambda\left(\pi\frac{Q_j-k}{n}\right)\right ) \right ).$$
Now let us define numbers $\tau_i, \nu_j$ by 
$$\lim_{n\to \infty}\pi \frac{T^{(n)}_i}{n}=3\pi-\frac{\ga(e)+\ga(e')+\ga(e'')}{2}
=:3\pi-\frac{\tau_i}{2}$$ 
$$\lim_{n\to \infty}\pi \frac{Q^{(n)}_j}{n}=4\pi-\frac{\ga(e)+\ga(e')+\ga(e'')+\ga(e''')}{2}
=:4\pi -\frac{\nu_j}{2}$$
for the edges $e,e',e'',e'''$ 
naturally associated to $T_i$ or $Q_j$. 
Letting $\pi\frac{k}{n}=2\pi-\frac{s}{2}$ and $$f(s)=-\Lambda\left(2\pi-\frac{s}{2}\right)+\sum_i\Lambda\left(\frac{-s+\tau_i}{2}-\pi\right)-\sum_j \Lambda\left(-2\pi+\frac{\nu_j-s}{2}\right)$$ and comparing with Equation \eqref{eq:imaginaryu}, we see that $f(s)=\Im \left(U\left(\exp\left(\I s\right)\right)\right)$ and that the growth rate of the logarithm of the norm of the evaluation of the sum is given by: $$\max_{s\in [0,\min(\{\tau_i-2\pi\})]} \frac{n}{\pi}f(s).$$
We thus search for the maximum of $f$ and imposing $f'(s)=0$ we get an equation of the form:
$$\frac{|\sin\left(2\pi-\frac{s}{2}\right)\sin\left(\frac{\nu_1-s}{2}\right)\sin\left(\frac{\nu_2-s}{2}\right)\sin\left(\frac{\nu_3-s}{2}\right)|}{|\sin\left(\frac{\tau_1-s-2\pi}{2}\right)\sin\left(\frac{\tau_2-s-2\pi}{2}\right)\sin\left(\frac{\tau_3-s-2\pi}{2}\right)\sin\left(\frac{\tau_4-s-2\pi}{2}\right)|}=1
$$
Recalling that $\nu_j-\tau_i>0$ for all $i,j$ (see Example \ref{ex:tet}), the above equation is equivalent to:
$$\frac{\sin\left(2\pi-\frac{s}{2}\right)\sin\left(\frac{\nu_1-s}{2}\right)\sin\left(\frac{\nu_2-s}{2}\right)\sin\left(\frac{\nu_3-s}{2}\right)}{\sin\left(\frac{\tau_1-s-2\pi}{2}\right)\sin\left(\frac{\tau_2-s-2\pi}{2}\right)\sin\left(\frac{\tau_3-s-2\pi}{2}\right)\sin\left(\frac{\tau_4-s-2\pi}{2}\right)}=1
$$
Replacing $\sin(x)$ by $\frac{e^{\I x}-e^{-\I x}}{2\I}$ and setting $A:=-\exp\left(-\I\alpha\right)$ (and similarly for all the angles) and $z=-\exp(\I s)$ one gets a polynomial equation of degree $2$ in $z$ whose solutions were shown by Murakami and Yano to be $z_+,z_-$ given in Formula \eqref{eq:formulazpm}. (Even if this is not apparent from the formula, as soon as the Gram matrix in Equation \eqref{eq:gram} has negative determinant, then $|z_-|=|z_+|=1$.) In particular, in the case of a regular ideal octahedron (i.e.\ a maximally truncated tetrahedron), whose exterior dihedral angles are all $\pi$, $f(s)=4\Lambda\left(\frac{s}{2}\right)+4\Lambda\left(\frac{\pi}{2}-\frac{s}{2}\right),\ s\in [0,\pi]$, whose extremum is attained at $s=\frac{\pi}{2}$ and its value is $8\Lambda\left(\frac{\pi}{4}\right)>0$.
By Theorem \ref{teo:muya} the corresponding point is then $z_-$ because $\vol (Tet)=\Im\left(V\left(z_-\right)\right)=-\Im\left(V\left(z_+\right)\right)=\Im\left(\frac{U\left(z_-\right)-U\left(z_+\right)}{2}\right)>0$.

One concludes the proof by putting together all the terms in the asymptotical behavior of Formula \eqref{eq:tet} and comparing with Theorem \ref{teo:muya}. 
\end{proof}

We are now able to prove Theorem \ref{teo:easycase} in full generality.
We do this by induction on the number $n$ of moves of the form  \raisebox{-0.1cm}{\includegraphics[width=1.0cm]{vertextotriangle.pdf}} producing $\Ga$ from a tetrahedron. If $n=0$ then $\Ga$ is the $1$-skeleton of a hyperideal hyperbolic tetrahedron whose possible angles are provided by Example \ref{ex:tet}. For such a graph, Theorem \ref{teo:tetcase} proves Theorem \ref{teo:easycase}.
If now $\Ga$ can be obtained from $\Ga'$ (for which we assume Theorem \ref{teo:easycase} holds) by a single move, suppose that $\alpha,\beta,\gamma$ are the exterior angles on the three edges of $\Ga'$ involved in the move. Observe that any geometric structure of the hyperbolic polyhedron $\Ptr_{\Ga}$ can be obtained by gluing two polyhedra $\Ptr_{\Ga'}$ and $Tet^{\mathrm{trunc}}$, along the face corresponding to the triple of edges of $\Ga'$ where the move is applied.  Conversely if $\Ptr_{\Ga'}$ and $Tet^{\mathrm{trunc}}$ are equipped with geometric structures such that the dihedral angles along the truncation faces to be matched are the same, then they can be glued to form a hyperideal hyperbolic structure on $\Ptr_{\Ga}$.
Clearly the volumes add up: $\vol(\Ptr_{\Ga})=\vol(\Ptr_{\Ga'})+\vol(Tet^{\mathrm{trunc}})$.

On the quantum side, this follows from equation \eqref{eq:triangleformula}. For any coloring $\c_n$ on $\Ga$ one has $$\brunit{\Ga}{\c_n}=\brunit{\Ga'}{\c'_n}\brunit{Tet}{\c''_n}$$
where $\c'_n$ and $\c''_n$ are the restrictions of $\c_n$ to the edges of $\Ga'$ and $Tet$ respectively (here we use the natural injections $E(\Ga')\to E(\Ga)$ and $E(Tet)\to E(\Ga)$ to restrict the $\c_n$). 
Then one concludes by induction using Theorem \ref{teo:tetcase}.\qed

\bibliography{biblio}{}

\begin{thebibliography}{10}

\bibitem{BaBo}
X.~Bao and F.~Bonahon.
\newblock Hyperideal polyhedra in hyperbolic $3$-space.
\newblock {\em Bull. Soc. math. France}, 130:457--491, 2002.

\bibitem{CoNY}
F~Costantino.
\newblock A shadow approach to the volume conjecture.
\newblock talk at ``Around the Volume Conjecture", Columbia University, 2006.

\bibitem{Co}
F~Costantino.
\newblock $6j$-symbols hyperbolic structures and the volume conjecture.
\newblock {\em Geom. Topol.}, 11:1831--1853, 2007.

\bibitem{CoGen}
F~Costantino.
\newblock Colored {J}ones invariants of links in $\#{S}^2 \times {S}^1$ and the
  volume conjecture.
\newblock {\em J. London Math. Soc.}, 76(1):1--15, 2007.

\bibitem{Coint}
F~Costantino.
\newblock Integrality of {K}auffman brackets of trivalent graphs.
\newblock {\em Quant. Topol.}, in press, 2014.

\bibitem{CoBP}
F~Costantino, N.~Geer, and B.~Patureau~Mirand.
\newblock Quantum invariants of 3-manifolds via link surgery presentations and
  non semi-simple categories.
\newblock {\em J. Topol.}, in press, 2014.
\newblock arXiv:1202.3553.

\bibitem{DGV}
T~Dimofte, Gaiotto D, and R~van~der Veen.
\newblock R{G} domain walls and hybrid triangulations.
\newblock preprint Arxiv 1304.6721, 2013.

\bibitem{GaLe}
S.~Garoufalidis and T.~Le.
\newblock The colored {J}ones function is q-holonomic.
\newblock {\em Geom. Topol.}, 9:1253--1293, 2005.

\bibitem{K}
R.~Kashaev.
\newblock The hyperbolic volume of knots from the quantum dilogarithm.
\newblock {\em Lett. Math. Phys.}, 39:269–275, 1997.

\bibitem{KT}
R.~Kashaev and O.~Tirkkonen.
\newblock A proof of the volume conjecture on the torus knots.
\newblock {\em J. Math. Sci.}, 115:2033--2036, 2003.

\bibitem{KL}
L.~Kauffman and S.~Lins.
\newblock {\em Temperley-Lieb recoupling theory and invariants of
  $3$-manifolds}.
\newblock Number 134 in Annals of Mathematics Studies. Princeton, 1994.

\bibitem{MV}
G.~Masbaum and P.~Vogel.
\newblock Three-valent graphs and the {K}auffman bracket.
\newblock {\em Pac. J. Math. Math.}, 164:361--381, 1994.

\bibitem{MM}
H.~Murakami and J.~Murakami.
\newblock The colored {J}ones polynomial and the simplicial volume of a knot.
\newblock {\em Acta Math.}, 186:85–104, 2001.

\bibitem{MM3}
H.~Murakami and J.~Murakami.
\newblock Asymptotic behaviors of the colored {J}ones polynomial of a torus
  knot.
\newblock arXiv math.GT/0405126, 2004.

\bibitem{MMOTY}
H.~Murakami, J.~Murakami, T.~Ohtsuki, T.~Takata, and Y.~Yokota.
\newblock {K}ashaev's conjecture and the {C}hern-{S}imons invariants of knots
  and links.
\newblock {\em Exper. Math.}, 11:427--435, 2002.

\bibitem{Munew}
J.~Murakami.
\newblock Generalized {K}ashaev invariants for knots in three manifolds.
\newblock {\em arXiv:1312.0330}, 2013.

\bibitem{MuYa}
J.~Murakami and M.~Yano.
\newblock On the volume of a hyperbolic and spherical tetrahedron.
\newblock {\em Comm. Anal. Geom.}, 13:379--400, 2005.

\bibitem{KR}
N.~Reshetikhin and A.~Kirillov.
\newblock Representations of the algebra ${U}_q(sl_2)$, $q$-orthogonal
  polynomials and invariants of links.
\newblock In V.~Kac, editor, {\em Infinite dimensional Lie Algebras and
  Groups}, number~7 in Advanced Ser. In Math. Phys., pages 285--339.
  Birkhauser, 1988.

\bibitem{Ri}
I.~Rivin.
\newblock A characterization of ideal polyhedra in hyperbolic $3$-space.
\newblock {\em Ann. of Math.}, 143:51--70, 1996.

\bibitem{SG}
K~Schulten and Gordon R.
\newblock Semiclassical approximations to 3j- and 6j-coefficients for
  quantum-mechanical coupling of angular momenta.
\newblock {\em J. Mathematical Phys.}, 16, 1971.

\bibitem{TaWo}
Y.~Taylor and C.~Woodward.
\newblock $6j$ symbols for ${U}_q(sl_2)$ and non-{E}uclidean tetrahedra.
\newblock {\em Selecta Math.}, 11:539--571, 2005.

\bibitem{Us}
A.~Ushijima.
\newblock A volume formula for generalized hyperbolic tetrahedra.
\newblock In {\em Non Euclidean Geometries}, number 581 in Mathematics and its
  Applications. Springer, 2006.

\bibitem{Va}
R.~van~der Veen.
\newblock The volume conjecture for {W}hitehead chains.
\newblock {\em Acta. Math. Viet.}, 33:421--431, 2006.

\bibitem{Va2}
R.~van~der Veen.
\newblock The volume conjecture for augmented knotted trivalent graphs.
\newblock {\em Alg. Geom. Topol.}, 9:691--722, 2009.

\bibitem{Zh}
H.~Zheng.
\newblock Proof of the volume conjecture for {W}hitehead doubles of a family of
  torus knots.
\newblock {\em Chin. Ann. Math. B}, 28:375--388, 2007.

\end{thebibliography}
\bibliographystyle{plain}

\end{document}